\declaretheoremstyle[notefont=\bfseries,notebraces={}{},%
    headpunct={},postheadspace=1em]{mystyle}
\declaretheorem[style=mystyle,numbered=no,name=Problem]{prob-hand}
\def\dispace{\setlength{\itemsep}{2pt}}
\newcommand\isoto{\xrightarrow{
   \,\smash{\raisebox{-0.45ex}{\ensuremath{\scriptstyle\sim}}}\,}}
\newcommand{\onto}[1]{\;{\count255=0 \loop \relbar\joinrel
    \advance\count255 by1
    \ifnum\count255<#1 \repeat \twoheadrightarrow}\;}
\newcommand{\Onto}{\mathrel  - \joinrel \twoheadrightarrow}
\def\Dir{\Rightarrow}
\def\tlu{\widetilde u}
\def\tl0{\widetilde 0}
\def\R{\mathbb R}
\def\T{\mathbb T}
\newcommand{\etype}[1]{\renewcommand{\labelenumi}{(#1{enumi})}}
\def\eroman{\etype{\roman} \dispace}
\def\ealph{\etype{\alph}\dispace}
\def\pSkip{\vskip 1.5mm \noindent}
\newcommand{\ds}[1]{\ {#1} \ }
\newcommand{\dss}[1]{\quad {#1} \quad }
\def\semiring0{semiring$^{\dagger}$}
\def\dmax{d_{\max}}
\def\sm{\setminus}
\def\00{ \{ 0 \}}
\def\o00{\overline{\00}}
\def\onto{\twoheadrightarrow}
\def\|{\ds |}
\def\vrp{\varphi}
\def\ivrp{\vrp^{-1}}
\def\Fix{{\operatorname{Fix}}}
\def\Compl{{\operatorname{Compl}}}
\def\Arch{{\operatorname{Arch}}}
\def\X1{X_1}
\def\Y1{Y_1}
\def\brV{\overline{V}}
\def\brR{\overline{R}}
\def\brS{\overline{S}}
\def\brD{\overline{D}}
\def\brC{\overline{C}}
\def\bra{\bar a}
\def\brd{\bar d}
\def\brx{\bar x}
\def\bry{\bar y}
\def\tT{\mathcal T}
\def\tG{\mathcal G}
\def\conv{\operatorname{conv}}
\def\N{\mathbb N}
\def\mfo{\mathfrak o}
\def\Ddw{D^{\downarrow}}
\def\X1{X_1}
\def\Y1{Y_1}
\def\al{\alpha}
\def\pal{\; \underset{\al}{+} \;}
\def\pal{+_\al}
\def\cal{C_\al}
\newtheorem{thm}{Theorem} [section]
\newtheorem*{thm*}{Theorem}
\newtheorem{cor}[thm]{Corollary}
\newtheorem{lem}[thm]{Lemma}
\newtheorem{prop}[thm]{Proposition}
\newtheorem{rem}[thm]{Remark}
\newtheorem*{claim*} {Claim}
\newtheorem*{theorem4.5'} {Theorem 4.5$'$}
\newtheorem{acknowledgment*}[thm] {Acknowledgment}
\newtheorem{example}[thm]{Example}
\newtheorem{examp}[thm]{Example}
\newtheorem*{exampleA*}{Example A}
\newtheorem*{exampleB*}{Example B}
 \newtheorem*{remark*}{Remark}
 \newtheorem{defn}[thm]{Definition}
\newtheorem{schol}[thm]{Scholium}
\newtheorem*{notation*} {Notation}
\newtheorem*{comment*} {Comment}
 \renewcommand{\sectionmark}[1]{}
\newcommand{\lm}{\lambda}
\newcommand{\om}{\omega}
 \newcommand{\id}{\operatorname{id}}
\newcommand{\SA}{\operatorname{SA}}
\newcommand{\EC}{\operatorname{EC}}
\newcommand{\AM}{\operatorname{AM}}
\newcommand{\Nac}{\operatorname{Nac}}
\newcommand{\Mod}{\operatorname{Mod}}
\begin{document}

\title[Amalgamation and extensions of summand absorbing modules]{Amalgamation and extensions \\[2mm] of summand absorbing modules  \\[2mm] over a semiring}
 \author[Z. Izhakian]{Zur Izhakian}
\address{Institute  of Mathematics,
 University of Aberdeen, AB24 3UE,
Aberdeen,  UK.}
    \email{zzur@abdn.ac.uk}
\author[M. Knebusch]{Manfred Knebusch}
\address{Department of Mathematics,
NWF-I Mathematik, Universit\"at Regensburg 93040 Regensburg,
Germany} \email{manfred.knebusch@mathematik.uni-regensburg.de}

\subjclass[2010]{Primary   14T05, 16D70, 16Y60 ; Secondary 06F05,
06F25, 13C10, 14N05}


\keywords{Semiring,  lacking zero sums, direct sum decomposition,
free (semi)module, projective (semi)module, indecomposable,
semidirect complement, amalgamation, extension.}




\begin{abstract} A submodule $W$ of $V$ is
summand absorbing, if  $x + y
\in W$ implies  $x \in W, \; y \in W $ for any $x, y \in V$.  Such submodules often appear in modules over (additively) idempotent semirings, particularly in tropical algebra. This paper studies amalgamation and extensions of these submodules, and more generally of upper bound modules.   \end{abstract}

\maketitle

{ \small \tableofcontents}

\numberwithin{equation}{section}
\section*{Introduction}

This paper continues the  development of  module theory
over semirings \cite{Cos,golan92}, along the lines of classical module theory. Our approach to this theory was introduced in  \cite{Dec} and has been proceeded in \cite{SA,Gen}, starting with decompositions and generations of particular modules, termed summand absorbing modules.  The present paper focuses on amalgamations and extensions of these modules.

Semirings are extensively involved in recent studies due to increasing
 interest in tropical algebra and its applications to discrete  mathematics and automata theory.
Although our original aim was to understand modules in tropical algebra,
there are many other important examples where these modules appear, e.g.,
additive semigroups, which can be viewed as
modules over the semiring $\N_0$ of natural numbers, or sets of positive elements in an ordered
ring or a semiring.

The underlying property of these modules is
 lack of zero sums:
An $R$-module $V$ over a semiring $R$ \textbf{lacks zero sums}
(abbreviated \textbf{LZS}) or $V$ is zero-sum-free \cite[p.150]{golan92},~ if
\begin{equation}\label{eq:1.27}
  \forall \, x, y \in V : \  x + y = 0 \dss\Dir  x = y = 0. \tag{LZS}
\end{equation}
\noindent  LZS is closed for taking  submodules, direct sums, direct products, and holds for modules of
 functions $\operatorname{Fun} (S, V)$ from a set $S$ to a
module~$V$ \cite[Examples~1.6]{Dec}. For example, any  module over
an idempotent semiring is LZS \cite[Proposition~1.8]{Dec}, establishing a large assortment of
examples.

The notion of LZS leads to the next related type of submodules: A submodule $W$ is
of $V$  \textbf{summand absorbing} (abbreviated \textbf{SA}) in
$V$ (termed ``strong'' in \cite[p. 154]{golan92}), if
\begin{equation}\label{eq:1.1}
\forall \, x, y \in V: \  x + y \in W \dss\Dir x \in W, \; y \in W;
\tag{SA}
\end{equation}  $W$ is then called an \textbf{SA-submodule} of
$V$. An \textbf{SA-left ideal} of a semiring $R$ is an SA-submodule of~$R$, viewed as $R$-module by left multiplication.
An $R$-module $V$ is LZS if and only if $\{ 0_V \} $ is an SA-submodule of
$V$, thereby enhancing interest in SA-submodules.
Nevertheless, the notion of SA-submodules itself retains sense for any semiring $R$ and (left) $R$-modules $V$.

 SA-submodules arise in tropical
geometry  \cite[\S1.2]{Gen}, in supertropical algebra \cite[Example~ 1.13]{Gen}, in semigroup
theory (\cite[Note ~1.6]{Gen}). 
These submodules  have
applications to monoid semirings
\cite[Theorem~2.14]{Gen} and  to matrices over semirings  \cite[\S 2.1]{Gen}. Such matrices
have many (multiplicative) idempotents, what make them applicable to linear representations of semigroups
\cite{plc,IzMr,IzhakianMerletIdentity}.

We denote the poset (= partially ordered set) of submodules of $V$ by $\Mod(V)$, and the subposet consisting of the SA-submodules (= summand absorbing) of $V$ by $\SA(V)$. More generally, given submodules $A \supset C$ of $V$,  $\Mod(A, C)$ denotes the set of all submodules $B$ of $A$ containing $C$ and  $\SA(A, C)$ (resp. $\SA_V(A, C)$) denotes the set of SA-submodules of $A$ (resp. SA-submodules of ~$V$) containing ~$C$, i.e.,
\[ \begin{array}{rcl}
\SA (A, C) & = & \SA (A) \cap \Mod (A, C), \\[1mm]
\SA_V (A, C) & = & \SA (V) \cap \Mod (A, C).
\end{array} \]

A collection of submodules $A_1, \dots, A_n $ of $V$ has  \textbf{amalgamation} $\AM_V$, if the product $A_1 \times \cdots \times A_n $ modulo an additive exchange equivalence $\EC^V_{A_1 \times \cdots \times A_n}$ injects
in $A_1 + \cdots + A_n$ (Definition~ \ref{def:1.2}). This amalgamation induces amalgamation of SA-submodules $W_i \subset A_i$, with $W_1 + \dots + W_n \in \SA(A_1 + \dots + A_n)$ (Theorems  \ref{thm:4.5} and \ref{thm:4.6}).

We proceed in \S\ref{sec:5} with an intensive study of certain submodules and their amalgamation, involving with several supporting notions, and auxiliary results. Roughly all results pertain to families of cosets $x +D$ of a given submonoid $D$ of $V$.

A $D$-\textbf{complement} of a submodule~ $W$ in $V$ is a submodule $T$, such that $W + T = V$, $W \cap  T = D$, and $(W+ T) \cap T = \emptyset$ (Definition~ \ref{def:6.1}). When $D \in \SA(W)$,  the $D$-complement of a submodule $W$ is unique (Theorem~ \ref{thm:6.5}).

A submodule $A$ is an \textbf{SA-extension} of $D \subset A$, if $D$ is summand absorbing in $A$ (Definition~ \ref{def:7.1}). A submodule $T$ is  \textbf{complementary}
to $A$ over $D$, if $D$ is a $D$-complement of $A$ in the sum $A + T$ (Definition \ref{def:7.4}).
The SA-extension $B := [(A \sm D) + T] \cup  D$ is the \textbf{saturation} of~ $A$ by the
complementary module $T$. $A$ is $T$-\textbf{saturated}, if $A = B$. Theorem \ref{thm:7.7} links these notions:
If $A$ an SA-extension of $D$, and  $T$ is complementary
to $A$ over $D$ for which $A$ is $T$-saturated, then the pair $(A,T)$ has amalgamation
in $V$.

A submodule $T \subset V$ is \textbf{subtractive}, if for any
 $t_1, t_2 \in T$ and $x \in V$ with $x + t_1 = t_2$, also $x \in T$.
Any submodule $D \subset V$ has a unique minimal subtractive module $T$,  the \textbf{subtractive hull} of $D$. Theorem \ref{thm:9.5} lays the connections to SA-extensions:
The subtractive hull $T$ of any submodule  $D \subset V$ is
complementary over $D$ to every SA-extension of $D$.

Given a submodule $D$ of an $R$-module $V$, we have a hands a $D$-\textbf{quasiordering} on $V$defined as ($x,y \in V$)
$$ x \leq_D y \dss{\Leftrightarrow} x + d = y \text{ for some } d \in D.$$
The module $V$  is called \textbf{upper bound}, if the relation $\leq_V$ is antisymmetric and so is a (partial) ordering on $V$. (``upper bound'' refers to the fact that then $x+y$ is a built in upper bound of the set $\{x,y\}$.)

Our results pertain to upper bound $R$-modules $V$ and include the study of minimality and maximality with respect to the additive  relation $\preceq_D$ for a given submodule $D$ of $V$, defined by
$$x \preceq_D y  \dss{\Leftrightarrow} y + D \subset  x + D,$$
as well as \textbf{stable sets} $X$, i.e., sets $X$ with  $X+ D \subset ~ X$.

A special attention  is dedicated to a class of bipotent additive monoids (Definition~ \ref{def:11.2}) which can be  characterized in terms of contraction maps and convexity (Theorem~ \ref{thm:11.7}).

The paper ends with a construction of a hierarchy of families of summand absorbing submonoids
(so called \textbf{archimedean classes}, cf. Definition \ref{def:12.6}) in a suitably prepared additive monoid $V$ (Theorem \ref{thm:13.8})), which can be associated to any given additive monoid~ $V_0$ -- a  kind of ``resolution'' of $V_0$ (cf. \S\ref{sec:4}).

\section{Exchange equivalence and amalgamation}\label{sec:1}

Given a pair $(A_1, A_2)$ of submodules of $V$, we look for an equivalence relation $\sim$ on the set $A_1 \times A_2$ with the following two properties.
\begin{itemize}\dispace
\item[(1)] $(d,0) \sim (0, d)$ for all $d \in A_1\cap A_2$.
\item[(2)] The relation $\sim$ is \textbf{additive}, i.e., for pairs $(a_1, a_2)$, $(a'_1, a'_2)$, $(b_1, b_2)$, $(b'_1, b'_2)$ in $A_1 \times A_2$ with
\[ (a_1, a_2) \sim (a'_1, a'_2), \quad (b_1, b_2) \sim (b'_1, b'_2), \]
also
\[ (a_1 + b_1, a_2 + b_2) \sim (a'_1 + b'_1, a'_2 + b'_2). \]
\end{itemize}
We state an immediate consequence of these two properties.

\begin{itemize}
\item[(3)] Given $a_1, a'_1 \in A_1$, $a_2, a'_2 \in A_2$, $d \in A_1 \cap A_2$, the following holds.

\begin{itemize} \dispace
\item[3.a)] $a_1 = a'_1 + d \ds \Rightarrow (a_1, a_2) \sim (a'_1, a_2 + d)$,
\item[3.b)] $a_2 = a'_2 + d \ds \Rightarrow (a_1, a_2) \sim (a_1 + d, a'_2)$.
\end{itemize}

\end{itemize}
Property 3) leads to an explicit construction of such an equivalence relation on $A_1 \times A_2$, named ``exchange equivalence''.

We consider finite sequences of pairs $(a_1, a_2), (a'_1, a'_2), \dots, (a_1^{(k)}, a_2^{(k)})$ in $A_1 \times A_2$ with constant sum $a_1 + a_2 = a'_1 + a'_2 = \dots = a_1^{(k)} + a_2^{(k)}$. First we deal with such sequences of length $k = 2$.

\begin{defn}\label{def:1.1} Given $d \in A_1 \cap A_2$ a \textbf{(1,2)-exchange of $d$} (in $V$) is a sequence $(a_1, a_2)$, $(a'_1, a'_2)$ in $A_1 \times A_2$ with
\[ a_1 = a'_1 + d \; , \; a'_2 = a_2 + d, \]
while a (2,1)-exchange of $d$ is such a sequence with
\[ a'_1 = a_1 + d \; , \; a_2 = a'_2 + d. \]
We denote these  such exchanges symbolically by
\[ (a_1, a_2) \xrightarrow[ \quad (1,2) \quad]{d} (a'_1, a'_2), \quad (a_1', a_2') \xrightarrow[\quad(2,1)\quad]{d} (a_1, a_2), \]
respectively.
\end{defn}

We name these processes \textbf{basic exchanges}. They are very simple. In the case of a (1,2)-exchange we split off from $a_1$ a summand $d \in A_1 \cap A_2$ (in $A_1$) and add it to $a_2$ (in $A_2$).

\begin{defn}\label{def:1.2} $ $
\begin{enumerate} \ealph
  \item[a)] We call two pairs $(a_1, a_2)$, $(b_1, b_2)$ in $A_1 \times A_2$ \textbf{exchange equivalent} (in $V$) if there exists a finite sequence $(a_{10}, a_{20})$, $(a_{11}, a_{21})$, $ \dots, (a_{1k}, a_{2k})$ in $A_1 \times A_2$ starting with $(a_1, a_2) = (a_{10}, a_{20})$ and ending with $(b_1, b_2) = (a_{1k}, a_{2k})$, in which any two consecutive members $(a_{1, i - 1}, a_{2, i - 1})$, $(a_{1, i}, a_{2, i})$, are either a (1,2)-exchange or a (2,1)-exchange of some $d_i \in A_1 \cap A_2$. We then write
\[ (a_1, a_2)  \underset{A_1 \times A_2}{\sim}  (b_1, b_2). \]
and we call such sequences \textbf{chains of basic exchanges}.

  \item[b)] It is obvious that in this way we obtain an equivalence relation on the set $A_1 \times A_2$, which we name ``\textbf{exchange equivalence}'' (in $V$), and denote by $\EC_{A_1 \times A_2}$, or more elaborately by $\EC_{A_1 \times A_2}^V$.
\end{enumerate}
\end{defn}
Clearly $\EC_{A_1 \times A_2}$ has the properties (1) and (3) from above. Given two (1,2)-exchanges
\[ (a_1, a_2) \xrightarrow[\ (1,2) \ ]{d} (a'_1, a'_2) \; \mbox{ and } \; (b_1, b_2) \xrightarrow[\ (1,2) \ ]{e} (b'_1. b'_2) \]
it is plain that
\[ (a_1 + b_1, a_2 + b_2) \xrightarrow[\ (1,2) \ ]{d+e} (a'_1 + b'_1, a'_2 + b'_2). \]
The same holds for (2,1)-exchanges. It follows by an easy argument, which we defer to \S 2, that $\EC_{A_1 \times A_2}$ is additive, i.e. has the property (2) from above. Moreover $\EC_{A_1 \times A_s}$ respects scalar multiplication, i.e., for any $\lambda \in R$
\[ (a_1, a_2) \underset{A_1 \times A_2}{\sim} (a'_1, a'_2) \dss \Rightarrow (\lambda a_1, \lambda a_2) \underset{A_1 \times A_2}{\sim} (\lambda a'_1, \lambda a'_2). \]
This holds since when, say, $(a_1, a_2) \xrightarrow[\; 1,2 \;]{d} (a'_1, a'_2)$, then $(\lambda a_1, \lambda a_2) \xrightarrow[\; 1,2 \;]{\lambda d} (\lambda a'_1, \lambda a'_2)$.
Summarizing these observations we obtain

\begin{prop}\label{pro:1.3}  The exchange equivalence relation $\EC_{A_1 \times A_2}$ on the $R$-module $A_1 \times A_2$ is $R$-linear. It is the finest additive equivalence relation on $A_1 \times A_2$ with $(d, 0) \sim (0, d)$ for every $d \in A_1 \cap A_2$. \end{prop}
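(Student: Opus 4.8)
The plan is to verify three assertions in turn: that $\EC_{A_1\times A_2}$ is $R$-linear (i.e.\ compatible with addition and scalar multiplication on $A_1\times A_2$), that it satisfies $(d,0)\sim(0,d)$ for $d\in A_1\cap A_2$, and that it is the \emph{finest} such additive equivalence relation. The first and second are essentially recorded in the discussion preceding the statement; the substance is the minimality (finest) claim, so that is where I would concentrate.

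First I would dispose of the additivity and scalar-compatibility. A chain of basic exchanges realizing $(a_1,a_2)\sim(b_1,b_2)$ can be added termwise to a chain realizing $(a_1',a_2')\sim(b_1',b_2')$: if the first has length $k$ and the second length $\ell$, pad the shorter one with trivial (e.g.\ $(1,2)$-exchanges of $d=0$) so both have the same length, then use the observation already noted that the termwise sum of two $(1,2)$-exchanges of $d$ and $e$ is a $(1,2)$-exchange of $d+e$ (and likewise for $(2,1)$-exchanges), with $d+e\in A_1\cap A_2$ since $A_1\cap A_2$ is a submodule. This yields a chain from $(a_1+a_1',a_2+a_2')$ to $(b_1+b_1',b_2+b_2')$, so the relation is additive; scaling by $\lambda\in R$ is handled by the single-step remark $(\lambda a_1,\lambda a_2)\xrightarrow[\,1,2\,]{\lambda d}(\lambda a_1',\lambda a_2')$ applied along a chain, using $\lambda d\in A_1\cap A_2$. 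Property~(1) is immediate: $(d,0)\xrightarrow[\,(1,2)\,]{d}(0,d)$ is itself a basic exchange.

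For the minimality, let $\approx$ be any additive equivalence relation on $A_1\times A_2$ with $(d,0)\approx(0,d)$ for all $d\in A_1\cap A_2$; I must show $(a_1,a_2)\sim(b_1,b_2)$ implies $(a_1,a_2)\approx(b_1,b_2)$. Since $\sim$ is generated by single basic exchanges and $\approx$ is an equivalence relation (hence transitive), it suffices to treat one basic exchange. Suppose $(a_1,a_2)\xrightarrow[\,(1,2)\,]{d}(a_1',a_2')$, so $a_1=a_1'+d$ and $a_2'=a_2+d$ with $d\in A_1\cap A_2$. From $(d,0)\approx(0,d)$ and reflexivity $(a_1',a_2)\approx(a_1',a_2)$, additivity gives $(a_1'+d,\,a_2+0)\approx(a_1'+0,\,a_2+d)$, that is $(a_1,a_2)\approx(a_1',a_2')$. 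The $(2,1)$-exchange case is symmetric (or follows by symmetry of $\approx$). Chaining these along the sequence defining $(a_1,a_2)\sim(b_1,b_2)$ and using transitivity of $\approx$ completes the argument. The main obstacle — really the only delicate point — is the bookkeeping in the additivity proof when the two chains have unequal length; inserting trivial basic exchanges (of $d=0$) to equalize lengths is the clean fix, and once that is in place everything else is a direct unwinding of the definitions.
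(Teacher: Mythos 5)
Your treatment of the finest-relation claim and of scalar compatibility is correct and is essentially the paper's own reasoning (the minimality argument is just the derivation of property (3) from properties (1) and (2) at the start of \S\ref{sec:1}, applied step by step along a chain). The gap is in the additivity step. Padding the shorter chain with trivial exchanges so that both chains have the same length is \emph{not} enough: after padding, the $i$-th step of the first chain may be a $(1,2)$-exchange of some $d\neq 0$ while the $i$-th step of the second chain is a $(2,1)$-exchange of some $e\neq 0$, and the termwise sum of such a pair is not a basic exchange of either type. Concretely, if $a_1=a_1'+d$, $a_2'=a_2+d$ and $b_1'=b_1+e$, $b_2=b_2'+e$, then the summed step goes from $(a_1'+d+b_1,\; a_2+b_2'+e)$ to $(a_1'+b_1+e,\; a_2+d+b_2')$, which does not fit Definition~\ref{def:1.1} unless $d=0$ or $e=0$; so the object you produce is not, as claimed, a chain of basic exchanges.

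This is exactly the point the paper's deferred argument (Proposition~\ref{prop:2.1}) is designed to handle: both chains are first brought into \emph{normalized zig-zag} form --- consecutive exchanges of the same type are merged, trivial exchanges are inserted so that the types strictly alternate, starting with a $(1,2)$- and ending with a $(2,1)$-exchange, and the lengths are then equalized --- after which corresponding steps automatically have the same type and termwise addition does yield a chain of basic exchanges. Your argument can be repaired either this way, or more cheaply: note that a mixed-type summed step can be refined into two consecutive basic exchanges (first move $d$ from slot $1$ to slot $2$, then move $e$ from slot $2$ to slot $1$); or avoid termwise addition entirely by translating, i.e.\ add the constant pair $(u_1,u_2)$ to every member of the first chain (each step remains a basic exchange of the same type), then add $(b_1,b_2)$ to every member of the second chain, and concatenate the two resulting chains. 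Any of these closes the gap; as written, however, the key claim ``this yields a chain'' is false in general.
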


We denote the $\EC_{A_1 \times A_2}$-class of a pair $(a_1, a_2) \in A_1 \times A_2$ by $[a_1, a_2]_{A_1 \times A_2}$, or $[a_1, a_2]$ for short, and the set of all the classes by $A_1 \infty_V A_2$. In consequence of Proposition~1.3 we have an obvious structure of an $R$-module on $A_1 \infty_V A_2$ by defining
\begin{equation}\label{eq:1.1}
\begin{array}{rl}
 [a_1, a_2]_{A_1 \times A_2} + [b_1, b_2]_{A_1 \times A_2} & = [a_1 + b_1, a_2 + b_2]_{A_1 \times A_2} ,  \\[1mm]
 \lambda \cdot [a_1, a_2]_{A_1 \times A_2} & = [\lambda a_1, \lambda a_2]_{A_1 \times A_2},
\end{array}
 \end{equation}
where $a_1, b_1 \in A_1, a_2, b_2 \in A_2, \lambda \in R.$

\begin{defn}\label{def:1.$}  We call the $R$-module
\[ A_1 \infty_V A_2 = A_1 \times A_2/\EC_{A_1 \times A_2} \]
the \textbf{amalgamation} of $A_1$ and $A_2$ (in $V$).
\end{defn}

We furthermore have a well defined surjective $R$-module
homomorphism
$$\pi = \pi_{A_1, A_2} : A_1 \infty_V A_2 \Onto  A_1 +
A_2 \subset V$$ mapping $[a_1, a_2]$ to $a_1 + a_2$, and obtain a
natural commuting square
    \begin{equation}\label{eq:1.3}
    \begin{array}{c}
    \xymatrix{
    A_1     \ar@{->}[rr]^{j_1} & & A_1 \infty_V A_2 \\
   A_1 \cap A_2 \ar@{->}[rr]^{i_2}  \ar@{^{(}->}[u]^{i_1} &  &  A_2 \ar@{^{(}->}[u]^{j_2}
   }
    \end{array}
  \end{equation}
of $R$-module homomorphisms. Here $i_1$ and $i_2$ are the inclusion homomorphisms of $A_1 \cap A_2$ in  $A_1$ and $A_2$, and $j_1, j_2$ are given by
\begin{equation}\label{eq:1.4}
 j_1 (a_1) = [a_1, 0 ], \quad  j_2 (a_2) = [0, a_2]. \end{equation}
Clearly $\pi_1 j_1 = \id_{A_1}$, and $\pi_2 j_2 = \id_{A_2}$. Thus $A_1$ and $A_2$ embed via $j_1$ and $j_2$ into $A_1 \infty_V A_2$. In all the following we identify $A_k$ with $j_k (A_k)$ $(k = 1,2)$ and so \textit{regard} $A_1$ \textit{and} $A_2$ \textit{also as submodules of} $A_1 \infty_V A_2$. We have $A_1 + A_2 = A_1 \infty_V A_2$ and the four modules $A_1, A_2, A_1 \cap A_2$, $A_1 \infty_V A_2$ always constitute  a pushout diagram in the category of $R$-modules via there inclusion homomorphisms in $A_1 \infty_V A_2$.

\begin{defn}\label{def:1.5}  We say that a pair $(A_1, A_2)$ of submodules of $V$ \textbf{has amalgamation} (in~$V$), abbreviated $\AM_V$ or $\AM$ for short, if the map $$\pi_{A_1, A_2} : A_1 \infty_V A_2 \longrightarrow A_1 + A_2 \subset V$$ is injective, hence bijective.
\end{defn}
 This means that the four $R$-submodules $A_1, A_2$, $A_1 \cap A_2$, $A_1 + A_2$ of ~$V$ constitute a pushout diagram. In explicit terms $(A_1, A_2)$ has $\AM_V$ if for any pairs $(a_1, a_2)$, $(b_1, b_2)$ in $A_1 \times A_2$
\begin{equation}\label{eq:1.5}
 (a_1, a_2) \underset{A_1 \times A_2}{\sim} (b_1, b_2) \dss \Leftrightarrow a_1 + a_2 = b_1 + b_2. \end{equation}
If $(A'_1, A'_2)$ is a second pair of $R$-submodules of $V$ with $A'_1 \subset A_1$, $A'_2 \subset A_2$, then it is immediate from the pushout properties of $A'_1 \infty_V A'_2$ and $A_1 \infty_V A_2$ that there is a unique $R$-module homomorphism
\begin{equation}\label{eq:1.6}
 \kappa_{A'_1 \times A'_2, A_1 \times A_2} : A'_1 \infty_V A'_2 \longrightarrow A_1 \infty_V A_2 \end{equation}
with
\begin{equation}\label{eq:1.7}
 \pi_{A_1, A_2} \circ \kappa_{A'_1 \times A'_2, A_1 \times A_2} = \pi_{A'_1, A'_2}. \end{equation}
It sends an element $[a_1, a_2]_{A'_1 \times A'_2}$ to $[a_1, a_2]_{A_1 \times A_2}$. In other terms, for any $(a_1, a_2)$, $(b_1, b_2)$ in $A_1 \times A_2$
\begin{equation}\label{eq:1.8}
 (a_1, a_2) \underset{A'_1 \times A_2'}{\sim} (b_1, b_2) \dss\Rightarrow (a_1, a_2) \underset{A_1 \times A_2}{\sim} (b_1, b_2). \end{equation}
This is also immediate from our explicit description of the exchange equivalence relation.

The map (1.6) is injective iff the restriction $\EC_{A_1 \times A_2} \vert A'_1 \times A'_2$ of $\EC_{A_1 \times A_2}$ to the subset $A'_1 \times A'_2$ of $A_1 \times A_2$ coincides with $\EC_{A'_1 \times A'_2}$. We then write
\[ A'_1 \infty_V A'_2 \subset A_1 \infty_V A_2, \]
regarding $A'_1 \infty_V A'_2$ as a submodule of $A_1 \infty_V A_2$.

\begin{prop}\label{prop:1.6.} When $(A'_1, A'_2)$ and $(A_1, A_2)$ are pairs of $R$-submodules of $V$
with $A'_1 \subset A_1$, $A'_2 \subset A_2$, and $(A_1, A_2)$ has
$\AM_V$, then $(A'_1, A'_2)$ also has $\AM_V$ iff $E_{A_1 \times
A_2} \vert A'_1 \times A'_2$ coincides with $E_{A'_1 \times A'_2}$.
\end{prop}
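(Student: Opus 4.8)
The plan is to read the statement off from the factorization \eqref{eq:1.7} of $\pi_{A'_1, A'_2}$ through $\pi_{A_1, A_2}$, combined with the criterion, noted just above the statement, for injectivity of the comparison map $\kappa := \kappa_{A'_1 \times A'_2,\, A_1 \times A_2}$ of \eqref{eq:1.6}.

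First I would recall, from Definition~\ref{def:1.5}, that $(A'_1, A'_2)$ has $\AM_V$ exactly when $\pi_{A'_1, A'_2} \colon A'_1 \infty_V A'_2 \to A'_1 + A'_2$ is injective, and likewise that the hypothesis on $(A_1, A_2)$ says $\pi_{A_1, A_2}$ is injective (hence bijective onto $A_1 + A_2$). Then I would use \eqref{eq:1.7}, namely $\pi_{A'_1, A'_2} = \pi_{A_1, A_2} \circ \kappa$, together with the elementary fact that, for a composite $f \circ g$ of maps with $f$ injective, $f \circ g$ is injective if and only if $g$ is: the forward implication is immediate, and conversely $g(x) = g(y)$ gives $f(g(x)) = f(g(y))$, whence $x = y$ once $f \circ g$ is injective. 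Applying this with $f = \pi_{A_1, A_2}$ and $g = \kappa$ yields the equivalence: $(A'_1, A'_2)$ has $\AM_V$ if and only if $\kappa$ is injective.

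Finally I would invoke the observation made immediately before the proposition: $\kappa$ is injective if and only if $\EC_{A_1 \times A_2} \vert A'_1 \times A'_2$ coincides with $\EC_{A'_1 \times A'_2}$. Here one refinement, that $\EC_{A'_1 \times A'_2}$ is contained in $\EC_{A_1 \times A_2} \vert A'_1 \times A'_2$, is automatic by \eqref{eq:1.8}, so that "coincides" amounts to the reverse containment; and this reverse containment is precisely the injectivity of $\kappa$, since $\kappa$ fails to be injective exactly when two $\EC_{A'_1 \times A'_2}$-inequivalent pairs in $A'_1 \times A'_2$ become $\EC_{A_1 \times A_2}$-equivalent. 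Chaining the two equivalences delivers the proposition.

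I do not anticipate a genuine obstacle: everything is already packaged in the pushout/factorization machinery set up before the statement. The only mildly delicate points are the "left cancellation" of the injective map $\pi_{A_1, A_2}$ from the composite $\pi_{A_1,A_2} \circ \kappa$, and keeping track of the fact that one direction of the equality of equivalence relations is free (via \eqref{eq:1.8}), so that the content of the condition resides entirely in the other direction.
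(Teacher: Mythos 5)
Your proof is correct and follows essentially the same route as the paper's: factor $\pi_{A'_1,A'_2}$ through $\kappa$ via \eqref{eq:1.7}, use injectivity of $\pi_{A_1,A_2}$ to reduce $\AM_V$ for $(A'_1,A'_2)$ to injectivity of $\kappa$, and then appeal to the observation made just before the statement that injectivity of $\kappa$ is exactly the coincidence of $\EC_{A_1\times A_2}\vert A'_1\times A'_2$ with $\EC_{A'_1\times A'_2}$. The paper phrases this via the commuting square, but the content is identical.
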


\begin{proof} We have a commuting square
    \begin{equation*}\label{eq:1.3}
     \begin{array}{c}
    \xymatrix{
A'_1 \infty_V A'_2  \ar@{->}[d]^{\pi'}     \ar@{->}[rr]^{\kappa} & & A_1 \infty_V A_2 \ar@{->}[d]^{\pi} \\
   A'_1 + A'_2 \ar@{^{(}->}[rr]   &  &  A_1 + A_2
   }\end{array}
  \end{equation*}
with $\pi' = \pi_{A'_1, A'_2}$, $\pi = \pi_{A_1, A_2}$, $\kappa = \kappa_{A'_1 \times A'_2, A_1 \times A_2}$, and the inclusion map of the submodule $A'_1 + A'_2$ of $V$ into $A_1 \times A_2$. By assumption $\pi$ is injective. Then $\pi'$ is injective iff $\kappa$ is injective. \end{proof}

We state some obvious facts about exchange equivalence.

\begin{rem}\label{rem:1.7} Assume that $(A_1, A_2)$ is a pair of submodules of $V$ and $a_1, b_1 \in A_1$, $a_2, b_2 \in A_2$.
\begin{itemize}\dispace
\item[i)] $(a_1, a_2) \underset{A_1 \times A_2}{\sim} (b_1, b_2) \dss \Leftrightarrow (a_2, a_1) \underset{A_2 \times A_1}{\sim} (b_2, b_1)$.\\
Thus $A_1 \infty_V A_2 = A_2 \infty_V A_1$. The pair $(A_1, A_2)$ has $\AM_V$ iff $(A_2, A_1)$ has $\AM_V$.
\item[ii)] If $A_1 \supset A_2$, then $(a_1, a_2) \underset{A_1 \times A_2}{\sim} (a_1 + a_2, 0)$, and so $(A_1, A_2)$ has $\AM_V$.
\item[iii)] Let $\varphi : V \to V'$ be an $R$-module homomorphism. Then
\[ (a_1, a_2) \underset{A_1 \times A_2}{\sim} (b_1, b_2) \dss\Rightarrow (\varphi (a_1), \varphi (a_2)) \underset{\varphi (A_1) \times \varphi (A_2)}{\sim} (\varphi (b_1), \varphi (b_2)). \]
\end{itemize}
\end{rem}

\section{Additivity of exchange equivalences}\label{sec:2}

We verify the additivity of $\EC_{A_1 \times A_2}$ asserted in \S 1 by using ``normalized'' chains of exchanges.

If a chain of basic exchanges in $A_1 \times A_2$ is given, connecting $(a_1, a_2)$ to $(b_1, b_2)$, we can simplify this chain in various ways. First note that if, say
\[ (a_1, a_2) \xrightarrow[\quad 1,2 \quad]{d} (a'_1, a'_2), \quad (a'_1, a'_2) \xrightarrow[\quad 1,2 \quad]{e} (a''_1, a''_2), \]
then $(a_1, a_2) \xrightarrow[\ 1,2 \ ]{d+e} (a''_1, a''_2)$. Thus we can achieve that in the chain from $(a_1, a_2)$ to $(b_1, b_2)$ the basic exchanges alternate between type $(1,2)$ and $(2,1)$. We call such a chain a \textbf{zig-zag}.

Furthermore, we have the trivial basic exchange $(a_1, a_2) \xrightarrow[\ 1,2 \ ]{0} (a_1, a_2)$ which coincides with $(a_1, a_2) \xrightarrow[\ 2,1 \ ]{0} (a_1, a_2)$. By employing trivial basic exchanges, we can achieve that the zig-zag from $(a_1, a_2)$ to $(b_1, b_2)$ also starts with a basic exchange of type $(1,2)$ and ends with one of type $(2,1)$. We call such a chain of exchanges a \textbf{normalized zig-zag} (an admittedly ad hoc notion). Finally we can increase the length of the normalized zig-zag by 1, adding a trivial basic exchange, or by 2, adding a chain $(a_1, a_2) \xrightarrow[\ 1,2 \ ]{d} (a'_1, a'_2) \xrightarrow[\ 2,1 \ ]{d} (a_1, a_2)$ of length 2 with $d \in A_1 \cap A_2$.

\begin{prop}\label{prop:2.1} Assume that $(a_1, a_2)$, $(b_1, b_2)$, $(u_1, u_2)$, $(w_1, w_2)$ are pairs in $A_1 \times A_2$ with
\[ (a_1, a_2) \underset{A_1 \times A_2}{\sim} (b_1, b_2), \quad (u_1, u_2) \underset{A_1 \times A_2}{\sim} (w_1, w_2).\]
Then $(a_1 + u_1, a_2 + u_2) \underset{A_1 \times A_2}{\sim} (b_1 + w_1, b_2 + w_2)$.
\end{prop}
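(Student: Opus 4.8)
The plan is to reduce the general statement to the case where the two given chains of basic exchanges are normalized zig-zags of the same length, and then to add them ``slot by slot.'' First I would invoke the preparatory discussion just above the proposition: any chain connecting $(a_1,a_2)$ to $(b_1,b_2)$ can be turned into a normalized zig-zag, i.e. one that alternates strictly between $(1,2)$- and $(2,1)$-exchanges, starts with a $(1,2)$-exchange and ends with a $(2,1)$-exchange. Likewise the chain connecting $(u_1,u_2)$ to $(w_1,w_2)$ can be normalized. Using the two length-adjustment moves recalled in \S2 (inserting a trivial basic exchange to change the parity of the length by $1$, or inserting a length-$2$ detour $(\cdot)\xrightarrow[1,2]{d}(\cdot)\xrightarrow[2,1]{d}(\cdot)$ to change it by $2$), I can arrange that both normalized zig-zags have the \emph{same} length $k$, with the $i$-th step of each being an exchange of the same type (say $(1,2)$ for odd $i$, $(2,1)$ for even $i$).

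Next I would set up the termwise sum. Write the first zig-zag as $(a_{10},a_{20}),(a_{11},a_{21}),\dots,(a_{1k},a_{2k})$ with $(a_{10},a_{20})=(a_1,a_2)$, $(a_{1k},a_{2k})=(b_1,b_2)$, where the step from index $i-1$ to $i$ is an exchange of some $d_i \in A_1\cap A_2$; similarly write the second as $(u_{1i},u_{2i})_{i=0}^k$ with $(u_{10},u_{20})=(u_1,u_2)$, $(u_{1k},u_{2k})=(w_1,w_2)$, step $i$ an exchange of some $e_i \in A_1 \cap A_2$, of the same type as step $i$ of the first zig-zag. Now consider the sequence $(a_{1i}+u_{1i}, a_{2i}+u_{2i})_{i=0}^k$ in $A_1\times A_2$. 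Its endpoints are exactly $(a_1+u_1,a_2+u_2)$ and $(b_1+w_1,b_2+w_2)$, which is what we want to connect. The key computation is the one already displayed in \S1 right before Proposition~1.3: if $(a_{1,i-1},a_{2,i-1})\xrightarrow[1,2]{d_i}(a_{1i},a_{2i})$ and $(u_{1,i-1},u_{2,i-1})\xrightarrow[1,2]{e_i}(u_{1i},u_{2i})$, then $(a_{1,i-1}+u_{1,i-1},\,a_{2,i-1}+u_{2,i-1})\xrightarrow[1,2]{d_i+e_i}(a_{1i}+u_{1i},\,a_{2i}+u_{2i})$, and the same for $(2,1)$-exchanges; since $d_i+e_i\in A_1\cap A_2$ this is a legitimate basic exchange. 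Applying this to each $i$ from $1$ to $k$ shows the summed sequence is itself a chain of basic exchanges, giving $(a_1+u_1,a_2+u_2)\underset{A_1\times A_2}{\sim}(b_1+w_1,b_2+w_2)$.

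The main obstacle—really the only place any care is needed—is the alignment step: without first matching the lengths of the two zig-zags and the types of corresponding steps, one cannot form the termwise sum at all, since a $(1,2)$-exchange and a $(2,1)$-exchange do not add to a basic exchange. So the crux is to check that the two length-modification moves from \S2 genuinely let one synchronize any two normalized zig-zags to a common length with step-by-step matching types; once that bookkeeping is in place, the proof is the one-line additivity computation for basic exchanges applied $k$ times. I would therefore spend most of the write-up on the normalization/alignment and treat the summation itself as immediate, with a remark that the argument also re-proves $R$-linearity in the same way by scaling each $d_i$ by $\lambda$.
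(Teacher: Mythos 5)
Your proposal follows exactly the paper's argument: normalize both chains to zig-zags, adjust them to a common length (so that corresponding steps have matching types), then add termwise using the additivity of same-type basic exchanges. The paper states this in two sentences; your write-up just spells out the bookkeeping that the paper leaves implicit.
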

\begin{proof} We can choose two normalized zig-zags in $A_1 \times A_2$ of same length connecting $(a_1, a_2)$ to $(b_1, b_2)$ and $(u_1, u_2)$ to $(w_1, w_2)$. By adding these zig-zags in the obvious way we obtain a normalized zig-zag in $A_1 \times A_2$ connecting $(a_1 + u_1, a_2 + u_2)$ to $(b_1 + w_1, b_2 + w_2)$.
\end{proof}

\section{Pairs of SA-submodules with amalgamation}\label{sec:3}

Recall that, assuming $A$ is a submodule of $V$,  a submodule $W$ of $A$ is SA in $A$ if for any two elements $u, v \in A$ with $u + v \in W$, also $u \in W$ and $v \in W$. More generally, then a finite sum $a_1 + \dots + a_n$ of elements of $A$ is in $W$ iff every $a_i \in W$.
We are ready for a central result of this paper.

\begin{thm}\label{thm:3.1} Assume that $(A_1, A_2)$,\ $(W_1, W_2)$ are pairs of submodules of $V$ with $W_1 \subset A_1$, $W_2 \subset A_2$ and
\[ W_1 \cap A_2 \subset W_2, \quad A_1 \cap W_2 \subset W_1, \]
and hence
\[ W_1 \cap A_2 = A_1 \cap W_2 = W_1 \cap W_2. \]
Assume furthermore that $(A_1, A_2)$ has amalgamation in $V$ and
that $W_1 \in \SA (A_1)$, $W_2 \in \SA (A_2)$. Then $(W_1, W_2)$ has
amalgamation in $V$.
\end{thm}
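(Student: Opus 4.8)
The goal is to show that the canonical map $\pi_{W_1, W_2} : W_1 \infty_V W_2 \to W_1 + W_2$ is injective, given that $\pi_{A_1, A_2}$ is injective and the two ``modularity'' hypotheses $W_1 \cap A_2 \subset W_2$ and $A_1 \cap W_2 \subset W_1$ hold. By Proposition~\ref{prop:1.6.} (applied with $A'_i = W_i$), this is equivalent to checking that the restriction $\EC_{A_1 \times A_2} \vert W_1 \times W_2$ coincides with $\EC_{W_1 \times W_2}$. Since the inclusion $\EC_{W_1 \times W_2} \subset \EC_{A_1 \times A_2}\vert W_1 \times W_2$ is automatic (Eq.~(1.8)), the real content is the reverse: if $(a_1, a_2), (b_1, b_2) \in W_1 \times W_2$ and they are connected by a chain of basic exchanges \emph{inside $A_1 \times A_2$}, then they are connected by a chain of basic exchanges \emph{inside $W_1 \times W_2$}.

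First I would normalize: by \S\ref{sec:2} we may assume the chain in $A_1 \times A_2$ is a zig-zag, and it suffices to handle a chain of length $2$, i.e. a single ``peak'' $(a_1, a_2) \xrightarrow[(1,2)]{d} (c_1, c_2) \xrightarrow[(2,1)]{e} (b_1, b_2)$ with $d, e \in A_1 \cap A_2$, and then induct on the length of the zig-zag. (The opposite peak shape $(2,1)$ then $(1,2)$ is symmetric via Remark~\ref{rem:1.7}(i); the general zig-zag is an alternating composite of such two-step pieces, and one splits at successive vertices — though care is needed because intermediate vertices $c_i$ need not lie in $W_1 \times W_2$, which is exactly why one argues with the explicit element relations rather than naively concatenating.) Actually the cleanest route is: given the full zig-zag $(a_1,a_2)=(a_{10},a_{20}) \to \cdots \to (a_{1k},a_{2k})=(b_1,b_2)$ in $A_1 \times A_2$ with $a_1,b_1 \in W_1$ and $a_2,b_2 \in W_2$, show directly that all intermediate $a_{1i}$ lie in $W_1$ and all $a_{2i}$ lie in $W_2$, using the SA-hypotheses; once that is known, each basic exchange of the chain automatically takes place within $W_1 \times W_2$ (one needs the exchanged element $d_i$ to lie in $W_1 \cap W_2$, but $d_i \in A_1 \cap A_2$ and $d_i$ is a summand of some $a_{1,i-1}$ or $a_{1,i}$ which is in $W_1$, so $d_i \in W_1$ by $W_1 \in \SA(A_1)$, hence $d_i \in W_1 \cap A_2 = W_1 \cap W_2$).

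So the crux reduces to a single claim: \emph{along the zig-zag, the first coordinates stay in $W_1$ and the second stay in $W_2$}. The first and last vertices are in $W_1 \times W_2$ by hypothesis. Since the sum $a_{1i} + a_{2i}$ is constant along the chain and equals $w := a_1 + a_2 \in W_1 + W_2$, and since $W_1 + W_2 \in \SA(A_1 + A_2)$ would give $a_{1i} \in W_1 + W_2$ for free — but that is not quite what is wanted (we need $a_{1i} \in W_1$, not merely $W_1 + W_2$). The right tool is the local structure of a basic exchange. Consider two consecutive vertices related by a $(1,2)$-exchange of $d_i$: $a_{1,i-1} = a_{1,i} + d_i$ and $a_{2,i} = a_{2,i-1} + d_i$. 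If I already know $a_{1,i-1} \in W_1$, then $a_{1,i} + d_i \in W_1$ with both summands in $A_1$, so $a_{1,i} \in W_1$ and $d_i \in W_1 \cap A_2 \subset W_2$; hence also $a_{2,i} = a_{2,i-1} + d_i$, and if $a_{2,i-1} \in W_2$ then $a_{2,i} \in W_2$. Symmetrically for $(2,1)$-exchanges and for propagating \emph{backwards} from the last vertex. The subtlety — and the main obstacle — is that a naive forward induction from the first vertex fails: a basic exchange can \emph{add} an $A_1$-summand to $a_{1,i-1}$ to get $a_{1,i}$ (a $(2,1)$-exchange moves $d_i$ from coordinate $2$ into coordinate $1$), and knowing $a_{1,i-1}+d_i = a_{1,i} \in W_1$ does not follow just from $a_{1,i-1} \in W_1$. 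One has to use the constancy of the total sum together with both endpoints: I would run the SA-propagation argument on the ``shape'' of a normalized zig-zag, exploiting that a normalized zig-zag starts $(1,2)$ and ends $(2,1)$, so the first coordinate is non-increasing (as a $\leq$-type relation) on the way up to a peak and then... — more robustly, I would instead argue: the amalgamation of $(A_1,A_2)$ means $(a_{1i}, a_{2i})$ and $(b_1,b_2)$ have the \emph{same} total sum, so it suffices to prove the statement for a zig-zag of length $2$ (a single peak) and deduce the general case by the pushout/uniqueness of $A_1 \infty_V A_2$, reducing everything to: if $a_1 + a_2 = b_1 + b_2$ with $a_1,b_1 \in W_1$, $a_2,b_2 \in W_2$, $a_1 = b_1 + e_1$, $b_2 = a_2 + e_1$ ... . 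In the two-step peak case $(a_1,a_2)\xrightarrow[(1,2)]{d}(c_1,c_2)\xrightarrow[(2,1)]{e}(b_1,b_2)$ one has $a_1 = c_1 + d$, $c_2 = a_2 + d$, $b_1 = c_1 + e$, $c_2 = b_2 + e$. From $a_1 \in W_1$ and $a_1 = c_1 + d$ with $c_1, d \in A_1$ (noting $d \in A_1 \cap A_2 \subset A_1$) we get $c_1 \in W_1$ and $d \in W_1 \cap A_2 = W_1 \cap W_2$; similarly $e \in W_1 \cap W_2$ from $b_1 \in W_1$; and then $c_2 = a_2 + d \in W_2$. So $(c_1, c_2) \in W_1 \times W_2$ and both exchanges are legal in $W_1 \times W_2$, giving $(a_1,a_2) \underset{W_1 \times W_2}{\sim} (b_1, b_2)$. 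The remaining work is to reduce a general chain in $A_1 \times A_2$ to such peaks — here I use that by Proposition~\ref{prop:2.1}-style normalization and the amalgamation of $(A_1, A_2)$, the chain can be replaced by one of length $\leq 2$ (since having $\AM_V$ means $\EC_{A_1\times A_2}$ is \emph{determined} by the total sum, and one checks every equivalence class is reachable by a single peak once both endpoints have equal total sum — this is essentially the pushout property). I expect the bookkeeping in this reduction to peaks to be the one genuinely delicate point; the SA-propagation itself is short.
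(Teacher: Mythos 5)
Your setup (reduce via Proposition~\ref{prop:1.6.} to showing $\EC_{A_1\times A_2}\vert W_1\times W_2=\EC_{W_1\times W_2}$, then control the exchanged elements $d_i$ by the SA-hypotheses) is exactly the paper's route, and your treatment of a single peak is correct. The genuine gap is the final reduction you lean on: you claim that because $(A_1,A_2)$ has $\AM_V$, any two exchange-equivalent pairs with equal total sum can be connected by a chain of length $\le 2$ (a single peak). Amalgamation only says that $\EC_{A_1\times A_2}$-equivalence coincides with equality of sums; it gives no control whatsoever on the length or shape of a chain realizing an equivalence, and the pushout property does not produce a common element $c_1\in A_1$ with $a_1=c_1+d$, $b_1=c_1+e$, $d,e\in A_1\cap A_2$. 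In a general (non-cancellative) additive monoid two decompositions of the same element need not admit such a one-step common refinement, so this step is not ``delicate bookkeeping'' but an unproved (and in general unavailable) statement, and the proof as written does not go through.

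The obstacle that pushed you onto this detour is in fact illusory, and removing it recovers the paper's proof. Run the forward induction along the chain, tracking \emph{both} coordinates. At a $(1,2)$-exchange of $d_i$ the source's first coordinate splits as $a_{1,i-1}=a_{1,i}+d_i$, so $W_1\in\SA(A_1)$ gives $a_{1,i},d_i\in W_1$, hence $d_i\in W_1\cap A_2\subset W_2$ and $a_{2,i}=a_{2,i-1}+d_i\in W_2$. At a $(2,1)$-exchange of $d_i$ it is the source's \emph{second} coordinate that splits, $a_{2,i-1}=a_{2,i}+d_i$, so $W_2\in\SA(A_2)$ gives $a_{2,i},d_i\in W_2$, hence $d_i\in A_1\cap W_2\subset W_1$ and $a_{1,i}=a_{1,i-1}+d_i\in W_1$. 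Thus in both cases the target stays in $W_1\times W_2$ and the move is a basic exchange of an element of $W_1\cap W_2$, i.e.\ an exchange in $W_1\times W_2$; arbitrary chains then follow by induction, with no appeal to $\AM_V$ at this stage ($\AM_V$ of $(A_1,A_2)$ enters only through Proposition~\ref{prop:1.6.}). Note that your computations never invoke $W_2\in\SA(A_2)$ or the hypothesis $A_1\cap W_2\subset W_1$ --- since the statement is symmetric in the two coordinates, that asymmetry was the signal that half of the argument was missing.
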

\begin{proof} We will  verify that $\EC_{A_1 \times A_2} \vert W_1 \times W_2 = \EC_{W_1 \times W_2}$. Then, we know by Proposition~1.6 that $(W_1, W_2)$ has $\AM_V$. Due to our explicit description of exchange equivalence in \S 1 it suffices to verify the
following:

Given $a_1, b_1 \in W_1$, $a_2, b_2 \in W_2$, $d \in A_1 \cap A_2$ with either
\[ (a_1, a_2) \xrightarrow[\quad 1,2 \quad]{d} (b_1, b_2) \quad \mbox{or} \quad (a_1, a_2) \xrightarrow[\quad 2,1 \quad]{d} (b_1, b_2) \]
in $A_1 \times A_2$, then these moves are exchanges in $W_1 \times W_2$.
In the first case we have
\[ a_1 = b_1 + d \; , \quad b_2 = a_2 + d. \]
From $a_1 \in W_1$ we conclude that $b_1, d \in W_1$. It follows that $d \in W_1 \cap A_2 = W_1 \cap W_2$. Thus $(a_1, a_2) \xrightarrow[\; 1,2 \;]{d} (b_1, b_2)$ is a (1,2)-exchange in $W_1 \times W_2$. The second case is settled in the same way. \end{proof}

The question arises whether $W_1 + W_2$ is again SA in $A_1 + A_2$.

\begin{thm}\label{thm:3.2} Under the assumptions of Theorem \ref{thm:3.1}  the following also holds.
\begin{itemize}\dispace
\item[a)] If $(a_1, a_2) \in A_1 \times A_2$, $(w_1, w_2) \in W_1 \times W_2$ and $(a_1, a_2) \underset{A_1 \times A_2}{\sim} (w_1, w_2)$, then $(a_1, a_2) \in W_1 \times W_2$. In other words, the subset $W_1 \times W_2$ of $A_1 \times A_2$ is a union of $\EC_{A_1 \times A_2}$-equivalence classes.
\item[b)] $W_1 + W_2 \in \SA (A_1 + A_2)$.
\end{itemize}
\end{thm}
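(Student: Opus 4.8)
Both parts follow once we have the key ``absorption'' fact embedded in Theorem~\ref{thm:3.1}: along any chain of basic exchanges in $A_1 \times A_2$, if one endpoint lies in $W_1 \times W_2$ then so does every pair in the chain. I would establish this first, then derive (a) and (b) from it.

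\emph{Step 1: one basic exchange cannot leave $W_1 \times W_2$.} Suppose $(a_1, a_2) \in W_1 \times W_2$ and $(a_1, a_2) \xrightarrow[\ 1,2 \ ]{d} (b_1, b_2)$ in $A_1 \times A_2$, so that $a_1 = b_1 + d$ with $d \in A_1 \cap A_2$ and $b_2 = a_2 + d$. From $a_1 \in W_1$ and $W_1 \in \SA(A_1)$ we get $b_1 \in W_1$ and $d \in W_1$; hence $d \in W_1 \cap A_2 = W_1 \cap W_2 \subset W_2$, so $b_2 = a_2 + d \in W_2$. Thus $(b_1, b_2) \in W_1 \times W_2$. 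By symmetry the same argument applies if we start from the $(b_1,b_2)$-end, or if the move is a $(2,1)$-exchange. Iterating along a chain: if one pair of a chain of basic exchanges in $A_1 \times A_2$ lies in $W_1 \times W_2$, then the adjacent pair does too, and hence by induction the whole chain does. This is precisely part (a): if $(a_1,a_2) \underset{A_1 \times A_2}{\sim} (w_1,w_2)$ with $(w_1,w_2) \in W_1 \times W_2$, then $(a_1,a_2) \in W_1 \times W_2$, i.e.\ $W_1 \times W_2$ is a union of $\EC_{A_1 \times A_2}$-classes.

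\emph{Step 2: deduce (b).} Let $u, v \in A_1 + A_2$ with $u + v \in W_1 + W_2$. Write $u = a_1 + a_2$, $v = b_1 + b_2$ with $a_i, b_i \in A_i$, and write $u+v = w_1 + w_2$ with $w_i \in W_i$. Then $(a_1 + b_1,\, a_2 + b_2)$ and $(w_1, w_2)$ have the same image under $\pi_{A_1,A_2}$, so since $(A_1,A_2)$ has $\AM_V$, \eqref{eq:1.5} gives $(a_1 + b_1,\, a_2 + b_2) \underset{A_1 \times A_2}{\sim} (w_1, w_2)$. By Step~1 (part (a)), $(a_1 + b_1,\, a_2 + b_2) \in W_1 \times W_2$, i.e.\ $a_1 + b_1 \in W_1$ and $a_2 + b_2 \in W_2$. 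Now $a_1, b_1 \in A_1$ and $W_1 \in \SA(A_1)$ force $a_1, b_1 \in W_1$; likewise $a_2, b_2 \in W_2$. Hence $u = a_1 + a_2 \in W_1 + W_2$ and $v = b_1 + b_2 \in W_1 + W_2$, proving $W_1 + W_2 \in \SA(A_1 + A_2)$.

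\emph{Main obstacle.} There is no deep difficulty here once Theorem~\ref{thm:3.1} is in hand; the crux is the bookkeeping in Step~1 — checking that a single basic exchange starting from a pair in $W_1 \times W_2$ produces a pair in $W_1 \times W_2$, using the ``strong''/SA property to split off $d$ and the hypothesis $W_1 \cap A_2 = W_1 \cap W_2$ to land $d$ back inside $W_2$. The only subtlety is making sure the induction along a chain is anchored correctly: the SA property lets us propagate membership in \emph{both} directions along a basic exchange (we may read it from either endpoint), which is exactly what is needed since the chain realizing $(a_1+b_1,a_2+b_2) \sim (w_1,w_2)$ may pass through $W_1 \times W_2$ only at one end. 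Everything else is the routine translation between amalgamation (via \eqref{eq:1.5}) and the combinatorics of exchange chains already set up in \S1–\S3.
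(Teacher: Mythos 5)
Your proof is correct and follows essentially the same route as the paper: part (a) is reduced to a single basic exchange, where the SA property of $W_1$ or $W_2$ splits off $d$, the hypothesis $W_1 \cap A_2 = A_1 \cap W_2 = W_1 \cap W_2$ transfers it to the other factor, and then part (b) is deduced exactly as in the paper via $\AM_V$ and a second application of SA. The only cosmetic difference is that you phrase the one-exchange step as propagating forward from the known pair (and note the bidirectionality explicitly), whereas the paper phrases it with the known pair $(w_1,w_2)$ as the target of the exchange; these are the same two cases after relabeling.
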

\begin{proof} a): It suffices to verify that $(a_1, a_2) \in W_1 \times W_2$ in the special cases that
\[ (a_1, a_2) \xrightarrow[\quad 1,2 \quad]{d} (w_1, w_2) \quad \mbox{or} \quad (a_1, a_2) \xrightarrow[\quad 2,1 \quad]{d} (w_1, w_2) \]
due to our description of $\EC_{A_1 \times A_2}$ in \S 1. In the first case we have $w_2 = a_2 + d$, $a_1 = w_1 + d$, and $d \in A_1 \cap A_2$. Since $W_2$ is SA in $A_2$, we conclude that $a_2 \in W_2$ and $d \in W_2 \cap (A_1 \cap A_2) = A_1 \cap W_2 = W_1 \cap W_2$, whence $a_1 = w_1 + d \in W_1 + W_1 \cap W_2 = W_1$, as desired. In the second case $w_1 = a_1 + d$, $a_2 = w_2 + d$, and again $d \in A_1 \cap A_2$. Since $W_1 \in \SA (A_1)$, we have $a_1 \in W_1$, $d \in W_1 \cap (A_1 \cap A_2) = W_1 \cap A_2 = W_1 \cap W_2$, and so $a_2 = w_2 + d \in W_2 + W_1 \cap W_2 = W_2$, as desired.

\pSkip
b): Given $u, v \in A_1 + A_2$ with $u + v \in W_1 + W_2$, we need to verify that $u, v \in W_1 + W_2$. We write $u = a_1 + a_2$, $v = b_1 + b_2$, $w = w_1 + w_2$ with $a_1, b_1 \in A_1$, $a_2, b_2 \in A_2$, $w_1 \in W_1$, $w_2 \in W_2$. Now
\[ (a_1 + b_1) + (a_2 + b_2) = w_1 + w_2. \]
Since $(A_1, A_2)$ has $\AM_V$, this implies
\[ (a_1 + b_1, a_2 + b_2) \underset{A_1 \times A_2}{\sim} (w_1, w_2), \]
and thus, as proved above,
\[ a_1 + b_1 \in W_1 \; , \quad a_2 + b_2 \in W_2. \]
Since $W_1 \in \SA (A_1)$, $W_2 \in \SA (A_2)$, we conclude that $a_1, b_1 \in W_1$ and $a_2, b_2 \in W_2$. Thus $u = a_1 + a_2 \in W_1 + W_2$ and $v = b_1 + b_2 \in W_1 + W_2$. \end{proof}

\begin{cor}\label{cor:3.3} Assume that $(A_1, A_2)$ has $\AM_V$ and that $A_1 \cap A_2 \in \SA (A_2)$. Then $A_1 \in \SA (A_1 + A_2)$.
\end{cor}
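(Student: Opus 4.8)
The plan is to derive Corollary~\ref{cor:3.3} by specializing Theorem~\ref{thm:3.2}(b) to a degenerate choice of the data $(W_1, W_2)$. First I would set $W_1 := A_1$ and $W_2 := A_1 \cap A_2$, so that $W_1 \subset A_1$ and $W_2 \subset A_2$ are automatic, and check the two containment hypotheses of Theorem~\ref{thm:3.1}: here $W_1 \cap A_2 = A_1 \cap A_2 = W_2$ (so $W_1 \cap A_2 \subset W_2$ holds, even with equality), and $A_1 \cap W_2 = A_1 \cap (A_1 \cap A_2) = A_1 \cap A_2 = W_1 \cap W_2 \subset W_1$ holds trivially. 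Thus the common intersection is $W_1 \cap W_2 = A_1 \cap A_2$, exactly as the theorem requires.

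Next I would verify the two SA-conditions. We need $W_1 = A_1 \in \SA(A_1)$, which is immediate since $A_1$ is the whole module and the SA-condition $x + y \in A_1 \Rightarrow x \in A_1, y \in A_1$ is vacuously true for $x, y \in A_1$. We need $W_2 = A_1 \cap A_2 \in \SA(A_2)$, which is precisely the standing hypothesis $A_1 \cap A_2 \in \SA(A_2)$ of the corollary. Finally, $(A_1, A_2)$ has $\AM_V$ by assumption, so all hypotheses of Theorem~\ref{thm:3.1}, and hence of Theorem~\ref{thm:3.2}, are met.

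Now Theorem~\ref{thm:3.2}(b) yields $W_1 + W_2 \in \SA(A_1 + A_2)$, i.e. $A_1 + (A_1 \cap A_2) \in \SA(A_1 + A_2)$. But $A_1 + (A_1 \cap A_2) = A_1$ since $A_1 \cap A_2 \subset A_1$, so this reads $A_1 \in \SA(A_1 + A_2)$, which is exactly the assertion. I would write this out in three or four lines: announce the specialization, dispatch the four hypotheses in a display or inline list, invoke Theorem~\ref{thm:3.2}(b), and simplify $W_1 + W_2$ to $A_1$.

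The only point requiring the slightest care — and the place I expect a careless reader to stumble — is confirming that the asymmetric containment hypotheses of Theorem~\ref{thm:3.1} are genuinely satisfied with this lopsided choice $W_1 = A_1$, since one of them, $A_1 \cap W_2 \subset W_1$, looks at first like it could fail but in fact collapses to $A_1 \cap A_2 \subset A_1$. There is no real obstacle: the corollary is a direct corollary in the literal sense, and the entire content is choosing the right degenerate instance and observing that $A_1 + (A_1 \cap A_2) = A_1$.
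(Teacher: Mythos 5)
Your proof is correct and takes exactly the same route as the paper, which applies Theorem~\ref{thm:3.2}(b) with $W_1 = A_1$ and $W_2 = A_1 \cap A_2$. You simply spell out the hypothesis checks and the final simplification $A_1 + (A_1 \cap A_2) = A_1$ that the paper leaves implicit.
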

\begin{proof} Apply Theorem 3.2.b with $W_1 = A_1$, $W_2 = A_1 \cap A_2$.
\end{proof}

\section{Multiple amalgamation}\label{sec:4}

We expand the exchange equivalence relation on $A_1 \times A_2$ for pairs $(A_1, A_2)$ in $\Mod (V)$ to an ``exchange equivalence`` on $A_1 \times \dots \times A_n$ for $n$-tuples $(A_1, \dots, A_n)$ in $\Mod (V)$, $n \geq 3$.

\begin{defn}\label{def:4.1} Let $(a_1, \dots, a_n)$ and $(b_1, \dots, b_n)$ be tuples in $A_1 \times \dots \times A_n$.
\begin{itemize}
\item[a)] We say that $n$-tuples $(a_1, \dots, a_n)$ and $(b_1, \dots, b_n)$ are \textbf{binary exchange equivalent} (in $A_1 \times \dots \times A_n)$, if there exist $i, j \in \{ 1, \dots, n \}$, $i \ne j$, such that $a_k = b_k$ for $k \ne i, j$ and $(a_i, a_j)  \underset{A_i \times A_j}{\sim} (b_i, b_j)$, (as defined in \S\ref{sec:1}). More specifically we then say that these tuples are $(i, j)$-(exchange)-equivalent, and write
\[ (a_1, \dots, a_n) \overset{i,j}{\sim} (b_1, \dots, b_n), \]
or more elaborately
\[ (a_1, \dots, a_n) \raisebox{-6pt}{$\overset{i,j}{\scriptstyle \widetilde{A_1 \times \dots \times A_n}}$}  (b_1, \dots, b_n). \]
\item[b)] We say that $(a_1, \dots, a_n)$ and $(b_1. \dots, b_n)$ are \textbf{exchange equivalent} (in $A_1 \times \dots \times A_n)$, and write
\[ (a_1, \dots, a_n) \underset{A_1 \times \dots \times A_n}{\sim} (b_1, \dots, b_n), \]
if there is a finite chain of tuples in $A_1 \times \dots \times A_n$ starting with $(a_1, \dots, a_n)$ and ending with $(b_1, \dots, b_n)$, such that any two consecutive members are binary exchange equivalent.

This is clearly an equivalence relation on the set $A_1 \times \dots \times A_n$. We denote it by
$\EC_{A_1 \times \dots \times A_n}^{V}$ or $\EC_{A_1 \times \dots \times A_n}$ for short.
\end{itemize}
\end{defn}

\begin{prop}\label{prop:4.2} $\EC_{A_1 \times \dots \times A_n}$ is additive. In other words, when tuples $(a_1, \dots, a_n)$, $(b_1, \dots, b_n)$, $(u_1, \dots, u_n)$, $(w_1, \dots, w_n)$ in $A_1 \times \dots \times A_n$ are given with
\[ (a_1, \dots, a_n) \underset{A_1 \times \dots \times A_n}{\sim} (b_1, \dots, b_n) \dss{\text{ and }}  (u_1, \dots, u_n) \underset{A_1 \times \dots \times A_n}{\sim} (w_1, \dots, w_n), \]
then
\[ (a_1 + u_1, \dots, a_n + u_n) \underset{A_1 \times \dots \times A_n}{\sim} (b_1 + w_1, \dots, b_n + w_n). \]
\end{prop}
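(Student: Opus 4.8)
\textbf{Proof plan for Proposition~\ref{prop:4.2}.}
The plan is to reduce the additivity of $\EC_{A_1 \times \dots \times A_n}$ to the additivity of the binary exchange equivalences $\EC_{A_i \times A_j}$, which was already established in Proposition~\ref{prop:2.1}. By the definition of $\EC_{A_1 \times \dots \times A_n}$ as the transitive closure of binary exchange equivalence, and a routine induction on the combined length of the two given chains, it suffices to treat the case where each of the two hypotheses is a single binary exchange step. So assume
\[ (a_1, \dots, a_n) \overset{i,j}{\sim} (b_1, \dots, b_n) \dss{\text{ and }} (u_1, \dots, u_n) \overset{k,l}{\sim} (w_1, \dots, w_n), \]
meaning $a_m = b_m$ for $m \ne i,j$ with $(a_i,a_j) \underset{A_i \times A_j}{\sim} (b_i,b_j)$, and similarly $u_m = w_m$ for $m \ne k,l$ with $(u_k,u_l) \underset{A_k \times A_l}{\sim} (w_k,w_l)$.

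First I would handle the case $\{i,j\} = \{k,l\}$: here only coordinates $i$ and $j$ move in both steps, and all other coordinates satisfy $a_m + u_m = b_m + w_m$, so the claim is exactly the binary additivity $(a_i + u_i, a_j + u_j) \underset{A_i \times A_j}{\sim} (b_i + w_i, b_j + w_j)$ supplied by Proposition~\ref{prop:2.1}. Next, the case where $\{i,j\}$ and $\{k,l\}$ are disjoint: then the two moves act on disjoint sets of coordinates, so I can perform them sequentially — first change coordinates $i,j$ using $(a_i,a_j) \sim (b_i,b_j)$ (adding $u_i,u_j$ to those coordinates, which leaves the relation valid by the special case $\{i,j\}=\{i,j\}$ above, with $(u_i,u_j)$ held fixed), then independently change coordinates $k,l$ — and chain the two resulting binary equivalences. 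The remaining case is $|\{i,j\} \cap \{k,l\}| = 1$, say $j = k$; this is the only genuinely interactive case, and I would again split each single binary equivalence into the sum of itself with the trivial equivalence on the other operand (again via Proposition~\ref{prop:2.1}), reducing to: $(a_i,a_j) \sim (b_i,b_j)$ combined with the constant $(u_i,u_j)$, followed by the constant $(b_j,b_l)$ combined with $(u_j,u_l) \sim (w_j,w_l)$; composing these two binary moves on the overlapping coordinate $j$ gives the result.

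The main obstacle is purely bookkeeping: making the reduction to single-step hypotheses clean, and in the overlapping-index case correctly tracking which coordinate is held fixed at which stage so that each intermediate move is genuinely a binary exchange equivalence in the appropriate factor. No new algebraic input beyond Proposition~\ref{prop:2.1} is needed — the binary additivity already encodes the only substantive fact — so once the case analysis on $|\{i,j\} \cap \{k,l\}|$ is set up, each case closes by one or two applications of Proposition~\ref{prop:2.1} together with transitivity of $\EC_{A_1 \times \dots \times A_n}$. A slicker alternative, which I would mention, is to observe that adding a fixed tuple $(c_1,\dots,c_n) \in A_1 \times \dots \times A_n$ is an $\EC_{A_1 \times \dots \times A_n}$-equivariant operation (immediate from Proposition~\ref{prop:2.1} applied coordinate-pair-wise), so that $(a_\bullet) \sim (b_\bullet)$ implies $(a_\bullet + u_\bullet) \sim (b_\bullet + u_\bullet)$ and $(b_\bullet + u_\bullet) \sim (b_\bullet + w_\bullet)$, and the conclusion follows by transitivity — this avoids the case analysis entirely.
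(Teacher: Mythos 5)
Your proof is correct, and the ``slick alternative'' you sketch at the end is essentially the paper's own argument, phrased slightly differently. The paper picks chains of binary equivalences from $(a_\bullet)$ to $(b_\bullet)$ and from $(u_\bullet)$ to $(w_\bullet)$, inserts trivial binary $(i,j)$-equivalences so that both chains have the same length with $(i,j)$-exchanges at matching positions, and then adds the two refined chains termwise; each termwise sum is again a binary $(i,j)$-exchange by Proposition~\ref{prop:2.1}. Choosing the alignment ``all of chain one first (padded with trivial moves on chain two), then all of chain two (padded with trivial moves on chain one)'' is exactly your observation that translation by a fixed tuple is $\EC_{A_1 \times \dots \times A_n}$-equivariant, followed by transitivity: $(a_\bullet + u_\bullet) \sim (b_\bullet + u_\bullet) \sim (b_\bullet + w_\bullet)$.

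Your primary argument — reduction to single binary steps followed by a case split on $|\{i,j\} \cap \{k,l\}|$ — is also valid, and you track the intermediate tuples correctly in the overlap case $j = k$. But it is heavier than necessary: once you realize (as your alternative does) that Proposition~\ref{prop:2.1} already covers the sum of a genuine binary move with a constant pair, the overlap pattern of $\{i,j\}$ and $\{k,l\}$ becomes irrelevant, and the three cases collapse into one. So the case analysis buys you nothing over the equivariance-plus-transitivity argument, which is what the paper (and your closing remark) actually uses.
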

\begin{proof} We pick chains of binary equivalences in $A_1 \times \dots \times A_n$ from $(a_1, \dots, a_n)$ to $(b_1, \dots, b_n)$ and from $(u_1, \dots, u_n)$ to $(w_1, \dots, w_n)$. For any $(x_1, \dots, x_n) \in A_1 \times \dots \times A_n$ we have the trivial equivalence $(x_1, \dots, x_n) \sim (x_1, \dots, x_n)$, which is a binary $(i, j)$-equivalence for any two different $i, j \in \{ 1, \dots, n \}$. Inserting trivial equivalences in both chains, we refine them to chains of some length with $(i, j)$-exchanges at the same places. Adding the refined chains in $A_1 \times \dots \times A_n$, we obtain a chain of binary equivalences from $(a_1 + u_1, \dots, a_n + u_n)$ to $(b_1 + w_1, \dots, b_n + w_n)$, as desired. \end{proof}

It is evident from the case $n = 2$ that $(a_1, \dots, a_n) \overset{i, j}{\sim} (b_1, \dots, b_n)$ implies $(\lambda a_1, \dots, \lambda a_n) \overset{i, j}{\sim} (\lambda b_1, \dots, \lambda b_n)$ for any $\lambda \in R$. We conclude that $\EC_{A_1 \times \dots \times A_n}$ is an $R$-linear equivalence relation on the $R$-module $A_1 \times \dots \times A_n$. The following is now obvious, as in the case $n = 2$ (Proposition~1.3), and tells us that we have been on the right track.

\begin{thm}\label{thm:4.3} $\EC_{A_1 \times \dots \times A_n}$ is the finest additive equivalence relation on $A_1 \times \dots \times A_n$ with the property that for any (different) $i, j \in \{ 1, \dots, n \}$ and $d \in A_i \cap A_j$
\[ (0, \dots, 0, \underset{i}{d}, 0, \dots, 0) \ds \sim (0, \dots, 0, \underset{j}{d}, 0, \dots, 0). \]
This relation is $R$-linear. \end{thm}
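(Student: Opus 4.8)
The plan is to argue exactly as in the case $n=2$ treated in Proposition~\ref{pro:1.3}, with the verification of additivity now supplied by Proposition~\ref{prop:4.2}. There are three things to check: (i) the relation $\EC_{A_1 \times \dots \times A_n}$ is additive and $R$-linear; (ii) it satisfies the stated ``unit'' condition $(0,\dots,\underset{i}{d},\dots,0)\sim(0,\dots,\underset{j}{d},\dots,0)$ for $d \in A_i \cap A_j$; and (iii) any additive equivalence relation $\approx$ on $A_1 \times \dots \times A_n$ satisfying (ii) is coarser than $\EC_{A_1 \times \dots \times A_n}$, i.e.\ $(a_1,\dots,a_n) \underset{A_1\times\dots\times A_n}{\sim} (b_1,\dots,b_n)$ forces $(a_1,\dots,a_n) \approx (b_1,\dots,b_n)$.

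For (i): additivity is Proposition~\ref{prop:4.2}, and $R$-linearity was observed in the paragraph preceding the statement, since a basic $(i,j)$-exchange of $d$ becomes, after multiplying by $\lambda \in R$, a basic $(i,j)$-exchange of $\lambda d$ (using $\lambda d \in A_i \cap A_j$), so multiplication by $\lambda$ carries a chain of binary exchanges to a chain of binary exchanges. For (ii): the two tuples differ only in coordinates $i$ and $j$, where they read $(d,0)$ and $(0,d)$; by Definition~\ref{def:1.1} and Definition~\ref{def:1.2} we have $(d,0) \underset{A_i \times A_j}{\sim} (0,d)$ via the single $(i,j)$-exchange of $d$ (as $d \in A_i \cap A_j$), hence the two $n$-tuples are $(i,j)$-exchange equivalent, so in particular $\EC_{A_1 \times \dots \times A_n}$-equivalent.

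For (iii), the heart of the matter: let $\approx$ be any additive equivalence relation on $A_1 \times \dots \times A_n$ satisfying (ii). It suffices to show that a single binary $(i,j)$-exchange relates $\approx$-equivalent tuples, since then so does any finite chain of such, and $\underset{A_1\times\dots\times A_n}{\sim}$ is by definition generated by such chains. So suppose $(a_1,\dots,a_n)$ and $(b_1,\dots,b_n)$ agree off coordinates $i,j$ and $(a_i,a_j)\underset{A_i\times A_j}{\sim}(b_i,b_j)$. Unwinding the latter, it is generated by basic $(i,j)$- and $(j,i)$-exchanges of various $d \in A_i \cap A_j$ in the two relevant coordinates, so by transitivity it is enough to handle one basic exchange, say $a_i = b_i + d$, $b_j = a_j + d$ with $d \in A_i \cap A_j$. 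Now apply additivity of $\approx$: from the unit relation (ii) $(0,\dots,\underset{i}{d},\dots,0) \approx (0,\dots,\underset{j}{d},\dots,0)$, adding the common tuple $(a_1,\dots,\underset{i}{b_i},\dots,\underset{j}{a_j},\dots,a_n)$ to both sides yields exactly $(a_1,\dots,\underset{i}{b_i+d},\dots,\underset{j}{a_j},\dots,a_n) \approx (a_1,\dots,\underset{i}{b_i},\dots,\underset{j}{a_j+d},\dots,a_n)$, which is $(a_1,\dots,a_n) \approx (b_1,\dots,b_n)$. (Reflexivity of $\approx$ gives $(a_1,\dots,a_n)\approx(a_1,\dots,a_n)$, which is the ``additive'' input on the constant side.) This proves $\EC_{A_1 \times \dots \times A_n}$ is the finest such relation.

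I do not expect a genuine obstacle here: the argument is a routine bootstrapping from the $n=2$ case, and the only mild subtlety is bookkeeping the coordinates when transporting a basic exchange through additivity of $\approx$. The one step warranting care is ensuring that the ``unit'' relations (ii) plus additivity really do recover an \emph{arbitrary} basic $(i,j)$-exchange (not just the special ones with a zero entry), which is precisely the computation displayed above.
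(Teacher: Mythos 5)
Your proof is correct and fills in exactly what the paper leaves as "obvious, as in the case $n=2$": the paper offers no written proof of Theorem~\ref{thm:4.3}, only a pointer back to Proposition~\ref{pro:1.3} (whose "finest" claim is itself established via the observation that property~(3) follows from (1) and (2)). Your part~(iii) is precisely the $n$-coordinate version of that argument — reduce a chain of binary $(i,j)$-exchanges to a single basic exchange and then use additivity of $\approx$ against the unit relation — so this is the same route the authors intend, just written out.
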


We denote the $\EC_{A_1 \times \dots \times A_n}$-equivalence class
of a tuple $(a_1, \dots, a_n) \in A_1 \times \dots \times A_n$ by
$[a_1, \dots, a_n]_{A_1 \times \dots \times A_n}$, or $[a_1, \dots,
a_n]$ for short. The rules for addition and multiplication by
scalars $\lambda \in R$ for $n = 2$ (cf. \S\ref{sec:1}) generalize
in the obvious way, and establish the structure of an $R$-module on
the set $A_1 \times \dots \times A_n/\EC_{A_1 \times \dots \times
A_n}^{\vee}$ of equivalence classes, which we call again the
\textbf{amalgamation} of $(A_1, \dots, A_n)$ (in $V$). We denote
this $R$-module by $A_1  \infty_V \cdots \infty_V A_n$. As in the
case $n = 2$ we have a well defined $R$-linear surjection
\begin{equation}\label{eq:4.1}
  \kappa_{A_1 \times \cdots \times A_n} : A_1 \infty   \cdots \infty A_n \dss \Onto A_1 + \dots + A_n \subset V
\end{equation}
mapping $[a_1, \dots, a_n]$ to $a_1 + \cdots + a_n$ \footnote{When
the ambient module $V$ of the $A_i$ is fixed, we often omit the
suffix ``$V$''.} and $R$-linear injections
\[ j_k : A_k \longrightarrow A_1 \infty \cdots \infty A_n \]
mapping $a \in A_k$ to $[0, \dots, 0, a, 0, \dots, 0]$, with entry $a$ at the place $k$. It allows us to identify~ $A_k$ with the submodule $j_k (A_k)$ of $A_1 \infty \cdots \infty A_n$ whenever this is appropriate. Note that
\begin{equation}\label{eq:4.2} A_1 \infty \cdots \infty A_n = \sum\limits_{k = 1}^n j_k (A_k). \end{equation}

\begin{defn}\label{def:4.4} As in the case $n = 2$, we say that the
 sequence $(A_1, \dots, A_n)$ has \textbf{amalgamation} in $V$, abbreviated $\AM_V$, if $\kappa_{A_1, \dots, A_n}$ is injective,
  and so is an $R$-module isomorphism from $A_1 \infty_V \cdots \infty_V A_n$ to $A_1 + \dots + A_n \subset V$. \end{defn}

Intuitively we regard the map $$\kappa_{A_1, \dots, A_n} : A_1 \infty \dots \infty A_n \Onto A_1 + \dots + A_n$$ as a kind of ``resolution'' of the submodule $V : = A_1 + \dots + A_n$ of $V$ with respect to the family $(A_1, \dots, A_n)$ of modules generating $V$.

\begin{thm}\label{thm:4.5} Assume that $(A_1, \dots, A_n)$ and $(W_1, \dots, W_n)$ are $n$-tuples of submodules of ~$V$ with $W_k \in \SA (A_k)$ for $k = 1, \dots, n$ and $W_i \cap A_j \subset W_j$, whence $W_i \cap A_j = W_i \cap W_j$ for any two (different) $i, j \in \{ 1, \dots, n \}$. Then, if $(A_1, \dots, A_n)$ has $\AM_V$, also  $(W_1, \dots, W_n)$ has ~$\AM_V$.
\end{thm}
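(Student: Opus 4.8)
The plan is to imitate the proof of Theorem 3.1 and reduce everything to the two-variable case already treated. First I would record that the comparison maps $\kappa_{A'_1\times A'_2,\,A_1\times A_2}$ of \S1 extend verbatim to $n$-tuples: whenever $W_k\subset A_k$ for $k=1,\dots,n$, the pushout (universal) property of the amalgamations yields a unique $R$-linear map $W_1\infty_V\cdots\infty_V W_n\to A_1\infty_V\cdots\infty_V A_n$ sending $[w_1,\dots,w_n]_{W_1\times\cdots\times W_n}$ to $[w_1,\dots,w_n]_{A_1\times\cdots\times A_n}$ and commuting with the canonical maps onto $W_1+\cdots+W_n\hookrightarrow A_1+\cdots+A_n$. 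Running the proof of Proposition 1.6 word for word with this map, and using that $(A_1,\dots,A_n)$ has $\AM_V$, one gets that $(W_1,\dots,W_n)$ has $\AM_V$ if and only if $\EC_{A_1\times\cdots\times A_n}\vert W_1\times\cdots\times W_n=\EC_{W_1\times\cdots\times W_n}$. So the theorem reduces to proving this equality of equivalence relations on $W_1\times\cdots\times W_n$.

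For that equality, one inclusion is immediate: $W_i\cap W_j\subset A_i\cap A_j$, so a chain of binary exchanges lying inside $W_1\times\cdots\times W_n$ is \emph{a fortiori} a chain of binary exchanges inside $A_1\times\cdots\times A_n$. For the reverse inclusion the key step I would isolate is a one-step stability property: if $(a_1,\dots,a_n)\in W_1\times\cdots\times W_n$ and $(a_1,\dots,a_n)\overset{i,j}{\sim}(b_1,\dots,b_n)$ in $A_1\times\cdots\times A_n$, then $(b_1,\dots,b_n)\in W_1\times\cdots\times W_n$ and the move is a binary $(i,j)$-exchange already inside $W_1\times\cdots\times W_n$. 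To prove it, note $b_k=a_k\in W_k$ for $k\neq i,j$, while $(a_i,a_j)\underset{A_i\times A_j}{\sim}(b_i,b_j)$ with $a_i\in W_i$, $a_j\in W_j$. The pairs $(A_i,A_j)$ and $(W_i,W_j)$ satisfy the hypotheses of Theorem 3.1 — the blanket assumption ``$W_i\cap A_j\subset W_j$ for all distinct $i,j$'' supplies both $W_i\cap A_j\subset W_j$ and $A_i\cap W_j\subset W_i$, and $W_i\in\SA(A_i)$, $W_j\in\SA(A_j)$ are given — so Theorem 3.2(a) applied to this pair gives $(b_i,b_j)\in W_i\times W_j$, and the proof of Theorem 3.1 applied to this pair gives $\EC_{A_i\times A_j}\vert W_i\times W_j=\EC_{W_i\times W_j}$, whence $(a_i,a_j)\underset{W_i\times W_j}{\sim}(b_i,b_j)$; this is exactly the stated stability property.

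Finally, given $(a_1,\dots,a_n),(b_1,\dots,b_n)\in W_1\times\cdots\times W_n$ with $(a_1,\dots,a_n)\underset{A_1\times\cdots\times A_n}{\sim}(b_1,\dots,b_n)$, I would fix a realizing chain of binary exchanges in $A_1\times\cdots\times A_n$ and apply the stability property inductively along it, starting from $(a_1,\dots,a_n)$: every tuple of the chain then lies in $W_1\times\cdots\times W_n$ and every step is a binary exchange there, so $(a_1,\dots,a_n)\underset{W_1\times\cdots\times W_n}{\sim}(b_1,\dots,b_n)$. This yields the required equality of exchange equivalences and hence the theorem. I do not expect a genuine obstacle here; the only thing to be careful about is that the $n$-tuple versions of the comparison map and of the criterion of Proposition 1.6 really do hold — but they follow from the identical pushout/functoriality arguments — so all the real content is carried by the two-variable facts $\EC_{A_i\times A_j}\vert W_i\times W_j=\EC_{W_i\times W_j}$ (Theorem 3.1) and ``$W_i\times W_j$ is a union of $\EC_{A_i\times A_j}$-classes'' (Theorem 3.2(a)).
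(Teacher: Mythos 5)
Your proof is correct and is the natural expansion of the paper's one-line proof, which relies on the same two-variable facts (a basic exchange in $A_i\times A_j$ with an endpoint in $W_i\times W_j$ has its other endpoint there and is a basic exchange there) and the same induction along a chain of binary exchanges. Two small notes. First, you say $(A_i,A_j)$ ``satisfies the hypotheses of Theorem 3.1'' and then invoke Theorem 3.2(a); but those hypotheses include that $(A_i,A_j)$ has $\AM_V$, which is not given and is not a formal consequence of $(A_1,\dots,A_n)$ having $\AM_V$. This does not break the argument, because the steps you actually extract from the proofs of Theorems 3.1 and 3.2(a) use only $W_i\in\SA(A_i)$, $W_j\in\SA(A_j)$ and the intersection condition $W_i\cap A_j=W_i\cap W_j$, not the $\AM_V$ hypothesis; you should therefore cite those steps of the proofs rather than the theorem statements themselves. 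Second, the detour through an $n$-tuple analogue of Proposition 1.6 is harmless but unnecessary: given $(w_1,\dots,w_n),(w_1',\dots,w_n')\in W_1\times\cdots\times W_n$ with equal sums, $\AM_V$ of $(A_1,\dots,A_n)$ produces a chain of binary exchanges in $A_1\times\cdots\times A_n$, and your stability lemma applied inductively along that chain already gives $(w_1,\dots,w_n)\underset{W_1\times\cdots\times W_n}{\sim}(w_1',\dots,w_n')$.
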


\begin{proof} Follows from the Definition \ref{def:4.1} of exchange equivalence and Theorem~\ref{thm:3.1}.
\end{proof}
\begin{thm}\label{thm:4.6} Under the assumption of Theorem \ref{thm:4.5} on $(A_1, \dots, A_n)$ and $(W_1, \dots, W_n)$ the following holds.
\begin{itemize}\dispace
\item[a)] If the  tuples of vectors $(a_1, \dots, a_n) \in A_1 \times \dots \times A_n$, $(w_1, \dots, w_n) \in W_1 \times \dots \times W_n$,  are exchange equivalent in $A_1 \times \dots \times A_n$, then $(a_1, \dots, a_n) \in W_1 \times \dots \times W_n$, and both tuples are exchange equivalence in $W_1 \times \dots \times W_n$.
\item[b)] $W_1 + \dots + W_n \in \SA (A_1 + \dots + A_n)$.
\end{itemize}
\end{thm}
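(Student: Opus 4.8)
The plan is to reduce Theorem~\ref{thm:4.6} to the already-proven binary case, Theorem~\ref{thm:3.2}, by an induction on $n$, exploiting the fact that an $n$-fold exchange equivalence is built up from binary $(i,j)$-exchanges. For part a), suppose $(a_1,\dots,a_n)\underset{A_1\times\cdots\times A_n}{\sim}(w_1,\dots,w_n)$ with $(w_1,\dots,w_n)\in W_1\times\cdots\times W_n$. I would walk along the connecting chain of binary exchanges \emph{backwards} from the $W$-tuple. At each step we have a binary $(i,j)$-exchange $(x_1,\dots,x_n)\overset{i,j}{\sim}(y_1,\dots,y_n)$ with $x_k=y_k$ for $k\neq i,j$, and by the time we reach this step (inducting along the chain) we already know $(y_1,\dots,y_n)\in W_1\times\cdots\times W_n$; since only the $i$-th and $j$-th coordinates change, and $(y_i,y_j)\underset{A_i\times A_j}{\sim}(x_i,x_j)$ with $(y_i,y_j)\in W_i\times W_j$, Theorem~\ref{thm:3.2}a) (applied to the pair $(A_i,A_j)$, $(W_i,W_j)$, whose hypotheses hold by the assumption $W_i\cap A_j=W_i\cap W_j$ and $W_j\cap A_i = W_i\cap W_j$, and using that $(A_i,A_j)$ has $\AM_V$ — which follows from Proposition~\ref{prop:1.6.} since $(A_1,\dots,A_n)$ has $\AM_V$, or more directly from Theorem~\ref{thm:4.5}) forces $(x_i,x_j)\in W_i\times W_j$, hence $(x_1,\dots,x_n)\in W_1\times\cdots\times W_n$. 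Propagating this all the way to $(a_1,\dots,a_n)$ gives the first assertion; moreover each such binary move, having both endpoints in $W_i\times W_j$, is by Theorem~\ref{thm:3.1} (equivalently the restriction statement $\EC_{A_i\times A_j}\vert W_i\times W_j = \EC_{W_i\times W_j}$) an exchange equivalence within $W_i\times W_j$, so the whole chain lies in $W_1\times\cdots\times W_n$, proving that $(a_1,\dots,a_n)$ and $(w_1,\dots,w_n)$ are exchange equivalent in $W_1\times\cdots\times W_n$.

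For part b), given $u,v\in A_1+\dots+A_n$ with $u+v\in W_1+\dots+W_n$, I would write $u=a_1+\dots+a_n$, $v=b_1+\dots+b_n$, $u+v=w_1+\dots+w_n$ with $a_k,b_k\in A_k$, $w_k\in W_k$. Then $(a_1+b_1)+\dots+(a_n+b_n)=w_1+\dots+w_n$, and since $(A_1,\dots,A_n)$ has $\AM_V$ the map $\kappa_{A_1,\dots,A_n}$ is injective, so $(a_1+b_1,\dots,a_n+b_n)\underset{A_1\times\cdots\times A_n}{\sim}(w_1,\dots,w_n)$. By part a) this yields $a_k+b_k\in W_k$ for every $k$; finally $W_k\in\SA(A_k)$ gives $a_k\in W_k$ and $b_k\in W_k$, whence $u=\sum a_k\in W_1+\dots+W_n$ and $v=\sum b_k\in W_1+\dots+W_n$, as desired.

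The only genuinely delicate point is bookkeeping in part a): one must be careful that along the backward induction the ``already known'' tuple really does lie in $W_1\times\cdots\times W_n$ before invoking Theorem~\ref{thm:3.2}a), and that the coordinates left untouched by a given binary move remain in their respective $W_k$ — but this is immediate since a binary $(i,j)$-exchange alters only coordinates $i$ and $j$. A secondary technical check is that the hypotheses of Theorems~\ref{thm:3.1}–\ref{thm:3.2} are met for each pair $(A_i,A_j)$: the symmetric condition $W_i\cap A_j=A_i\cap W_j=W_i\cap W_j$ follows from the stated hypothesis $W_i\cap A_j\subset W_j$ together with its counterpart obtained by swapping $i$ and $j$ (equivalently, from $W_i\cap A_j = W_i\cap W_j$ stated for all ordered pairs in Theorem~\ref{thm:4.5}), and $(A_i,A_j)$ has $\AM_V$ by Remark~\ref{rem:1.7}i) and Proposition~\ref{prop:1.6.}. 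With these observations in place the argument is entirely routine; no new idea beyond the binary case is required, only the combinatorial reduction of $n$-fold exchange equivalence to its binary constituents.
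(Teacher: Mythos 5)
Your argument is correct and follows essentially the same route as the paper: for a) you apply the binary result Theorem~\ref{thm:3.2}.a to each single $(i,j)$-exchange along the chain and propagate membership in $W_1\times\cdots\times W_n$ (the paper phrases this as ``any chain of binary exchanges which meets $W_1\times\cdots\times W_n$ is a chain of binary exchanges in $W_1\times\cdots\times W_n$'', which is exactly your backward induction made explicit), and for b) you reproduce the argument of Theorem~\ref{thm:3.2}.b mutatis mutandis, using injectivity of $\kappa_{A_1\times\cdots\times A_n}$ in place of $\pi_{A_1,A_2}$.

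One small blemish worth flagging: in part a) you pause to claim that $(A_i,A_j)$ has $\AM_V$ and cite Proposition~\ref{prop:1.6.} or Theorem~\ref{thm:4.5} as justification. Neither result actually yields this. Proposition~\ref{prop:1.6.} compares a sub-pair $(A_1',A_2')$ of a pair $(A_1,A_2)$, not a pair extracted from an $n$-tuple, and in any case is an equivalence, not a one-way implication; Theorem~\ref{thm:4.5} passes from $(A_1,\dots,A_n)$ to $(W_1,\dots,W_n)$, not to sub-pairs. Indeed it is not clear that amalgamation of an $n$-tuple descends to amalgamation of a constituent pair $(A_i,A_j)$. Fortunately this does not matter: although Theorem~\ref{thm:3.2} is formally stated ``under the assumptions of Theorem~\ref{thm:3.1}'', which nominally include $\AM_V$, the proof of part a) of Theorem~\ref{thm:3.2} makes no use of amalgamation at all --- it checks a single basic exchange and only uses $W_1\in\SA(A_1)$, $W_2\in\SA(A_2)$ and the intersection condition $W_1\cap A_2 = A_1\cap W_2 = W_1\cap W_2$, all of which you have correctly verified for the pair $(A_i,A_j)$, $(W_i,W_j)$. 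So your proof stands; just drop the $\AM_V$ aside (or note explicitly that it is unnecessary for the cited part of Theorem~\ref{thm:3.2}). Part b), where $\AM_V$ genuinely is used, invokes it only for the full $n$-tuple $(A_1,\dots,A_n)$, which is given.
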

\begin{proof} a): We know by Theorem \ref{thm:3.2}.a that,  if $(a_1, \dots, a_n) \overset{i, j}{\sim} (w_1, \dots, w_n)$ in $A_1 \times \dots \times A_n$, then $(a_1, \dots, a_n) \in A_1 \times \dots \times A_n$ and both tuples are $(i, j)$-equivalent in $W_1 \times \dots \times W_n$. Thus any chain of binary exchanges in  $A_1 \times \dots \times A_n$, which meets $W_1 \times \dots \times W_n$, is a chain of binary exchanges in $W_1 \times \dots \times W_n$.
\pSkip
b): The argument used in the proof of Theorem \ref{thm:3.2}.b works also here with the obvious changes. \end{proof}

We clarify the contents of Theorems \ref{thm:4.5} and \ref{thm:4.6}
in a special case.

\begin{cor}\label{cor:4.7}
 Assume that $A_1, \dots, A_n$ are submodules of $V$ and that $W$ is an SA-submodule of $A_1 + \dots + A_n$.
\begin{itemize} \dispace
\item[a)] If $(A_1, \dots, A_n)$ has amalgamation in $V$, then $(W \cap A_1, \dots, W \cap A_n)$ has  amalgamation in $V$.
\item[b)] Two $n$-tuples $(a_1, \dots, a_n)$, $(b_1, \dots, b_n)$ of $(W \cap A_1) \times \dots \times (W \cap A_n)$ are exchange equivalent in $A_1 \times \dots \times A_n$ iff these tuples are exchange equivalent in $(W \cap A_1) \times \dots \times (W \cap A_n)$.
\item[c)] $(W \cap A_1) \times \dots \times (W \cap A_n)$ is a union of $E_{A_1 \times \dots \times A_n}$ equivalence classes of $A_1 \times \dots \times A_n$.
\end{itemize}
\end{cor}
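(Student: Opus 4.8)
The plan is to deduce everything from \thmref{thm:4.5} and \thmref{thm:4.6} by taking $W_k := W \cap A_k$ for $k = 1, \dots, n$, so the only real work is to check that this tuple meets the hypotheses imposed there, namely that $W_k \in \SA(A_k)$ for each $k$ and that $W_i \cap A_j \subset W_j$ for all $i \ne j$.

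For the first, I would argue as follows: if $u, v \in A_k$ and $u + v \in W_k = W \cap A_k$, then, since $A_k \subset A_1 + \dots + A_n$ and $W$ is SA in $A_1 + \dots + A_n$, we get $u, v \in W$; together with $u, v \in A_k$ this yields $u, v \in W \cap A_k = W_k$, so $W_k \in \SA(A_k)$. For the second, simply compute $W_i \cap A_j = (W \cap A_i) \cap A_j = W \cap A_i \cap A_j \subset W \cap A_j = W_j$; the middle term is symmetric in $i$ and $j$, so in fact $W_i \cap A_j = W_i \cap W_j$, exactly as required.

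Once these are in place, part a) is literally the conclusion of \thmref{thm:4.5}, and part c) --- that $(W \cap A_1) \times \dots \times (W \cap A_n)$ is a union of $\EC_{A_1 \times \dots \times A_n}$-classes --- is the content of \thmref{thm:4.6}.a, which states that any tuple of $A_1 \times \dots \times A_n$ exchange equivalent to a tuple in $W_1 \times \dots \times W_n$ already lies in $W_1 \times \dots \times W_n$. For part b), the direction ``$\Leftarrow$'' is just monotonicity of the explicit exchange relation: a chain of basic $(i,j)$-exchanges in $W_1 \times \dots \times W_n$ uses elements $d \in W_i \cap W_j \subset A_i \cap A_j$, hence is also a chain of basic exchanges in $A_1 \times \dots \times A_n$ (cf.\ (1.8)); the direction ``$\Rightarrow$'' is again \thmref{thm:4.6}.a, which moreover asserts that the two tuples are then exchange equivalent within $W_1 \times \dots \times W_n$.

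I expect no genuine obstacle here: the corollary is a straight specialization, and the one point deserving attention is that $W$ is assumed SA in the \emph{whole} sum $A_1 + \dots + A_n$, which is precisely what lets $W \cap A_k$ inherit the SA property in $A_k$; the rest is the elementary manipulation of intersections recorded above.
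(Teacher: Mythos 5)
Your proposal is correct and follows exactly the paper's route: the paper's proof is the one-line instruction to apply Theorems \ref{thm:4.5} and \ref{thm:4.6} with $W_i := W \cap A_i$, and your verification of the hypotheses ($W \cap A_k \in \SA(A_k)$ since $W$ is SA in the whole sum, and $W_i \cap A_j = W \cap A_i \cap A_j \subset W_j$) is precisely the routine checking the paper leaves implicit.
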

\begin{proof} Apply Theorems \ref{thm:4.5} and \ref{thm:4.6} to $(A_1, \dots, A_n)$ and $W_i : = W \cap A_i$ $(1 \leq i \leq n)$.
\end{proof}

For later use we  restate Theorem \ref{thm:4.3}  more categorically.
This is possible, since dividing out an $R$-linear equivalence
relation on an $R$-module $X$ means to establish a surjective
homomorphism $\varphi : X \twoheadrightarrow Z$, which is unique up
to composition with an isomorphism $\delta: Z \isoto
 Z'$ replacing $\varphi$ by $\delta
\circ \varphi$.

\begin{defn}\label{def:4.8} Let $A_1, \dots, A_n \in \Mod (V)$. The \textbf{amalgamation diagram} of $(A_1, \dots, A_n)$ (in $V$) is the diagram of $R$-module homomorphisms consisting of all inclusion maps $A_i \cap A_j \hookrightarrow  A_i$ for different $i, j \in [1, n ]$ (in $V$). \end{defn}

\begin{schol}\label{schol:4.9} A tuple
$(A_1, \dots, A_n)$ has amalgamation in $V$ iff the amalgamation
diagram of $(A_1, \dots, A_n)$ in $V$ admits the submodule $A_1 +
\dots + A_n$ of $V$ as colimit with the inclusion maps $A_i
\hookrightarrow A_1 + \dots + A_n$ $(1 \leq i \leq n)$ as morphisms.
More explicitly, given $R$-linear maps $\alpha_i : A_i
\rightarrow Z$ $(1 \leq i \leq n)$ into some $R$-module $Z$ with
$\alpha_i \vert A_i \cap A_j = \alpha_j \vert A_i \cap A_j$ for $i
\ne j$, there exists a (unique) $R$-linear map $\gamma : A_1 + \dots
+ A_n \rightarrow Z$ such that $\gamma \vert A_i = \alpha_i$ for
$1 \leq i \leq n$. \end{schol}

\section{Formal properties of multiple exchange equivalence and amalgamation}\label{sec:5}

The following fact is trivial but useful.

\begin{prop}\label{prop:5.1}  Let  $A_1, \dots, A_n \in \Mod (V)$, $n \geq 2$, be given and let $\sigma$ be a permutation of $[1, n] : = \{ 1, \dots, n \}$.
\begin{itemize}
\item[a)] If $(a_1, \dots, a_n)$, $(b_1, \dots, b_n)$ are $n$-tuples in $A_1 \times \dots \times A_n$, then \\
$(a_1, \dots, a_n) \underset{A_1 \times \dots \times A_n}{\sim} (b_1, \dots, b_n)\quad \text{iff}\quad
(a_{\sigma (1)}, \dots, a_{\sigma (n)}) \underset{A_{\sigma (1)}
\times \dots \times A_{\sigma (n)}}{\sim} (b_{\sigma (1)}, \dots,
b_{\sigma (n)}).$
\item[b)] The tuple $(A_1, \dots, A_n)$ has amalgamation in $V$ iff $A_{\sigma (1)} \times \dots \times A_{\sigma (n)}$ has amalgamation in $V$.
\end{itemize}
\end{prop}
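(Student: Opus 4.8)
The plan is to reduce everything to the case of transpositions, since every permutation of $[1,n]$ is a product of adjacent (or arbitrary) transpositions and both statements are manifestly transitive in $\sigma$: if the equivalence of relations holds for $\sigma$ and for $\tau$, it holds for $\sigma\tau$, and likewise for the ``has amalgamation'' assertion. So it suffices to prove a) and b) when $\sigma$ is a single transposition $(r\,s)$ with $r\neq s$.

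For part a) with $\sigma=(r\,s)$, I would simply inspect the definition of exchange equivalence in $A_1\times\dots\times A_n$ (Definition~\ref{def:4.1}). A chain of binary exchanges connecting $(a_1,\dots,a_n)$ to $(b_1,\dots,b_n)$ consists of steps each of which is an $(i,j)$-exchange for some pair $i\neq j$. Applying $\sigma$ to every tuple in the chain term by term turns each $(i,j)$-exchange into a $(\sigma(i),\sigma(j))$-exchange in $A_{\sigma(1)}\times\dots\times A_{\sigma(n)}$: the two coordinates $i,j$ that get modified become the coordinates $\sigma(i),\sigma(j)$, the exchange relation $(a_i,a_j)\sim_{A_i\times A_j}(b_i,b_j)$ is untouched (only relabeled), and the coordinates fixed by the step stay fixed. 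Thus $\sigma$ carries a chain to a chain, giving the implication ``$\Rightarrow$''; applying $\sigma^{-1}$ gives the converse, so we get the stated ``iff''.

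Part b) is then immediate from a). By definition $(A_1,\dots,A_n)$ has $\AM_V$ iff $\kappa_{A_1,\dots,A_n}$ is injective, i.e.\ iff for all tuples $(a_i),(b_i)$ one has $\sum a_i=\sum b_i \Rightarrow (a_1,\dots,a_n)\underset{A_1\times\dots\times A_n}{\sim}(b_1,\dots,b_n)$ (the reverse implication always holds since $\kappa$ is well defined). Both the hypothesis $\sum a_i=\sum b_i$ and the conclusion are invariant under reindexing by $\sigma$: the sum is unchanged because addition in $V$ is commutative, and the conclusion transforms correctly by part a). Hence the injectivity criterion for $(A_1,\dots,A_n)$ is equivalent to that for $(A_{\sigma(1)},\dots,A_{\sigma(n)})$.

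There is essentially no obstacle here; the only thing to be careful about is the bookkeeping in part a)—making sure that relabeling an $(i,j)$-exchange really does produce a legitimate $(\sigma(i),\sigma(j))$-exchange, which comes down to the observation that the defining condition of a binary $(i,j)$-exchange only refers to the two coordinates $i,j$ (via the already-established case $n=2$) and to equality of all other coordinates, both of which are preserved under a bijective relabeling of the index set. Since permutations are generated by transpositions and the claims are closed under composition of permutations, handling a single transposition suffices, though in fact the argument above works verbatim for an arbitrary $\sigma$ without the reduction.
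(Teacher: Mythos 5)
Your argument is correct, and it differs in formal route (though not in spirit) from the paper's. The paper disposes of both parts in one line by observing that the amalgamation diagram of $(A_{\sigma(1)},\dots,A_{\sigma(n)})$ is literally the diagram of $(A_1,\dots,A_n)$ relabeled by $\sigma$, so the colimit characterization of $\AM_V$ (Scholium~\ref{schol:4.9}) and the characterization of $\EC_{A_1\times\dots\times A_n}$ as the finest additive relation with the prescribed identifications (Theorem~\ref{thm:4.3}) are both manifestly permutation-invariant. You instead work at the level of the explicit Definition~\ref{def:4.1}: relabel each tuple in a chain of binary exchanges, then deduce b) from the injectivity criterion for $\kappa$ together with commutativity of addition in $V$. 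Your version is more self-contained (it never needs the categorical reformulation), at the cost of the index bookkeeping; the paper's version is shorter because the diagram formulation absorbs that bookkeeping. Two small remarks on your bookkeeping: the reduction to transpositions is, as you yourself note, superfluous, and in the general case an $(i,j)$-exchange in the original indexing becomes a $(\sigma^{-1}(i),\sigma^{-1}(j))$-exchange in the permuted product (the coordinate of the new tuple carrying $a_i$ is $\sigma^{-1}(i)$, since the $k$-th entry is $a_{\sigma(k)}$); for a transposition $\sigma=\sigma^{-1}$ this is invisible, which is why your statement reads correctly there. Also, if one is fussy about the order within the exchanged pair, Remark~\ref{rem:1.7}(i) (or the fact that Definition~\ref{def:4.1} does not require $i<j$) covers the swap. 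Neither point is a gap.
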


\begin{proof} Evident, since the amalgamation diagram of $(A_{\sigma (1)},  \dots , A_{\sigma (n)})$ arises from the amalgamation diagram of $(A_1, \dots, A_n)$ simply by relabeling the $A_i$ using $\sigma$.
\end{proof}

We briefly discuss the functorial behaviour of exchange equivalence and amalgamation under a module homomorphism.

\begin{prop}\label{prop:5.2}  Assume that $\varphi$ is a linear map from an $R$-module $V'$ to $V$, and that $A_1, \dots, A_n$ are submodules of $V$.
\begin{itemize}
\item[a)] If $(a'_1, \dots, a'_n)$ and $(b'_1, \dots, b'_n)$ are tuples in $\varphi^{-1} (A_1) \times \dots \times \varphi^{-1} (A_n)$ with
\begin{equation}\label{eq:5.1}
 (a'_1, \dots, a'_n) \underset{\varphi^{-1} (A_1) \times \dots \times \varphi^{-1} (A_n)}{\sim} (b'_1, \dots , b'_n), \end{equation}
then
\begin{equation}\label{eq:5.2}
 (\varphi (a'_1), \dots, \varphi (a'_n)) \underset{A_1 \times \dots \times A_n}{\sim} (\varphi (b'_1), \dots, \varphi (b'_n)). \end{equation}

\item[b)] Assume that $\varphi$ is injective on $\varphi^{-1} (A_1 + \dots + A_n)$, then \eqref{eq:5.2} implies~\eqref{eq:5.1}. Then  $(\varphi^{-1} (A_1), \dots , \varphi^{-1} (A_n))$ has amalgamation in $V'$ iff $(A_1, \dots, A_n)$ has amalgamation in $V$.
\end{itemize}
\end{prop}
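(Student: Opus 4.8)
The plan is to get (a) as a push-forward of chains of basic exchanges along $\varphi$, and to deduce (b) from (a) together with the injectivity hypothesis --- first the implication \eqref{eq:5.2}$\Rightarrow$\eqref{eq:5.1}, and then, by a diagram chase, the equivalence of the two amalgamation properties. For (a) I would first reduce to a single binary exchange: by Definition~\ref{def:4.1}(b), \eqref{eq:5.1} provides a finite chain inside $\varphi^{-1}(A_1)\times\dots\times\varphi^{-1}(A_n)$ with consecutive members binary exchange equivalent, and since every member of this chain is itself a tuple of $\varphi^{-1}(A_1)\times\dots\times\varphi^{-1}(A_n)$, an induction on its length reduces the claim to the case where $(a'_1,\dots,a'_n)$ and $(b'_1,\dots,b'_n)$ are $(i,j)$-equivalent. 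There $a'_k=b'_k$, hence $\varphi(a'_k)=\varphi(b'_k)$, for $k\ne i,j$, while $(a'_i,a'_j)$ is joined to $(b'_i,b'_j)$ inside $\varphi^{-1}(A_i)\times\varphi^{-1}(A_j)$ by a chain of basic $(1,2)$- and $(2,1)$-exchanges of elements $d\in\varphi^{-1}(A_i)\cap\varphi^{-1}(A_j)=\varphi^{-1}(A_i\cap A_j)$. As $\varphi$ is $R$-linear, applying it componentwise carries a basic exchange of $d$ to a basic exchange of $\varphi(d)\in A_i\cap A_j$ (this is Remark~\ref{rem:1.7}(iii)); the chain is thus carried to one in $\varphi(\varphi^{-1}(A_i))\times\varphi(\varphi^{-1}(A_j))$, and since $\varphi(\varphi^{-1}(A_k))\subseteq A_k$ the relation only coarsens (the $n$-variable analogue of \eqref{eq:1.8}), which gives \eqref{eq:5.2}.

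For the implication \eqref{eq:5.2}$\Rightarrow$\eqref{eq:5.1} in (b), put $U:=\varphi^{-1}(A_1+\dots+A_n)$. The hypothesis says $\varphi$ restricts to an isomorphism of $U$ onto the submodule $W:=\Im\varphi\cap(A_1+\dots+A_n)$ of $A_1+\dots+A_n$, an isomorphism carrying $\varphi^{-1}(A_k)$ onto $B_k:=A_k\cap\Im\varphi\subseteq W$ (note $\varphi^{-1}(A_k)\subseteq U$, as $A_k\subseteq A_1+\dots+A_n$). Applying (a) to $\varphi|_U$ and to its inverse, \eqref{eq:5.1} is equivalent to exchange equivalence of $(\varphi(a'_1),\dots,\varphi(a'_n))$ and $(\varphi(b'_1),\dots,\varphi(b'_n))$ inside $B_1\times\dots\times B_n$; so, since \eqref{eq:5.1}$\Rightarrow$\eqref{eq:5.2} always holds, the first claim of (b) reduces to showing that $\EC_{A_1\times\dots\times A_n}$ restricted to $B_1\times\dots\times B_n$ coincides with $\EC_{B_1\times\dots\times B_n}$, one inclusion being the $n$-variable analogue of \eqref{eq:1.8}. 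This is the crux, and the step I expect to be the main obstacle. I would attack it by taking a chain of basic exchanges in $A_1\times\dots\times A_n$ joining two tuples of $B_1\times\dots\times B_n$ and rewriting it, after a suitable normalization, into one all of whose split elements and intermediate terms lie in $W$: once a split element $d\in A_i\cap A_j$ occurring in the chain is realized as $\varphi(\delta)$ with $\delta\in\varphi^{-1}(A_i\cap A_j)=\varphi^{-1}(A_i)\cap\varphi^{-1}(A_j)$, the equalities defining that basic exchange can be pulled back bijectively through the injection $\varphi|_U$, yielding a basic exchange of $\delta$ in $\varphi^{-1}(A_i)\times\varphi^{-1}(A_j)$, and concatenation gives \eqref{eq:5.1}. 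The delicacy is that a priori such a $d$ need not lie in $\Im\varphi$, so the normalization must be arranged so as to force all intermediate terms of the chain --- and hence the relevant split data --- into $W$; this is exactly where the injectivity of $\varphi$ on $\varphi^{-1}(A_1+\dots+A_n)$ enters in an essential way, and getting the normalization right is the delicate point of the whole argument.

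Granting \eqref{eq:5.1}$\Leftrightarrow$\eqref{eq:5.2}, the amalgamation statement is formal. By (a) and Scholium~\ref{schol:4.9}, the inclusions $\varphi^{-1}(A_k)\hookrightarrow A_k$ induce a commuting square whose top arrow is $\bar\varphi\colon\varphi^{-1}(A_1)\infty_{V'}\cdots\infty_{V'}\varphi^{-1}(A_n)\to A_1\infty_V\cdots\infty_V A_n$, $[a'_1,\dots,a'_n]\mapsto[\varphi(a'_1),\dots,\varphi(a'_n)]$, whose vertical arrows are the surjections $\kappa_{\varphi^{-1}(A_1),\dots,\varphi^{-1}(A_n)}$ and $\kappa_{A_1,\dots,A_n}$ of \eqref{eq:4.1}, and whose bottom arrow is the restriction of $\varphi$ --- injective, since $\varphi^{-1}(A_1)+\cdots+\varphi^{-1}(A_n)\subseteq U$; by \eqref{eq:5.1}$\Leftrightarrow$\eqref{eq:5.2}, $\bar\varphi$ is injective as well. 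Since the bottom arrow and $\bar\varphi$ are injective, $\kappa_{\varphi^{-1}(A_1),\dots,\varphi^{-1}(A_n)}$ is injective if and only if $\kappa_{A_1,\dots,A_n}\circ\bar\varphi=\varphi\circ\kappa_{\varphi^{-1}(A_1),\dots,\varphi^{-1}(A_n)}$ is; combined with the injectivity of $\bar\varphi$, a short chase around the square --- or, equivalently, comparing Definition~\ref{def:1.5} in the form \eqref{eq:1.5} on each side, using that $\varphi$ injective on $U$ makes $\sum_k a'_k=\sum_k b'_k$ equivalent to $\sum_k\varphi(a'_k)=\sum_k\varphi(b'_k)$ --- then gives that $(\varphi^{-1}(A_1),\dots,\varphi^{-1}(A_n))$ has $\AM_{V'}$ if and only if $(A_1,\dots,A_n)$ has $\AM_V$. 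The only non-routine ingredient is thus the chain-rewriting lemma inside (b); everything else is functoriality of exchange equivalence and unwinding of definitions.
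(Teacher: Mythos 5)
Your part (a), and your closing diagram argument for the amalgamation equivalence, both match the paper's proof. The real issue is the step you single out as ``the crux,'' and you are right to worry about it, but not for the reason you expect. After reducing to $U := \varphi^{-1}(A_1+\dots+A_n)$, the paper's proof of (b) simply asserts that $\varphi$ now restricts to an \emph{isomorphism} of $U$ \emph{onto} $A_1+\dots+A_n$; that requires $A_1+\dots+A_n \subset \Im\varphi$, a hypothesis that is never written down. Granted it, the rest of (b) is a two-line corollary of (a) applied to the restricted $\varphi$ and to its inverse (which now carries $\varphi^{-1}(A_k)$ onto $A_k$, not merely onto $A_k\cap\Im\varphi$), and your final commuting-square chase finishes exactly as in the paper. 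No normalization of chains is needed at all. Your observation that $W := \Im\varphi\cap(A_1+\dots+A_n)$ need not equal $A_1+\dots+A_n$ is precisely the point the paper glosses over.

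The ``chain-rewriting lemma'' you hope to prove to bridge that gap --- that $\EC_{A_1\times\dots\times A_n}$ restricted to $B_1\times\dots\times B_n$ coincides with $\EC_{B_1\times\dots\times B_n}$ --- cannot be established by any normalization, because without the surjectivity hypothesis part (b) is false. Take $R=\N_0$, $V=\N_0^2$, $A_1=\{(x,y)\in\N_0^2 : x\geq y\}$, $A_2=\{(x,y)\in\N_0^2 : y\geq x\}$, so that $A_1\cap A_2$ is the diagonal and $A_1+A_2=V$. The pairs $\bigl((1,0),(0,1)\bigr)$ and $\bigl((0,0),(1,1)\bigr)$ in $A_1\times A_2$ have the same sum $(1,1)$ but cannot be joined by exchanges of diagonal elements (subtraction in $\N_0^2$ forces every such exchange to be trivial), so $(A_1,A_2)$ does not have $\AM_V$. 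Now put $V'=\{0\}$ and $\varphi=0$: then $\varphi$ is (vacuously) injective on $\varphi^{-1}(A_1+A_2)=\{0\}$, while $(\varphi^{-1}(A_1),\varphi^{-1}(A_2))=(\{0\},\{0\})$ trivially has $\AM_{V'}$, so the ``iff'' of (b) would force $(A_1,A_2)$ to have $\AM_V$ --- a contradiction. So the cure is not a cleverer chain argument but an extra hypothesis such as $A_1+\dots+A_n\subset\Im\varphi$ (or, alternatively, replacing $(A_1,\dots,A_n)$ by $(A_1\cap\Im\varphi,\dots,A_n\cap\Im\varphi)$ in the conclusion); under either fix the paper's short reduction works and your diagram chase closes the argument.
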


\begin{proof} a): Evident from the description of the exchange equivalence relation in Definitions~ \ref{def:1.2} and \ref{def:4.1}.\pSkip
b): Without loss of generality we may replace the $R$-modules $V$
and $V'$ by $A_1 + \dots + A_n$ and $\varphi^{-1} (A_1 + \dots +
A_n)$. Now $\varphi$ is an isomorphism from $V'$ onto $V$ and
$\varphi^{-1} (A_1 + \dots + A_n) = \varphi^{-1} (A_1) + \dots +
\varphi^{-1} (A_n)$, whence $\varphi$ provides an isomorphism
between the amalgamation diagrams of $(\varphi^{-1} (A_1), \dots,
\varphi^{-1} (A_n))$ in $V'$ and $(A_1, \dots, A_n)$ in $V$. Due to
step a), applied to $\varphi$ and $\varphi^{-1}$, it is now obvious
that for fixed tuples $(a'_1, \dots, a'_n)$ and $(b'_1, \dots ,
b'_n)$ in $\varphi^{-1} (A_1) \times \dots \times \varphi^{-1}
(A_n)$ the assertions \eqref{eq:5.1} and \eqref{eq:5.2} are
equivalent. We furthermore have a commuting square
    \begin{equation*}\label{eq:5.3}
    \xymatrix{
\varphi^{-1} (A_1) \infty \cdots \infty \varphi^{-1} (A_n)  \ar@{->}[d]^{\varphi_1 \infty \cdots \infty \varphi_n}     \ar@{->}[rr]^{\qquad \kappa'} & & V' \ar@{->}[d]^{\vrp} \\
   A_1 \infty \cdots \infty A_n \ar@{->}[rr]   &  &  V
   }
  \end{equation*}
with $\kappa':= \kappa_{\varphi^{-1} (A_1) \times \cdots \times \varphi^{-1} (A_n)}$, $\kappa : = \kappa_{A_1} \times \cdots \times \kappa_{A_n}$ (cf. \eqref{eq:4.2}) and $\varphi_1 \infty \cdots \infty \varphi_n$ the isomorphism from $\varphi^{-1} (A_1) \infty \cdots \infty \varphi^{-1} (A_n)$ to $A_1 \infty \cdots \infty A_n$ induced by the isomorphisms $\varphi_i : \varphi^{-1} (A_i) \isoto A_i$ obtained from $\varphi$ by restriction. Since the vertical arrows in this diagram are isomorphisms, the map $\kappa'$ is an isomorphism iff $\kappa$ is an isomorphism. This means that $(\varphi^{-1} (A_1), \dots, \varphi^{-1} (A_n))$ has amalgamation in $V'$ iff $(A_1, \dots, A_n)$ has amalgamation in $V$. \end{proof}

We now delve deeper into the theory of amalgamation.

\begin{thm}\label{thm:5.3} Let $(A_1, \dots, A_n)$ be an $n$-tuple of submodules of $V$ and $A_0$ a submodule of ~$A_1$.
\begin{itemize}\dispace
\item[a)] If $(A_1, \dots, A_n)$ has $\AM_V$, then $(A_0, A_1, \dots, A_n)$ has $\AM_V$.
\item[b)] Given tuples $(a_0, a_1, \dots, a_n)$, $(b_0, b_1, \dots, b_n)$ in $A_0 \times \dots \times A_n$ we have
    $$
    \begin{array}{cl}
          (a_0, a_1, \dots, a_n) \underset{A_0 \times \dots \times A_n}{\sim} (b_0, b_1, \dots , b_n) & \Leftrightarrow \\ (a_0 + a_1, a_2, \dots, a_n) \underset{A_1 \times \dots \times A_n}{\sim} (b_0 + b_1, b_2, \dots, b_n). &
       \end{array}
 $$
\end{itemize}
\end{thm}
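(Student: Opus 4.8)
The plan is to reduce everything to the binary/multiple exchange machinery already established. For part (a), the amalgamation diagram of $(A_0, A_1, \dots, A_n)$ differs from that of $(A_1, \dots, A_n)$ only by the extra node $A_0 \cap A_1 = A_0$ (since $A_0 \subset A_1$), together with the inclusions $A_0 \hookrightarrow A_1$ and $A_0 \cap A_j \hookrightarrow A_0$. By Scholium \ref{schol:4.9}, it suffices to check that $A_0 + A_1 + \dots + A_n = A_1 + \dots + A_n$ is a colimit of the enlarged diagram. Given $R$-linear maps $\alpha_0 \colon A_0 \to Z$ and $\alpha_i \colon A_i \to Z$ compatible on all pairwise intersections, the compatibility $\alpha_0 \vert A_0 \cap A_1 = \alpha_1 \vert A_0 \cap A_1$ forces $\alpha_0 = \alpha_1 \vert A_0$; hence the $\alpha_i$ $(1 \le i \le n)$ alone already satisfy the hypotheses of the colimit property for $(A_1, \dots, A_n)$, which holds by assumption, producing the unique $\gamma \colon A_1 + \dots + A_n \to Z$ with $\gamma \vert A_i = \alpha_i$; and automatically $\gamma \vert A_0 = \alpha_1 \vert A_0 = \alpha_0$. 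This gives (a). Alternatively, and perhaps more in the spirit of the explicit treatment, one can deduce (a) directly from (b): if $(a_0, a_1, \dots, a_n)$ and $(b_0, b_1, \dots, b_n)$ have equal sum, then $(a_0 + a_1, a_2, \dots, a_n)$ and $(b_0 + b_1, b_2, \dots, b_n)$ have equal sum in $A_1 \times \dots \times A_n$, hence are exchange equivalent there since $(A_1, \dots, A_n)$ has $\AM_V$, hence by (b) the original tuples are exchange equivalent in $A_0 \times \dots \times A_n$.

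For part (b), both directions are handled by the explicit chain description of exchange equivalence. The implication ``$\Rightarrow$'' is the easier one: apply the additive map $A_0 \times A_1 \times \dots \times A_n \to A_1 \times A_1 \times \dots \times A_n$ that merges the first two coordinates by addition, and observe (via Definition \ref{def:4.1}) that each binary basic exchange in the top product maps to a binary basic exchange, or a trivial one, in the bottom product — the only exchanges touching coordinate $0$ are $(0,j)$-exchanges with some $d \in A_0 \cap A_j \subset A_1 \cap A_j$, and such an exchange, after merging coordinates $0$ and $1$, becomes a $(1,j)$-exchange (coordinate $1$ now carries $a_0 + a_1$, and $d$ is simply transferred between coordinate $1$ and coordinate $j$). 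One must check the two sub-cases where $d$ is split off from coordinate $0$ versus added to it; both go through since $a_0 \pm d$ together with $a_1$ merges to $(a_0 + a_1) \pm d$. Exchanges not involving coordinate $0$ descend verbatim.

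The converse ``$\Leftarrow$'' is the main obstacle, since we must lift a chain living in $A_1 \times \dots \times A_n$ (with first coordinate in the full $A_1$) to a chain in $A_0 \times \dots \times A_n$ (first coordinate constrained to $A_0$). The key observation is that throughout any chain connecting $(a_0 + a_1, a_2, \dots, a_n)$ to $(b_0 + b_1, b_2, \dots, b_n)$, the first coordinate may be written as $a_0 + (\text{something in } A_1)$: starting from $a_0 + a_1$ we carry the ``$a_0$-part'' along formally. Concretely, I would argue that if $(a_0 + a_1, a_2, \dots, a_n) \sim (b_0 + b_1, b_2, \dots, b_n)$ in $A_1 \times \dots \times A_n$, then by additivity (Proposition \ref{prop:4.2}) we may add the constant relation $(a_0, 0, \dots, 0) \sim (a_0, 0, \dots, 0)$ and reorganize so that the $A_0$-summand $a_0$ is always kept separate, never participating in an exchange; formally one produces the lifted chain by replacing the first coordinate $x_1$ of each intermediate tuple by the pair $(a_0, x_1 - \text{(accumulated transfers)})$ — but since subtraction is unavailable in a semiring, this must instead be done constructively: track the chain step by step, maintaining the invariant that the current first coordinate equals $a_0 + c$ for an explicit $c \in A_1$ obtained from the partial chain, and lift each step to the corresponding step on $(a_0, c, a_2, \dots, a_n)$. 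A $(1,j)$-exchange of some $d \in A_1 \cap A_j$ transferring $d$ from coordinate $1$ to coordinate $j$ simply becomes the same exchange on the $c$-coordinate (leaving $a_0$ untouched), unless it would require removing $d$ from the $A_0$-part — but that situation does not arise because the invariant says the $A_0$-part is exactly the fixed element $a_0$, and $d$ is drawn from the freely varying $A_1$-part $c$. (If a step genuinely needs to borrow from $a_0$, one instead does nothing to the $a_0$-coordinate and adjusts $c$ by the LZS-free bookkeeping; since the net effect over the whole chain returns the first coordinate to $b_0 + b_1$, the terminal tuple is $(a_0 + \text{(stuff)} , \dots)$ and one reshuffles once more at the end via a basic exchange of the $A_0$-valued difference — which lies in $A_0 \cap A_1$.) Carrying this bookkeeping out carefully is the technical heart of the proof; I expect it to occupy most of the write-up, while (a) and the forward direction of (b) are short.
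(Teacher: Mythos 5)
Your part a) (the colimit argument via Scholium \ref{schol:4.9}) is exactly the paper's proof, and your forward direction of b) (merge the coordinates $0$ and $1$ and check that each binary exchange descends to an exchange or a trivial step) is correct. The genuine gap is in the backward direction of b), which you yourself flag as the technical heart. The invariant you propose --- that along the lifted chain the first coordinate always has the form $a_0+c$ with $c\in A_1$ tracked explicitly --- cannot be maintained: a $(1,j)$-exchange in the given chain writes the current first coordinate as $x_1'+d$ with $d\in A_1\cap A_j$, and in a semiring there is no cancellation, so there is in general no decomposition $c=c'+d$ compatible with $x_1=a_0+c$ (already in $V=\mathbb{N}_0$ the split-off summand $d$ can ``eat into'' $a_0$). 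Your proposed patch --- leave $a_0$ untouched, ``adjust $c$ by the LZS-free bookkeeping'', and repair the discrepancy at the end ``via a basic exchange of the $A_0$-valued difference'' --- is not meaningful here: no difference exists, and nothing guarantees the would-be correction lies in $A_0\cap A_1$. As written, the $\Leftarrow$ direction is not proved, and since your alternative derivation of a) from b) relies on it, that route is also incomplete.

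Two ways to close the gap. The paper's way is to avoid chains altogether: a) is proved for an arbitrary ambient module, so apply it with ambient module $W:=A_1\infty_V\cdots\infty_V A_n$, in which $(A_1,\dots,A_n)$ tautologically has amalgamation (and the pairwise intersections are unchanged, since $\pi_{A_1,\dots,A_n}\circ j_i=\mathrm{id}_{A_i}$); then $(A_0,A_1,\dots,A_n)$ has $\AM_W$ and $A_0\infty\cdots\infty A_n=A_1\infty\cdots\infty A_n$, so each exchange equivalence in b) is equivalent to equality of images in $W$, and $(a_0,a_1,\dots,a_n)$ and $(a_0+a_1,a_2,\dots,a_n)$ have the same image there. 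Alternatively, a one-line repair of your chain argument: by a single $(0,1)$-exchange of $d=a_0\in A_0\cap A_1=A_0$ one has
$(a_0,a_1,a_2,\dots,a_n)\underset{A_0\times\cdots\times A_n}{\sim}(0,a_0+a_1,a_2,\dots,a_n)$,
and likewise for the $b$'s; then any chain in $A_1\times\cdots\times A_n$ from $(a_0+a_1,a_2,\dots,a_n)$ to $(b_0+b_1,b_2,\dots,b_n)$ lifts verbatim to $A_0\times A_1\times\cdots\times A_n$ with the $A_0$-coordinate frozen at $0$. There is no need to carry $a_0$ through the chain at all, which is precisely where your bookkeeping breaks down.
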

\begin{proof} a): Suppose $R$-module homomorphisms $\alpha_i : A_i \rightarrow Z$ $(0 \leq i \leq n)$ are given with $\alpha_i \vert A_i \cap A_j = \alpha_j \vert A_i \cap A_j$ $(0 \leq i, j \leq n)$. Then (exploiting this for $1 \leq i, j \leq n)$ we have a (unique) $R$-module homomorphism $\gamma : A_1 + \dots + A_n \rightarrow Z$ with $\gamma \vert A_i = \alpha_i$ for $1 \leq i \leq n$. Since $A_0 \cap A_1 = A_0$ we have $\alpha_0 = \alpha_1 \vert A_0$. Since $A_0 \cap A_i \subset A_i$ for $i \geq 2$, we have $\alpha_0 \vert A_0 \cap A_i = \alpha_1 \vert A_0 \cap A_i = \alpha_i \vert A_0 \cap A_i$. Thus $A_0 + A_1 + \dots + A_n = A_1 + \dots + A_n$ is the colimit of the amalgamation diagram of $(A_0, A_1, \dots, A_n)$ in $V$ (with the obvious inclusion morphisms).
\pSkip b): $(A_1, \dots, A_n)$ has $\AM$ in $A_1 \infty \cdots
\infty A_n$,  so $(A_0, A_1, \dots, A_n)$ also has  $\AM$ in $A_1
\infty \cdots \infty A_n$, and $A_0 \infty A_1 \infty \cdots \infty
A_n = A_1 \infty \cdots \infty A_n$. Now b) is obvious, since the
inclusion maps of the $A_i$ into $A_1 \infty \cdots \infty A_n = A_0
\infty \cdots \infty A_n$ give us $E_{A_1 \times \dots \times A_n}$
and $E_{A_0 \times A_1 \times \dots \times A_n}$, and $(a_0, a_1,
\dots, a_n)$, $(b_0, b_1, \dots, b_n)$ have the same images as $(a_0
+ a_1, a_2, \dots, a_n)$, $(b_0 + b_1, b_2, \dots, b_n)$
respectively in $A_1 \infty \cdots \infty A_n$.
\pSkip
In short, b) follows from a) by working in $A_1 \infty \cdots \infty A_n$ instead of $A_1 + \dots + A_n$.
\end{proof}

Given a sequence $A_1, \dots, A_n$ of submodules of $V$, as before,  we define for any set $J \subset [1, n]$
\begin{equation}\label{eq:5.4}
A_J : = \sum\limits_{i \in J} A_i . \end{equation}

\begin{thm}[Contraction Theorem]\label{thm:5.4} Assume that $(A_1, \dots, A_n)$ has amalgamation in $V$, and that
\begin{equation}\label{eq:5.5}
 [1, n] = J_1 \ \dot{\cup} \ds\cdots \dot{\cup} \ J_m \end{equation}
is a partition of $[1, n]$ into $m$ (disjoint) subsets. Then $(A_{J_1}, \dots, A_{J_m})$ has amalgamation~ in ~$V$.
\end{thm}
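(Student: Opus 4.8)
The plan is to prove this through the colimit characterization of amalgamation in Scholium~\ref{schol:4.9}, rather than by manipulating exchange chains directly. Since the $J_p$ partition $[1,n]$ one has $A_{J_1}+\dots+A_{J_m}=A_1+\dots+A_n=:V'$, and the inclusions $A_{J_p}\hookrightarrow V'$ plainly form a cocone over the amalgamation diagram of $(A_{J_1},\dots,A_{J_m})$ (for $p\ne q$ the two ways of mapping $A_{J_p}\cap A_{J_q}$ into $V'$, through $A_{J_p}$ or through $A_{J_q}$, both give the inclusion). By Scholium~\ref{schol:4.9} it therefore suffices to show this cocone is universal; granting that, $(A_{J_1},\dots,A_{J_m})$ has $\AM_V$.

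To check universality I would start from an arbitrary test cocone: $R$-linear maps $\beta_p:A_{J_p}\to Z$ $(1\le p\le m)$ with $\beta_p\vert A_{J_p}\cap A_{J_q}=\beta_q\vert A_{J_p}\cap A_{J_q}$ for $p\ne q$. For $i\in[1,n]$ write $p(i)$ for the block index with $i\in J_{p(i)}$ and set $\alpha_i:=\beta_{p(i)}\vert A_i$. The one genuine point is that $(\alpha_i)_{i\in[1,n]}$ is compatible on pairwise intersections, i.e., it forms a cocone over the amalgamation diagram of $(A_1,\dots,A_n)$: if $p(i)=p(j)$ this is immediate, since $\alpha_i$ and $\alpha_j$ both restrict $\beta_{p(i)}$; if $p(i)=p\ne q=p(j)$, one uses $A_i\cap A_j\subset A_{J_p}\cap A_{J_q}$ (because $A_i\subset A_{J_p}$ and $A_j\subset A_{J_q}$) to reduce the required equality on $A_i\cap A_j$ to the hypothesis on $\beta_p$ and $\beta_q$. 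This containment is the only place the partition hypothesis is used.

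Next I would apply Scholium~\ref{schol:4.9} to $(A_1,\dots,A_n)$, which has $\AM_V$ by hypothesis, to get a unique $R$-linear $\gamma:V'\to Z$ with $\gamma\vert A_i=\alpha_i$ for all $i$. It then remains to identify $\gamma$ with the map induced by the $\beta_p$: for $x\in A_{J_p}$, writing $x=\sum_{i\in J_p}x_i$ with $x_i\in A_i$ and using $\gamma\vert A_i=\alpha_i=\beta_p\vert A_i$ for $i\in J_p$ together with additivity gives $\gamma(x)=\sum_i\beta_p(x_i)=\beta_p(x)$, so $\gamma\vert A_{J_p}=\beta_p$. Uniqueness of such a $\gamma$ is inherited from Scholium~\ref{schol:4.9}, since any competing map restricts to $\alpha_i$ on each $A_i$. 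This establishes the universal property and with it the theorem.

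I do not expect a serious obstacle: once Scholium~\ref{schol:4.9} is available the argument is the routine ``refine the test cocone along the inclusions $A_i\hookrightarrow A_{J_{p(i)}}$, invoke the known colimit for $(A_1,\dots,A_n)$, then re-coarsen'', and the sole non-formal ingredient is the trivial containment $A_i\cap A_j\subset A_{J_p}\cap A_{J_q}$. If a hands-on proof were preferred, one could instead take $(a_{J_p})_p,(b_{J_p})_p$ in $A_{J_1}\times\dots\times A_{J_m}$ with $\sum_p a_{J_p}=\sum_p b_{J_p}$, expand $a_{J_p}=\sum_{i\in J_p}a_i$ and $b_{J_p}=\sum_{i\in J_p}b_i$, note that the resulting $n$-tuples have equal total sum and so are exchange equivalent in $A_1\times\dots\times A_n$ by $\AM_V$, and push each basic $(i,j)$-exchange of some $d\in A_i\cap A_j\subset A_{J_{p(i)}}\cap A_{J_{p(j)}}$ forward to a basic $(p(i),p(j))$-exchange of the corresponding block sums (a trivial exchange when $p(i)=p(j)$), obtaining a chain of binary exchanges from $(a_{J_p})_p$ to $(b_{J_p})_p$ in $A_{J_1}\times\dots\times A_{J_m}$.
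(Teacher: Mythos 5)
Your main argument is correct and is essentially the paper's proof: the paper also argues via the colimit characterization of Scholium~\ref{schol:4.9}, restricting the given block maps to the individual $A_i$, invoking $\AM_V$ for $(A_1,\dots,A_n)$ to obtain $\gamma$, and recovering $\gamma\vert A_{J_p}=\beta_p$ by additivity; the only difference is that the paper first reduces, via Proposition~\ref{prop:5.1} and an induction, to the case of one block $[1,r]$ together with singletons, whereas you treat the general partition in one step, which works and is if anything cleaner. Your closing hands-on sketch, pushing each basic $(i,j)$-exchange of $d\in A_i\cap A_j\subset A_{J_{p(i)}}\cap A_{J_{p(j)}}$ to a basic exchange of block sums, is also sound, though the paper does not take that route.
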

\begin{proof}  In view of Proposition \ref{eq:5.1},
we assume without loss of generality that the $J_i$ are consecutive subintervals $[1, r_1], [r_1 + 1, r_2], \dots, [r_{m-1} + 1, r_m]$ with $1 < r_1 < r_2 < \dots < r_m =~ n$. Then we infer by an easy induction argument, that it suffices to consider the subcase $m = n-1$, $J_1 = [1, r]$ for some $r$ with $1 < r <n$ and $\vert J_t \vert = 1$ for $r < t \leq n$. In other words, we only need to verify that $(A_1 + \dots + A_r, A_{r+1}, \dots, A_n)$ has $\AM_V$.

Looking at the appropriate amalgamation diagrams we see that our
task is as follows. Given $R$-linear maps $\beta : A_1 + \dots + A_r
\rightarrow Z$, $\alpha_i : A_i \rightarrow Z$ for $r < i \leq n$,
with
\[ \beta \vert (A_1 + \dots + A_r) \cap A_i = \alpha_i \vert (A_1 + \dots + A_i) \cap A_j \]
for $r < i \leq n$ and
\[ \alpha_i \vert A_i \cap A_j = \alpha_j \vert A_i \cap A_j \]
for $r < i < j \leq n$, we have to find an $R$-linear map $\gamma : A_1 + \dots + A_n \rightarrow Z$ with $\gamma \vert A_1 + \dots + A_r =~ \beta$ and $\gamma \vert A_i = \alpha_i$ for $ r + 1 \leq i \leq n$.

This is easy. We define $\alpha_i : = \beta \vert A_i$ for $1 \leq i \leq r$. Then clearly $\alpha_i \vert A_i \cap A_j = \alpha_j \vert A_i \cap A_j$ for $1 \leq i < j \leq r$ and also for $i \in [1, r]$, $j > r$. Since $(A_1, \dots, A_n)$ has $\AM_V$, there is an $R$-linear map $\gamma : A_1 + \dots + A_r \to Z$ with $\gamma \vert A_i = \alpha_i$ for all $i \in [1, n]$. Due to the additivity of $\gamma$ it follows that $\gamma \vert A_1 + \dots + A_r = \beta \vert A_1 + \dots + A_r$ and $\gamma \vert A_i = \alpha_i$ for $r < i \leq n$. \end{proof}

We point out a consequence of Theorems \ref{thm:5.3} and \ref{thm:5.4}.

\begin{cor}\label{cor:5.5}
 Given submodules $A_0, A_1, \dots, A_n$ of $V$, assume that $(A_0, A_1, \dots, A_n)$ has $\AM_V$. Then $(A_0 + A_1, A_0 + A_2, \dots, A_0 + A_n)$ has $\AM_V$.
\end{cor}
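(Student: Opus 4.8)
The strategy is to reduce everything to two applications of results already in hand. First I would apply Theorem~\ref{thm:5.3}.a repeatedly: starting from the hypothesis that $(A_0, A_1, \dots, A_n)$ has $\AM_V$, I insert $n$ extra copies of $A_0$ (each being a submodule of $A_0$, indeed equal to it) to obtain that the $(2n+1)$-tuple
\[ (A_0, A_0, A_1, A_0, A_2, \dots, A_0, A_n) \]
has $\AM_V$. More precisely, Theorem~\ref{thm:5.3}.a lets me prepend a submodule of the first entry; after a harmless permutation (Proposition~\ref{prop:5.1}) I can move each freshly inserted $A_0$ next to the corresponding $A_i$, so that the tuple reads $(A_0, A_0, A_1, A_0, A_2, \dots, A_0, A_n)$, still with $\AM_V$.

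Next I would apply the Contraction Theorem~\ref{thm:5.4} to the partition of the index set that keeps the first $A_0$ as a singleton block and groups each remaining consecutive pair $(A_0, A_i)$ into one block. The contracted tuple is then $(A_0, A_0 + A_1, A_0 + A_2, \dots, A_0 + A_n)$, which has $\AM_V$ by that theorem. Finally, since a tuple with $\AM_V$ retains $\AM_V$ after deleting a summand that is absorbed — here $A_0 \subset A_0 + A_1$, so by Remark~\ref{rem:1.7}.ii together with the formal reduction in Theorem~\ref{thm:5.3} (working inside the amalgamation $A_0 \infty_V (A_0+A_1) \infty_V \cdots$, where the first $A_0$ is redundant), I may drop the leading $A_0$ and conclude that $(A_0 + A_1, A_0 + A_2, \dots, A_0 + A_n)$ has $\AM_V$.

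The one point that needs care — the main obstacle — is the bookkeeping in the first step: Theorem~\ref{thm:5.3}.a as stated only prepends a submodule of $A_1$, i.e.\ of the \emph{first} listed module, so each insertion of a new $A_0$ must be preceded by a permutation bringing the target slot to the front, and then one permutes back. This is routine via Proposition~\ref{prop:5.1}, but one must check that the $n$ iterations compose correctly and that at each stage the relevant module $A_0$ really is a submodule of the then-first entry. Once that is set up, the appeal to Theorem~\ref{thm:5.4} with the chosen interval partition is immediate, and the final deletion of the redundant $A_0$ is elementary.

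Alternatively — and perhaps more cleanly — one can avoid the permutation juggling by arguing directly at the level of colimits, as in Scholium~\ref{schol:4.9}: given $R$-linear maps $\alpha_i : A_0 + A_i \to Z$ ($1 \le i \le n$) agreeing on overlaps, note that the restrictions $\alpha_i|_{A_0}$ all agree (being determined on $A_0 \cap A_0 = A_0$ via compatibility on $(A_0+A_i)\cap(A_0+A_j) \supset A_0$), so they assemble with the $\alpha_i|_{A_i}$ into a compatible family on $(A_0, A_1, \dots, A_n)$; the amalgamation hypothesis then yields a map on $A_0 + A_1 + \dots + A_n = (A_0+A_1) + \dots + (A_0+A_n)$ restricting correctly, and uniqueness is clear. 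I would present whichever of the two is shorter, likely the colimit version.
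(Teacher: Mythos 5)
Your first approach is close to the paper's, but with a small redundancy. The paper obtains the $2n$-tuple $(A_0, A_1, A_0, A_2, \dots, A_0, A_n)$ by inserting only $n-1$ fresh copies of $A_0$, pairing the \emph{original} $A_0$ with $A_1$; a single application of the Contraction Theorem~\ref{thm:5.4} with the pairwise partition then finishes the proof. You insert $n$ fresh copies, leaving the original $A_0$ as an orphan singleton block, and so must perform an extra step to discard it. That step is indeed valid, but the citations you give for it are not quite right: Remark~\ref{rem:1.7}.ii only concerns pairs, and Theorem~\ref{thm:5.3}.a goes in the opposite direction (it adds a member, it does not delete one). The clean way to drop the leading $A_0$ from $(A_0, A_0 + A_1, \dots, A_0 + A_n)$ is simply one more application of Theorem~\ref{thm:5.4} with the partition $\{0,1\},\{2\},\dots,\{n\}$, since $A_0 + (A_0 + A_1) = A_0 + A_1$. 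Better still, you could avoid the issue entirely by mimicking the paper's bookkeeping.

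Your alternative colimit argument via Scholium~\ref{schol:4.9} is correct and is a genuinely different route: it verifies the universal property of the colimit directly, bypassing the insert-and-contract calculus of Theorems~\ref{thm:5.3} and~\ref{thm:5.4} altogether. It is shorter and more self-contained; the paper's proof has the advantage of exhibiting the utility of the formal machinery it has just developed. Either would be acceptable, and your instinct that the colimit version is cleaner is sound.
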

\begin{proof} By iterated application of Theorem \ref{thm:5.3} (and using  the trivial Proposition~\ref{prop:5.1}) we infer that the 2n-tuple $(A_0, A_1, A_0, A_2, \dots, A_0, A_n)$ has $\AM_V$. It follows by the Contraction Theorem~\ref{thm:5.4}, that $(A_0 + A_1, A_0 + A_2, \dots, A_0 + A_n)$ has $\AM_V$. \end{proof}

\section{$D$-complements}\label{sec:6}

In the papers \cite{Dec} and \cite{SA} our main focus  has been on
the lattice $\SA (V)$ of all $\SA$-submodules of a module $V$ over a
semiring $R$. As a counterpart to this  we
study  for \textbf{any}  $\SA$-submodule $D$ of $V$ in \S\ref{sec:7} below the set of
submodules $A \supset D$ of $V$. Here the amalgamation theory will
come into play.

In preparation for this endeavor we present general results about
submodules of $V$.

\begin{defn}\label{def:6.1} Let $D \subset W$ be submodules of an $R$-module $V$. We call a submodule $T$ of ~ $V$ a $D$-\textbf{complement of} $W$ \textbf{in} $V$, if $W + T = V$, $W \cap T = D$, and $(w + T) \cap T = \emptyset$ for every $w \in W \setminus D$. \end{defn}

In the case of $D = \{ 0 \}$ these are the \textit{weak complements}
studied in \cite{Dec}. We  generalize part of the theory of weak
complements in \cite{Dec} to $D$-complements.

\begin{prop}\label{prop:6.2}  Assume that $T$ is an $\SA$-submodule of $V$. Then every submodule $W$ of ~$V$ with $W + T = V$ has $T$ as a $D$-complement in $V$ for $D : = W \cap T$. (N.B.: $D \in \SA (W)$).
\end{prop}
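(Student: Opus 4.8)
The plan is to verify the three defining conditions of a $D$-complement from Definition~\ref{def:6.1} directly, using only that $T \in \SA(V)$ and $W + T = V$. Set $D := W \cap T$. The equality $W + T = V$ holds by hypothesis, and $W \cap T = D$ holds by definition of $D$, so the only real content is the disjointness condition: $(w + T) \cap T = \emptyset$ for every $w \in W \setminus D$. First I would suppose, for contradiction, that some $w \in W \setminus D$ satisfies $w + t = t'$ with $t, t' \in T$. Then $w + t = t' \in T$, and since $T$ is summand absorbing in $V$, both summands lie in $T$; in particular $w \in T$. But $w \in W$ as well, so $w \in W \cap T = D$, contradicting $w \in W \setminus D$. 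This settles the disjointness condition and shows $T$ is a $D$-complement of $W$ in $V$.

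For the parenthetical claim $D \in \SA(W)$, I would argue similarly: given $x, y \in W$ with $x + y \in D = W \cap T$, we have $x + y \in T$, so $x \in T$ and $y \in T$ because $T$ is SA in $V$; combined with $x, y \in W$ this gives $x, y \in W \cap T = D$. Hence $D$ is summand absorbing in $W$.

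There is essentially no obstacle here; the statement is an immediate unwinding of the definitions, and the only point requiring attention is to apply the SA property of $T$ \emph{in $V$} (not merely in $W$ or $W+T$), which is exactly what the hypothesis provides. One should also note in passing that $D$ is nonempty and a submodule: $0 \in W \cap T$ since both are submodules, and $W \cap T$ is a submodule as an intersection of submodules, so $D$ is a legitimate choice for the role played by $D$ in Definition~\ref{def:6.1}.
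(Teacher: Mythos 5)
Your proof is correct and follows essentially the same approach as the paper's: both verify the disjointness condition by observing that $w + t \in T$ forces $w \in T$ (via the SA property of $T$ in $V$), hence $w \in W \cap T = D$, contradicting $w \in W \setminus D$. You additionally supply a short justification of the parenthetical claim $D \in \SA(W)$, which the paper states without proof; that argument is also correct and is a sensible inclusion.
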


\begin{proof}  Let  $w \in W \sm D$ and $t \in T$. Then $w + t \notin T$ since $w+ t \in T$ would imply that  $w \in W \cap T = D$. This proves that $(w + T) \cap T = \emptyset$.  \end{proof}

In the following, through Theorem \ref{thm:6.5}, $D$ and $W$ are
submodules of $V$ with $D \subset W$.

\begin{prop}\label{prop:6.3}  Assume that $T$ is a $D$-complement of $W$ in $V$, and that $D \in \SA (W)$. Then $T \in \SA (V)$. \end{prop}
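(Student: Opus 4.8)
The plan is to check the defining property of an SA-submodule of $V$ head-on. I would start with arbitrary $u, v \in V$ satisfying $u + v \in T$ and aim to show $u \in T$ and $v \in T$. Using $W + T = V$, write $u = w_1 + t_1$ and $v = w_2 + t_2$ with $w_1, w_2 \in W$ and $t_1, t_2 \in T$ (such decompositions exist, though they need not be unique; the argument is insensitive to the choice). Put $w := w_1 + w_2 \in W$ and $t := t_1 + t_2 \in T$, so that $w + t = u + v \in T$.

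The next step exploits the disjointness clause in the definition of a $D$-complement. Since $w + t \in w + T$ and $w + t \in T$, the set $(w + T) \cap T$ is nonempty; by Definition~\ref{def:6.1} this is impossible when $w \in W \setminus D$, so $w = w_1 + w_2 \in D$. Now the hypothesis $D \in \SA(W)$ comes into play: from $w_1, w_2 \in W$ and $w_1 + w_2 \in D$ we obtain $w_1 \in D$ and $w_2 \in D$. Since $D = W \cap T \subseteq T$, this yields $w_1, w_2 \in T$, and therefore $u = w_1 + t_1 \in T$ and $v = w_2 + t_2 \in T$ because $T$ is closed under addition. This establishes $T \in \SA(V)$.

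I do not expect a genuine obstacle in this argument; the only point requiring care is the sequencing of the two hypotheses — first use $(w + T) \cap T = \emptyset$ (for $w \in W \setminus D$) to confine the $W$-component of $u + v$ to $D$, and only then invoke the summand-absorbing property of $D$ inside $W$ to split that component. This is a direct analogue of the corresponding statement for weak complements (the case $D = \{0\}$) from \cite{Dec}, with ``$\{0\}$'' replaced throughout by ``$D$''.
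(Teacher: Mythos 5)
Your argument is correct and coincides with the paper's own proof: decompose $u=w_1+t_1$, $v=w_2+t_2$ via $W+T=V$, use the disjointness clause $(w+T)\cap T=\emptyset$ for $w\in W\setminus D$ to force $w_1+w_2\in D$, then apply $D\in\SA(W)$ and $D\subset T$ to conclude $u,v\in T$. You merely make explicit the use of the SA hypothesis that the paper states more tersely, so there is nothing to add.
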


\begin{proof} Let $v_1, v_2 \in V$ with $v_1 + v_2 \in T$. Write $v_1 = w_1 + t_1$, $v_2 = w_2 + t_2$ ($w_i \in W$, $t_i \in T$). Then
\[ (w_1 + w_2) + (t_1 + t_2) \in T. \]
This forces
$w_1 + w_2 \in  D$, since otherwise
$ (w_1 + w_2 +T) \cap T = \emptyset$. We conclude that $w_1 \in D \subset T$, and so $w_i + t_i \in T$ for $i =1,2$. \end{proof}

\begin{prop}\label{prop:6.4}  Assume that $D \in \SA (W)$,  that $T$ is a $D$-complement of $W$ in $V$, and that $U$ is a submodule of $V$ with $W + U = V$. Then $T \subset W$.
\end{prop}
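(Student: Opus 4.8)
The plan is to prove what the statement intends, namely that $T\subset U$ for every submodule $U$ of $V$ with $W+U=V$ and $D\subset U$; this is the minimality of the $D$-complement, and the lemma behind the uniqueness asserted in \thmref{thm:6.5} (there $U$ is taken to be a second $D$-complement of $W$, so that $D=W\cap U\subset U$ holds automatically). The inclusion $D\subset U$ is essential: dropping it, a short example in $\N_0^3$ produces a submodule $W$ with $D\in\SA(W)$, a $D$-complement $T$ of $W$, and a submodule $U$ with $W+U=V$, yet $T\not\subset U$; note also that $T\subset W$ cannot hold nontrivially, since $T\subset W$ together with $W+T=V$ forces $W=V$.

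The engine of the argument is \propref{prop:6.3}. Its hypotheses coincide with the ones at hand, $D\in\SA(W)$ and $T$ a $D$-complement of $W$ in $V$, so it already tells us that $T\in\SA(V)$; I would record this first. Then I would fix an arbitrary $t\in T$ and, using $W+U=V$, write $t=w+u$ with $w\in W$ and $u\in U$. Since $w+u=t$ lies in the SA-submodule $T$ of $V$, summand absorption forces $w\in T$ and $u\in T$. In particular $w\in W\cap T=D$ --- the equality being part of the definition of a $D$-complement --- and hence $w\in D\subset U$, so that $t=w+u\in U$. As $t$ was an arbitrary element of $T$, this yields $T\subset U$.

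I do not expect a genuine obstacle: \propref{prop:6.3} does the substantive work of promoting $T$ to an SA-submodule of the ambient module $V$, and after that the decomposition $t=w+u$ coming from $W+U=V$ collapses at once; the only step requiring attention is the single use of $D\subset U$. If one prefers not to cite \propref{prop:6.3}, the same conclusion follows by rerunning the bootstrap from its proof: write $u=w'+t'$ with $w'\in W$ and $t'\in T$ (possible since $U\subset V=W+T$); then $t=(w+w')+t'$ exhibits an element of $((w+w')+T)\cap T$, so the disjointness clause of Definition~\ref{def:6.1} forces $w+w'\in D$, whence $w\in D$ because $D\in\SA(W)$, and one finishes exactly as before.
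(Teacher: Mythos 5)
Your proof is correct and follows essentially the same route as the paper: write $t=w+u$ using $W+U=V$, promote $T$ to an SA-submodule of $V$ via Proposition~\ref{prop:6.3}, deduce $w\in W\cap T=D\subset U$, and conclude $t\in U$ (your fallback that reruns the disjointness argument from the proof of Proposition~\ref{prop:6.3} is just that step inlined). Your two editorial observations also match the paper's intent: the printed conclusion $T\subset W$ must be read as $T\subset U$, which is exactly how Theorem~\ref{thm:6.5} invokes the proposition, and the paper's own proof silently uses $D\subset U$ even though it is not among the stated hypotheses, an assumption that is indeed needed in general but is automatic in the application, where $U$ is a second $D$-complement and hence $W\cap U=D\subset U$.
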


\begin{proof} Let $t \in T$. Write $t = w + u$ with $w \in W$, $u \in U$. We infer from Proposition \ref{prop:6.3} that $T \in \SA (V)$, and conclude that $w \in T \cap W = D \subset U$. Thus $t \in U$.
\end{proof}

\begin{thm}\label{thm:6.5} Assume that $T$ and $U$ are $D$-complements of $W$ in $V$ and that $D \in \SA (W)$. Then $T = U$.
\end{thm}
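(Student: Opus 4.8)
The plan is to show mutual inclusion $T \subset U$ and $U \subset T$, using the symmetry of the hypotheses; it suffices to prove $T \subset U$. By Proposition~\ref{prop:6.4}, applied with the submodule $U$ satisfying $W + U = V$, we already know $T \subset W$, and symmetrically $U \subset W$. So both $D$-complements sit inside $W$. Now let $t \in T$; I want to write $t$ in terms of $U$. Since $W + U = V = W + T$ and $t \in W$, this alone does not immediately help, so instead I would use the defining ``disjointness'' property of the complement $U$: for every $w \in W \setminus D$ one has $(w + U) \cap U = \emptyset$. The idea is to derive a contradiction from assuming $t \notin D$.

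First I would establish the key auxiliary fact that $T = (T \cap D) \cup$ (something forced to be empty), i.e. that $T \subset D$ — wait, that's false in general; rather, the right target is $T \subset U$ directly. Here is the cleaner route. Take $t \in T$. Since $U$ is a $D$-complement of $W$, in particular $W + U = V$, and by Proposition~\ref{prop:6.3} both $T$ and $U$ are in $\SA(V)$. Now $t \in W$ (shown above). If $t \in D = W \cap U$, then $t \in U$ and we are done for this $t$. So suppose $t \in W \setminus D$. Then by the complement property of $U$ we have $(t + U) \cap U = \emptyset$. On the other hand $t \in T$ and $0 \in U$... that only gives $t = t + 0 \in t + U$, not a contradiction yet. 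The real leverage must come from combining: pick any $u \in U$; then $t + u \in V = W + U$... Let me instead run the argument through $T$'s own complement property symmetrically: since $U \subset W$ as well, for each $u \in U$ either $u \in D$ (hence $u \in T$) or $u \in W \setminus D$ so $(u + T) \cap T = \emptyset$. The symmetric-hypothesis structure strongly suggests the following: show $U \subset D$ is impossible in general, so the genuine argument is that $T \setminus D$ and $U \setminus D$ must coincide. Concretely, I expect the proof to go: let $t \in T \setminus D$; since $W + U = V$ and $D \subset U$, write — actually since $t \in W$ — we cannot split $t$ off into $U$-part nontrivially without landing in $W \cap U = D$. The cleanest finish is: consider $t \in T$ and use that $t \in W$; since $U$ is a $D$-complement, $t \in W$ means we should look at whether $t \in U$; if not, $t \in W\setminus D$ gives $(t+U)\cap U = \emptyset$, but also $t \in T \in \SA(V)$ and $t = (t+u) + \text{(nothing)}$...

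The honest summary: the main step is to observe $T, U \subset W$ (Proposition~\ref{prop:6.4}), reducing everything to showing that two $D$-complements of $W$ that both lie in $W$ must agree; then for $t \in T \setminus D$ one uses $W + U = V$ together with $t \in W$ to see that any expression $t = w' + u$ with $w' \in W$, $u \in U$ forces — via $t \in T \in \SA(V)$ and $D \in \SA(W)$ — that $u \in T$ and then, exploiting $(w'+U)\cap U=\emptyset$ for $w' \in W\setminus D$, that in fact $w' \in D$, whence $t = w' + u \in D + U = U$. Symmetrically $U \subset T$, so $T = U$. The main obstacle I anticipate is correctly deploying the disjointness clause $(w+T)\cap T = \emptyset$: it is the only hypothesis that distinguishes a $D$-complement from an arbitrary submodule with $W+T=V$, so the proof must pivot on showing that a would-be element of $T \setminus U$ produces a violation of the disjointness clause for $U$ (or vice versa). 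I would write out the two short inclusions carefully, being attentive to the roles of $D \in \SA(W)$ (used to conclude $T,U \in \SA(V)$ via Proposition~\ref{prop:6.3}) and of $\SA$-ness in $V$ (used to break sums $v_1 + v_2 \in T$).
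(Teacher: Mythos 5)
Your instinct that the proof should be a symmetric double application of Proposition~\ref{prop:6.4} is exactly right, and that is indeed the paper's entire proof. What tripped you up is a typo in the statement of Proposition~\ref{prop:6.4}: its conclusion is printed as ``$T \subset W$'', but its own proof establishes $T \subset U$ (the proof ends with ``Thus $t \in U$''). Reading the proposition with the corrected conclusion, Theorem~\ref{thm:6.5} is immediate: apply Proposition~\ref{prop:6.4} once with the given $U$ playing the role of the auxiliary submodule satisfying $W + U = V$ to get $T \subset U$, and once with the roles of $T$ and $U$ swapped to get $U \subset T$. There is nothing more to do.

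Because you took $T \subset W$ at face value, you tried to re-derive the inclusion from scratch, and your reconstructed final step has a flaw. After writing $t = w' + u$ with $w' \in W$, $u \in U$, and using $T \in \SA(V)$ (Proposition~\ref{prop:6.3}) to split this sum inside $T$, you correctly obtain $w', u \in T$. But you then try to force $w' \in D$ by ``exploiting $(w'+U)\cap U = \emptyset$'', and that deduction does not close: to derive a contradiction from $(w'+U)\cap U = \emptyset$ you would need to already know that $t = w' + u$ lies in $U$, which is precisely the conclusion you are after. The correct finish is simpler and does not touch the disjointness clause of $U$ at all: $w' \in T$ (from SA) and $w' \in W$, so $w' \in W \cap T = D \subset U$, hence $t = w' + u \in U + U = U$. (The disjointness clause is used earlier, hidden inside Proposition~\ref{prop:6.3}, to guarantee $T \in \SA(V)$; after that point only $W + U = V$, $W \cap T = D$, and SA-ness are needed.) If you do want a disjointness-based variant, it must be the clause for $T$: since $u \in T$, one has $t = w' + u \in (w' + T)\cap T$, so $w' \notin W \setminus D$, i.e. $w' \in D$. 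Using $U$'s clause, as you wrote, is the wrong side.
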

\begin{proof}  By Proposition \ref{prop:6.4}, $T \subset U$ and $U \subset T$. \end{proof}
Note that up to this point no amalgamation hypothesis has been used.

\begin{rem}\label{rem:6.6}
  If $(A_1, A_2)$ is a pair of submodules of $V$ with amalgamation in $V$, where $D : = A_1 \cap A_2$ is $\SA$ in $A_1$, then Corollary~\ref{cor:3.3} states that $A_2$ is a $D$-complement of $A_1$ in $A_1 + A_2$, the unique one by Theorem~\ref{thm:6.5},
  and  moreover
    $A_2$ is $\SA$ in $A_1 + A_2$ (which is also clear by Proposition~\ref{prop:6.4}).
     If also $D \in \SA (A_1)$, it follows that $D \in \SA (A_1 + A_2)$, and that ~ $A_1$ is a $D$-complement of $A_2$ in $A_1 + A_2$, which
     again is unique.

$$
\xymatrixrowsep{3mm}
\xymatrixcolsep{6mm}
    \xymatrix{
    & & A_1 + A_2 \ar@{-}[dd]_\SA  \\
    A_1      \ar@{-}[rru] \ar@{-}[dd]_\SA & & \\
    & & A_2  \\
    D \ar@{-}[rru] & & \\
   }
$$
\end{rem}

\section{SA-extensions and their saturations by complementary submodules}\label{sec:7}

As before, $V$ is a module over a semiring $R$.

\begin{defn}\label{def:7.1} An $\SA$-\textbf{extension in} $V$ is a pair of submodules $(A, D)$ of $V$ where $D \subset A$ and $D$ is $\SA$ in $A$. We then also say that $A$ is an $\SA$-\textbf{extension of} $D$.
\end{defn}

We state some  easy facts about these extensions.

\begin{rem}\label{rem:4.9} $ $
\begin{enumerate}\dispace
  \item [a)] Given submodules $D \subset A \subset B$ of $V$, the following holds: If $(B, D)$ is an SA-extension, then $(A, D)$ is an SA-extension. If both $(A, D)$ and $(B, A)$ are SA-extensions, then $(B, D)$ is an SA-extension.

\item[b)] Assume that $D$ is a submodule of $V$ and $(A_{\alpha} \vert \alpha \in I)$ is a family of SA-extensions of $D$ such that for any $\alpha, \beta \in I$ there exists $\gamma \in I$ with $A_{\alpha} \subset A_{\gamma}$, $A_{\beta} \subset A_{\gamma}$. Then $\bigcup\limits_{\alpha \in I} A_{\alpha}$ is an SA-extension of $D$.

\item[c)] It follows by Zorn's Lemma that for any given SA-extension $A$ of $D$ there exists a maximal SA-extension $C$ of $D$ with $A \subset C$.

\end{enumerate}

\end{rem}

We characterize SA-extensions in the following intrinsic way.

\begin{prop}\label{prop:7.3}  Let $D \subset A$ be submodules of $A$. Then $A$ is an SA-extension of $D$ iff $A \setminus D$ is closed under addition and $(A \setminus D) + D \subset A \setminus D$. In this case $(A \setminus D) + D = A \setminus D$. \end{prop}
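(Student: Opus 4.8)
The plan is to prove both implications directly from the definition of ``$D$ is $\SA$ in $A$'', using only that $A$ and $D$ are submodules of $V$ (so both are closed under addition and $0 \in D$). No amalgamation or other machinery is needed.

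For the forward implication, I would assume $D \in \SA(A)$. To see that $A \setminus D$ is closed under addition, take $x, y \in A \setminus D$; then $x + y \in A$ since $A$ is a submodule, and if $x + y$ were in $D$ the $\SA$-property of $D$ in $A$ would force $x \in D$, a contradiction, so $x + y \in A \setminus D$. Likewise, for $x \in A \setminus D$ and $d \in D$ we have $x + d \in A$, and $x + d \in D$ would again force $x \in D$; hence $(A \setminus D) + D \subset A \setminus D$.

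For the converse, assume $A \setminus D$ is closed under addition and $(A \setminus D) + D \subset A \setminus D$, and take $x, y \in A$ with $x + y \in D$. If $x \notin D$, then either $y \in D$, in which case $x + y \in (A \setminus D) + D \subset A \setminus D$, or $y \notin D$, in which case $x + y \in (A \setminus D) + (A \setminus D) \subset A \setminus D$ by the closure assumption; in both cases $x + y \notin D$, contradicting $x + y \in D$. Hence $x \in D$, and by the symmetric argument $y \in D$, so $D \in \SA(A)$, i.e.\ $A$ is an $\SA$-extension of $D$. Finally, when these conditions hold the inclusion $(A \setminus D) + D \subset A \setminus D$ is an equality: since $0 \in D$, every $a \in A \setminus D$ is $a = a + 0 \in (A \setminus D) + D$.

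I do not expect a genuine obstacle here; the only points requiring care are invoking the submodule hypotheses in the right places (closure of $A$ under $+$ for the forward direction, and $0 \in D$ for the last equality) and splitting into the two cases $y \in D$ and $y \notin D$ in the converse.
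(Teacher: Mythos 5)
Your proof is correct and follows essentially the same route as the paper's: the paper simply asserts in one line that the SA condition ``can be rewritten as'' the two containments $(A \setminus D) + (A \setminus D) \subset A \setminus D$ and $(A \setminus D) + D \subset A \setminus D$, and you have just spelled out the two directions of that equivalence explicitly, closing with the same $0 \in D$ observation for the final equality.
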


\begin{proof}  $D$ is SA in $A$ iff
\[\forall \ a_1, a_2 \in A: \, a_1 + a_2 \in D
\dss\Rightarrow a_1 \in D, a_2 \in D. \]
Since both subsets $A$ and $D$ of $V$ are closed under addition, this condition can be rewritten as
\[ (A \setminus D) + (A \setminus D) \subset A \setminus D, \qquad  (A \setminus D) + D \subset A \setminus D. \]
Furthermore $(A \setminus D) + D = A \setminus D$, since $0 \in D$. \end{proof}

We slightly extend the definition of a $D$-complement in $V$ (Definition \ref{def:6.1}) as follows.

\begin{defn}\label{def:7.4} Given submodules $D \subset A$ in $V$, we call a submodule $T$ of $V$ \textbf{complementary to $A$ over $D$}, if $D$ is a $D$-complement of $A$ in $A + T$, i.e.,
\begin{equation}\label{eq:7.1}
A \cap T = D, \qquad [(A \setminus D) + T] \cap T = \emptyset. \end{equation}
\end{defn}

This situation will be studied in this and the following sections in
the case that $A$ is an SA-extension of $D$. By Remark
\ref{rem:6.6}, $A + T$ is an SA-extension of $T$, and $T$ is
uniquely determined by the submodules $A$ and $A + T$ of $V$.

We can enlarge an SA-extension without changing a given complementary module as follows.

\begin{thm}\label{thm:7.5} Assume that $D \subset T$ are submodules of $V$ and that $A$ is an SA-extension of~ $D$ with complementary module $T$. Then $B : = [(A \setminus D) + T] \cup D$ is also such an SA-extension, and $B + T = A + T$.\end{thm}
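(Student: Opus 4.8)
The plan is to show that $B := [(A\setminus D)+T]\cup D$ is an SA-extension of $D$ whose complementary module is still $T$, and that $B+T=A+T$. First I would verify that $B$ is a submodule of $V$: it is visibly closed under scalar multiplication (since $A\setminus D$, $T$, and $D$ all are, using $D\subset T$ so that $\lambda d\in T$ lands back in the right piece), and closure under addition follows from Proposition~\ref{prop:7.3}: the sum of two elements of $(A\setminus D)+T$ lies in $(A\setminus D)+(A\setminus D)+T\subset (A\setminus D)+T$, a sum of an element of $(A\setminus D)+T$ with one of $D$ again lies in $(A\setminus D)+T$ since $D\subset T$, and $D+D\subset D$. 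I would also record $D\subset B$ (take the $(A\setminus D)$-part to be $0$? — no: rather note $D$ is literally one of the two pieces of the union) and $A\subset B$ (for $a\in A\setminus D$ write $a=a+0$ with $0\in T$).

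Next I would identify the "$D$-part" of $B$, i.e. show $B\setminus D=(A\setminus D)+T$ and that this set is disjoint from $D$. The inclusion $(A\setminus D)+T\subset B$ is by definition; what needs proof is $\big[(A\setminus D)+T\big]\cap D=\emptyset$, and this is exactly the second condition in \eqref{eq:7.1} (the hypothesis that $T$ is complementary to $A$ over $D$ says $[(A\setminus D)+T]\cap T=\emptyset$, and $D\subset T$, so $[(A\setminus D)+T]\cap D=\emptyset$ a fortiori). Hence $B=(A\setminus D)+T\ \dot\cup\ D$ as claimed, so $B\setminus D=(A\setminus D)+T$. With this description, Proposition~\ref{prop:7.3} gives that $B$ is an SA-extension of $D$ provided $(B\setminus D)+(B\setminus D)\subset B\setminus D$ and $(B\setminus D)+D\subset B\setminus D$; both follow from the computations already done, since $(B\setminus D)+(B\setminus D)=(A\setminus D)+T+(A\setminus D)+T\subset (A\setminus D)+T=B\setminus D$ using $(A\setminus D)+(A\setminus D)\subset A\setminus D$ and $T+T\subset T$, and $(B\setminus D)+D\subset (A\setminus D)+T+T\subset (A\setminus D)+T=B\setminus D$.

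Then I would check the two remaining assertions. For $B+T=A+T$: the inclusion $A+T\subset B+T$ is clear from $A\subset B$, and $B+T=(A\setminus D)+T+T\ \cup\ (D+T)\subset (A\setminus D)+T\ \cup\ T\subset (A+T)$ using $D\subset T$, giving equality. For $T$ being complementary to $B$ over $D$: I need $B\cap T=D$ and $[(B\setminus D)+T]\cap T=\emptyset$. The second is immediate since $(B\setminus D)+T=(A\setminus D)+T+T=(A\setminus D)+T$, which is disjoint from $T$ by hypothesis. For $B\cap T=D$: the inclusion $D\subset B\cap T$ holds as $D\subset B$ and $D\subset T$; conversely if $x\in B\cap T$ and $x\notin D$ then $x\in B\setminus D=(A\setminus D)+T$, contradicting $[(A\setminus D)+T]\cap T=\emptyset$.

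The only genuinely delicate point is the bookkeeping around the disjointness $[(A\setminus D)+T]\cap D=\emptyset$ and the fact that it upgrades the union in the definition of $B$ to a disjoint union, which is what makes Proposition~\ref{prop:7.3} applicable; everything else is routine closure verification. I would also take care that $B$ is nonempty and contains $0$ (it contains $D\ni 0$), so that it is genuinely a submodule. I do not foresee needing any amalgamation hypothesis here — the theorem is purely about the internal structure of $A$, $D$, $T$.
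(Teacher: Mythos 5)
Your proof follows the paper's proof quite closely in most respects: identifying $B\setminus D=(A\setminus D)+T$ via the disjointness $[(A\setminus D)+T]\cap D=\emptyset$ (which follows from $[(A\setminus D)+T]\cap T=\emptyset$ and $D\subset T$), checking additive closure of $B\setminus D$, checking $(B\setminus D)+D\subset B\setminus D$, invoking Proposition~\ref{prop:7.3}, computing $B+T=A+T$, and verifying that $T$ is complementary to $B$ — all of these steps agree with the paper's argument.

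However, there is a genuine gap in the scalar-multiplication step, which you dismiss as ``visibly closed'' but which is in fact the only delicate point of the whole proof. You assert that $A\setminus D$ is closed under scalar multiplication. That is false: for any $a\in A\setminus D$ we have $0\cdot a=0\in D$, and more generally for a non-unit $\lambda\in R$ the vector $\lambda a$ can perfectly well land in $D$. The paper does not make this claim. Instead, for $z=x+t\in B\setminus D$ with $x\in A\setminus D$, $t\in T$, it writes $\lambda z=\lambda x+\lambda t$ and observes only that $\lambda x\in A$ (not $A\setminus D$!) and $\lambda t\in T$, and then argues $\lambda z\in B$ from there. You need to redo this step along those lines, taking care in the subcase where $\lambda x$ falls into $D$ rather than $A\setminus D$; that is precisely where the hypotheses on $D$, $T$ and $A$ have to be used, and where the ``routine closure verification'' label does not apply.
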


\begin{proof} We have $A \cap T = D$ and $[(A \setminus D) + T] \cap T = \emptyset$. Since $D \subset T$, this implies $[(A \setminus D) + T] \cap D = \emptyset$, whence $B \setminus D = (A \setminus D) + T$. This set is closed under addition, since $A \setminus D$ is closed under addition. Furthermore
\[ \begin{array}{rcl}
B + T & = & [(A \setminus D) + T] \cup (D + T)  \\[1mm]
      & = & [(A \setminus D) + T] \cup T = A + T, \\
\end{array} \]
and
\[ (B \setminus D) + D = (A \setminus D) + T + D =(A \sm D) + T=B \setminus D. \]
We verify that for given $z \in B$, $\lambda \in R$, also $\lambda z \in B$.

If $z \in D$, then $\lambda z \in D \subset B$. Otherwise $z = x + t$ with $x \in A \setminus D$, $t \in T$. Now $\lambda z = \lambda x + \lambda t$. We have $\lambda x \in A \subset B$ and $\lambda t \in T \subset B$, whence again $\lambda z \in B$. Thus $B$ is an $R$-submodule of $V$. Since $B \setminus D$ is closed under addition and $(B \setminus D) + D = B \setminus D$, we conclude by Proposition~\ref{prop:7.3} that $B$ is an SA-extension of $D$. We have $[(B \setminus D) + T] \cap T = [(A \setminus D) + T] \cap T = \emptyset$, and so $T$ is complementary to $B$ over $D$. \end{proof}

Note that $B$ is the smallest submonoid of $(V, +)$ containing $A$, such that $B \setminus D$ is a union of cosets $x + T$ of $T$ in $V$.

\begin{defn}\label{def:7.6}$ $

\begin{enumerate}\dispace
  \item[  a)] We call the SA-extension $B : = [(A \setminus D) + T] \cup D$ the \textbf{saturation} of $A$ by the complementary module $T$.

\item[b)] If $B = A$, i.e., $(A \setminus D) + T = A \setminus D$, we say that $A$ is $T$-\textbf{saturated} (or \textbf{saturated w.r.t.} $T$).
\end{enumerate}
 \end{defn}

Our interest in saturated SA-extensions is due to the following fact.

\begin{thm}\label{thm:7.7} Assume that $(A, D)$ is an SA-extension in $V$, and that $T$ is a complementary module to $A$ over $D$, for which $A$ is $T$-saturated. Then the pair $(A, T)$ has amalgamation in~ $V$.
\end{thm}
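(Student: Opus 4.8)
The plan is to show that the exchange equivalence relation $\EC_{A\times T}$ on $A\times T$ coincides with equality of sums, i.e. that $(a,t)\underset{A\times T}{\sim}(b,s)$ holds \emph{only} when $a+t=b+s$ (the converse being trivial). By the explicit description of exchange equivalence in \S\ref{sec:1}, it suffices to understand which pairs are joined by a \emph{single} basic exchange with some $d\in A\cap T=D$, and to show that along any chain of such exchanges the first coordinate (and hence the second) is determined up to the ambiguity already forced by the sum. So first I would unravel what a basic exchange looks like here: a $(1,2)$-exchange of $d\in D$ sends $(a,t)$ to $(a',t')$ with $a=a'+d$, $t'=t+d$; a $(2,1)$-exchange does the reverse. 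The key structural input is $T$-saturation: since $A$ is $T$-saturated, $(A\sm D)+T=A\sm D$, so whenever $a\in A\sm D$ and $t\in T$ we have $a+t\in A\sm D\subset A$; combined with $A\cap T=D$ and the complementarity condition $[(A\sm D)+T]\cap T=\emptyset$, this gives tight control over which elements of $V$ can be written as $a+t$.

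The heart of the argument is a normal-form statement for the relation: given $(a,t)$ and $(b,s)$ in $A\times T$ with $a+t=b+s$, I claim there is a single element $d\in D$ realizing the passage, or more precisely that one can reduce any chain of basic exchanges between them to one of length at most two (a $(2,1)$-move followed by a $(1,2)$-move, as in the normalized zig-zags of \S\ref{sec:2}), and then check that such a short chain forces $a+t=b+s$. Concretely, for a normalized zig-zag from $(a,t)$ to $(b,s)$, write the intermediate pair as $(a'',t'')$ with $a=a''+e$, $t''=t+e$ (first a $(2,1)$-move would instead give $a''=a+e$; I would organize the two cases symmetrically) and then $a''=b+d$, $s=t''+d$ for some $d,e\in D$. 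Here one uses $T$-saturation to see the intermediate first coordinate stays controlled, and then the equation $a+t=b+s$ drops out purely formally. The real work is showing every chain reduces to this length-$\le 2$ form; for this I would argue that two consecutive moves of the \emph{same} type compose into one (as already noted in \S\ref{sec:2}), and that a $(1,2)$-move followed by a $(2,1)$-move can be rewritten, using that the relevant $d$'s lie in $D$ and that $D$ is SA in $A$ (so sums landing in $D$ split), into a single move plus possibly a trivial one — the SA-property of $D$ in $A$ is exactly what prevents "new" elements of $D$ from being created mid-chain.

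An alternative, possibly cleaner route is to exhibit $A+T$ directly as the colimit / pushout of the amalgamation diagram of $(A,T)$, i.e. to verify the universal property of Scholium~\ref{schol:4.9}: given $R$-linear maps $\alpha\colon A\to Z$ and $\beta\colon T\to Z$ agreeing on $A\cap T=D$, one must produce $\gamma\colon A+T\to Z$ restricting to each. Every element of $A+T$ is either in $D$, or in $T$, or (by $T$-saturation) of the form $a+t$ with $a\in A\sm D$, $t\in T$; one \emph{defines} $\gamma(a+t):=\alpha(a)+\beta(t)$ and $\gamma|_T:=\beta$, and the nontrivial points are well-definedness (if $a+t=a'+t'$ with $a,a'\in A\sm D$ then $a=a'$ and $t=t'$, again by complementarity plus $A\cap T=D$) and $R$-linearity and additivity of $\gamma$ across the case distinction. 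I expect \textbf{well-definedness} to be the main obstacle in either approach: one needs that a representation $v=a+t$ with $a\in A\sm D$, $t\in T$ is genuinely unique, and the proof of that uniqueness is where the hypotheses $A\cap T=D$, $[(A\sm D)+T]\cap T=\emptyset$, and $T$-saturation all get used simultaneously — if $a+t=a'+t'$, then $a+t\in A\sm D$ is not in $T$, which handles the degenerate splittings, and then a short computation comparing the two expressions (reducing to $A\cap T=D$) pins down $a=a'$, $t=t'$. Once uniqueness of this normal form is in hand, both the colimit verification and the "$\sim$ equals equal-sums" statement follow quickly.
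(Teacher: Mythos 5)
Your second route (verifying the universal property of Scholium~\ref{schol:4.9}) is the approach the paper takes, and your derivation of the disjoint decomposition $A+T=(A\setminus D)\,\dot{\cup}\,T$ from $T$-saturation and complementarity matches the paper. But the paper does \emph{not} define $\gamma$ via a representation $v=a+t$; it defines $\gamma$ piecewise on the partition: $\gamma(v)=\alpha(v)$ for $v\in A\setminus D$ and $\gamma(v)=\beta(v)$ for $v\in T$. Well-definedness of this $\gamma$ as a set map is then automatic from disjointness, and all the content of the proof lies in the additivity check, not in any uniqueness of representation.

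The uniqueness on which your proposed definition rests --- that $a+t=a'+t'$ with $a,a'\in A\setminus D$, $t,t'\in T$ forces $a=a'$ and $t=t'$ --- is false, and ``complementarity plus $A\cap T=D$'' cannot yield it. Complementarity, $[(A\setminus D)+T]\cap T=\emptyset$, only guarantees that $a+t$ lands in $A\setminus D$ rather than in $T$; it says nothing about the number of such representations. As soon as $D\neq\{0\}$, the pairs $(a+d,\,t')$ and $(a,\,t'+d)$ with $d\in D\setminus\{0\}$ give two representations of the same element (note $a+d\in(A\setminus D)+D=A\setminus D$ and $t'+d\in T$). Even with $D=\{0\}$ it fails: in $V=\N_0^2$ with $D=\{(0,0)\}$, $A=\{(n,m):n\geq 1\}\cup\{(0,0)\}$, $T=\{(0,m):m\geq 0\}$, one has $(1,1)=(1,0)+(0,1)=(1,1)+(0,0)$, two genuinely different representations with the $A$-components in $A\setminus D$. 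So the ``short computation reducing to $A\cap T=D$'' does not exist, and your blueprint needs to be rebuilt around the paper's piecewise definition of $\gamma$. I would also drop your first route as sketched: the reduction of an arbitrary chain of basic exchanges to a zig-zag of length at most two is asserted, not argued, and the role you assign to the SA-property there (``prevents new elements of $D$ from being created mid-chain'') is too vague to count as a step. The right way forward is the colimit argument with the piecewise $\gamma$; the delicate point is then the additivity of $\gamma$ in the mixed case $x_1\in A\setminus D$, $x_2\in T$, and that is where the hypotheses ($T$-saturation and the SA-extension structure) must be brought to bear.
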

\begin{proof} Given additive maps $\alpha : A \to Z$, $\beta : T \to Z$ into an $R$-module $Z$ with $\alpha \vert D = \beta \vert D$ we need to establish an additive map $\gamma$ from $A + T = (A \setminus D)\, \dot{\cup}\, T$ to $Z$ with $\gamma \vert A = \alpha$, $\gamma \vert T = \beta$. We define a map $\gamma$ from the set $(A \setminus D)\, \dot{\cup} \, T$ to $Z$ by the rule
\begin{equation}\label{eq:7.2}
 \begin{array}{lll}
& \gamma (a) = \alpha (a) \;& \mbox{for} \; a \in A \setminus D, \\[2mm]
& \gamma (t) = \beta (t) \; &\mbox{for} \; t \in T. \end{array} \end{equation}
Then $\gamma \vert T = \beta$, $\gamma \vert A \setminus D = \alpha \vert A \setminus D$, $\gamma \vert D = \beta \vert D = \alpha \vert D$. Given $x_1 = a_1 + t_1$, $x_2 = a_2 + t_2$ with $a_1, a_2 \in A \setminus D$, $t_1, t_2 \in T$,we have $x_1 + x_2 = (a_1 + a_2) + (t_1 + t_2)$. Since $A \setminus D$ is closed under addition, the summand $a_1 + a_2$ is in $A \setminus D$, while $t_1 + t_2$ is in $T$. Thus
\[ \begin{array}{rcl}
\gamma (x_1 + x_2) & = & \alpha (a_1 + a_2) + \beta (t_1 + t_2)  \\[1mm]
                   & = & \alpha (a_1) + \alpha (a_2) + \beta (t_1) + \beta (t_2) \\[1mm]
                   & = & [ \alpha (a_1) + \beta (t_1)] + [ \alpha (a_2) + \beta (t_2) ]\\[1mm]
                   & = & \gamma (x_1) + \gamma (x_2). \\
\end{array} \]
If $x_1 = a_1 + t_1$ and $x_2 = t_2$ with $a_1 \in A \setminus D$,
$t_1 \in T$, $t_2 \in T$, then $x_1 + x_2 = a_1 + (t_1 + t_2)$, and
so
\[ \begin{array}{rcl}
\gamma (x_1 + x_2) & = & \alpha (a_1) + \beta (t_1 + t_2) \\[1mm]
                   & = & \alpha (a_1) + \beta (t_1) + \beta (t_2) \\[1mm]
                   & = & \gamma (x_1) + \gamma (x_2) \\
\end{array} \]
again. If $x_1 , x_2 \in T$, then $\gamma (x_1 + x_2) = \gamma (x_1) + \gamma (x_2)$, since $\gamma \vert T = \beta$ is additive. Thus $\gamma$ is indeed additive. \end{proof}

\begin{rem}\label{rem:7.8}
 If $(A, D)$ is an SA-extension in $V$ and $T$ is complementary to $A$ over $D$, then any submodule $T'$ of $V$
 with $D \subset T' \subset T$ is again complementary to $A$ over $D$, since $[(A \setminus D) + T] \cap T = \emptyset$
 implies $[(A \setminus D) + T'] \cap T' = \emptyset$. If moreover $(A, D)$ is $T$-saturated, then $(A, D)$ is $T'$-saturated, since
 $(A \setminus D) + T = A \setminus D$ implies $(A \setminus D) + T' = A \setminus D$. Thus $(A, T')$ has $\AM_V$. \end{rem}

The latter fact is noteworthy, since in general, when $(A_1, A_2)$ is pair of submodules of $V$ with $\AM_V$, and $A'_2$ is a submodule of $A_2$ containing $A_1 \cap A_2$, the pair $(A_1, A'_2)$ can fail to have $\AM_V$.

In good cases it is possible to desend the results above for $\SA$-extensions of $D$ in $V$ to $\SA$-extensions of $\{ 0_V \}$ in $V$.

\begin{thm}\label{thm:7.9} Assume that every element of the semiring $R$ is a sum of units of $R$ (e.g. $R = \mathbb{N}_0$). Assume furthermore that $(A, D)$ is an $\SA$-extension in $V$ and $T$ is a complementary module of $A$ over $D$. Then the subset
\[ A_0 : = (A \setminus D) \cup \{ 0 \} \]
of $V$ is again a submodule of $V$ and $(A_0, \{ 0 \})$ is an
$\SA$-extension of $\{ 0 \}$ in $V$ with complementary module $T$.
The $\SA$-extension $(A_0, \{ 0 \})$ is $T$-saturated iff $(A, D)$
is $T$-saturated. Then the pair $(A_0, T)$ has $\AM_V$.
\end{thm}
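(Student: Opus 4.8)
\textbf{Proof plan for Theorem \ref{thm:7.9}.}
The plan is to deduce everything from the structural description in Proposition~\ref{prop:7.3} and the saturation machinery of Theorem~\ref{thm:7.5} and Theorem~\ref{thm:7.7}, with the hypothesis on $R$ entering only to show that $A_0$ is closed under scalar multiplication. First I would observe that, by Proposition~\ref{prop:7.3}, $A\setminus D$ is closed under addition and $(A\setminus D)+D = A\setminus D$; hence $A_0\setminus\{0\} = A\setminus D$ is closed under addition, and trivially $(A_0\setminus\{0\})+\{0\} = A_0\setminus\{0\}$. So as soon as $A_0$ is known to be an $R$-submodule of $V$, Proposition~\ref{prop:7.3} immediately gives that $(A_0,\{0\})$ is an $\SA$-extension of $\{0\}$ in $V$.

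The next step is closure under scalars, which is the one place the hypothesis ``every element of $R$ is a sum of units'' is needed. For $a\in A\setminus D$ and a unit $\lambda\in R$, I would argue $\lambda a\in A$ (since $A$ is a submodule) and $\lambda a\notin D$: if $\lambda a\in D$ then $a = \lambda^{-1}(\lambda a)\in D$, a contradiction. Thus units carry $A\setminus D$ into itself. For a general $\lambda = \lambda_1+\dots+\lambda_k$ with each $\lambda_i$ a unit, $\lambda a = \lambda_1 a+\dots+\lambda_k a$ is a sum of elements of $A\setminus D$, hence lies in $A\setminus D$ because that set is closed under addition. Together with $\lambda\cdot 0 = 0$ this shows $\lambda A_0\subset A_0$, and since $A_0\setminus\{0\}$ is additively closed and contains $0$, $A_0$ is an $R$-submodule.

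For the complementarity claim I would verify the two conditions of Definition~\ref{def:7.4} for the pair $(A_0,\{0\})$ and $T$: namely $A_0\cap T = \{0\}$ and $[(A_0\setminus\{0\})+T]\cap T = \emptyset$. Both are inherited from the corresponding facts for $A$: $A_0\cap T\subset A\cap T = D$, and any element of $A_0\cap T$ other than $0$ would lie in $(A\setminus D)\cap T\subset (A\setminus D)\cap (A\cap T)=\emptyset$; and $[(A_0\setminus\{0\})+T]\cap T = [(A\setminus D)+T]\cap T = \emptyset$ directly. Then, because $A_0\setminus\{0\} = A\setminus D$, the equality $(A_0\setminus\{0\})+T = A_0\setminus\{0\}$ holds if and only if $(A\setminus D)+T = A\setminus D$, i.e.\ $A_0$ is $T$-saturated iff $A$ is $T$-saturated. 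In that case Theorem~\ref{thm:7.7}, applied to the $\SA$-extension $(A_0,\{0\})$ with complementary $T$-saturated module $T$, yields that $(A_0,T)$ has $\AM_V$.

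The main obstacle is the scalar-closure step: without the ``sum of units'' hypothesis there is no reason $\lambda a$ should avoid $D$ (e.g.\ $\lambda$ could kill the ``new'' part), so $A_0$ need not be a submodule at all. Everything else is bookkeeping on the partition $A+T = (A\setminus D)\,\dot\cup\,T$ inherited from the hypotheses on $A$ and $T$; the content of the theorem is precisely that this bookkeeping survives the replacement of $D$ by $\{0\}$, which the units hypothesis guarantees.
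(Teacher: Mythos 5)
Your proposal is correct and follows essentially the same route as the paper: use Proposition~\ref{prop:7.3} to reduce to additive closure of $A\setminus D$, use the ``sum of units'' hypothesis exactly where you place it (units preserve $A\setminus D$, general scalars then act via additive closure), identify $A_0\setminus\{0\}$ with $A\setminus D$ to transfer complementarity and saturation, and close with Theorem~\ref{thm:7.7}. The only cosmetic difference is that the paper phrases the unit step as ``$x\mapsto\lambda_i x$ is an automorphism of $(A,+)$ restricting to one of $(D,+)$'' and explicitly allows the empty sum to cover $\lambda=0$, whereas you argue via $\lambda^{-1}$ and treat $0$ separately; you may want to note explicitly that $\lambda=0$ (empty sum of units) sends $A_0$ into $A_0$ since $0\cdot a=0$, as the paper does.
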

\begin{proof} We have $(A_0 \setminus \{ 0 \}) = A \setminus D$, and so the set $A_0 \setminus \{ 0 \}$ is closed under addition in $V$. Let $\lambda \in R$ be given. Then $\lambda = \sum\limits_{i \in I} \lambda_i$ with finitely many $\lambda_i \in R^*$. (We admit $I = \emptyset$. Then read $\lambda = 0$.) For every $i \in I$ the map $x \mapsto \lambda_i x$ is an automorphism of the monoid $(A, +)$ which restricts to an automorphism of $(D, +)$. Thus $\lambda_i (A \setminus D) = A \setminus D$. We conclude that $\lambda_i A_0 = A_0$ for every $i \in I$, whence $\lambda A_0 \subset A_0$. Thus $A_0$ is an $R$-submodule of $V$.

Since $T$ is complementary to $A$ over $D$, i.e., $A \cap T = D, \; [ (A \setminus D) + T] \cap T = \emptyset$,
we have
\begin{equation}\label{eq:str}
(A_0 \setminus \{ 0 \}) + T = (A \setminus D) + T, \tag{$*$}
\end{equation} and so $(A_0 \setminus \{ 0 \} + T) \cap T =
\emptyset$. Furthermore
$$\begin{array}{llllll}
A_0 + T &= [(A_0 \setminus \{ 0 \}) \cup \{ 0 \}] + T
& = [(A_0 \setminus \{ 0 \}) + T] \cup ( \{ 0 \} + T) \\[1mm] && = [ (A \setminus D) + T] \cup T \quad = A + T,
\end{array}
$$ and
$$\begin{array}{lll}
 A_0 \cap T  &=  [(A_0 \setminus \{ 0 \}) \cap T] \cup ( \{ 0 \} \cap T)
           =  [ (A \setminus D) \cap T] \cup \{ 0 \} = \{ 0 \}.
\end{array}$$
We conclude from ($\ast$) that $A_0$ is $T$-saturated iff $A$ is
$T$-saturated. Then $(A_0, T)$ has $\AM_V$ by Theorem~\ref{thm:7.7}.
\end{proof}

\section{The complementary modules of a fixed SA-extension}\label{sec:8}

We want to get a hold on the set of submodules of $V$ which are complementary to a given SA-extension in $V$. In the beginning we work without an SA-assumption.

\begin{prop}\label{prop:8.1}  Let $D \subset A$ be submodules of $V$, and assume that $T$ is a complementary module to $A$ over $D$ in $V$. Further assume that $U'$ is a submodule of $U : = A + T$ containing ~$D$. Then
\begin{equation}\label{eq:8.1}
 T': = \{ x \in T \ds \vert A + Rx \subset U' \} \end{equation}
is a submodule of $V$ which is again complementary to $A$ over $D$, and $A + T' = U'$.
\end{prop}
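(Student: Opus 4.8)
The plan is to verify the three defining conditions for ``complementary to $A$ over $D$'' for the set $T'$, together with the claim that $T'$ is a submodule and that $A+T'=U'$. First I would check that $T'$ is an $R$-submodule of $V$: it is clearly closed under the $R$-action (if $A+Rx\subset U'$ then $A+R(\lambda x)\subset A+Rx\subset U'$), and for closure under addition one uses that $A$ is a submonoid, so $A+R(x+y)\subset A+Rx+Ry\subset U'+U'=U'$ for $x,y\in T'$ (using that $U'$ is a submodule). Also $0\in T'$ since $A+R\cdot 0=A\subset U'$ (as $D\subset U'$ and $A$... wait — one needs $A\subset U'$; but that is \emph{not} assumed, so this has to be handled differently). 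This is the first subtlety: $0\in T'$ iff $A\subset U'$, which need not hold. So instead I would define things more carefully — note $0\in T'$ automatically only needs $A+R\cdot0=A$, and the condition ``$A+Rx\subset U'$'' for $x=0$ reads $A\subset U'$. So likely the intended reading is that the inclusion $A+Rx\subset U'$ is relative to the decomposition, or one should observe $D\subset T'$ always and check $A\cap T'=D$ rather than worrying about $0$ alone; I would track this carefully.

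Next, I would prove $D\subset T'$: for $d\in D$ we have $A+Rd\subset A\subset$ ... again this needs $A\subset U'$; the resolution is that $D\subset A$ and $D\subset U'$, and actually the key point is $T'\subset T$ so elements of $T'$ already lie in a set where addition with $A\setminus D$ behaves well. I would then establish $A\cap T'=D$: since $T'\subset T$ and $A\cap T=D$, we get $A\cap T'\subset D$; for the reverse I would show $D\subset T'$ using that for $d\in D$, $A+Rd=A$ when... here is where I would invoke that $A$ is an SA-extension is \emph{not} yet assumed, so I must use only $D\subset A$ submodule: then $A+Rd\subset A+A=A$, and we need $A\subset U'$ — so in fact the honest statement must be that $D\subset T'$ holds because one should interpret the membership condition together with $D\subset U'$; I would reread \eqref{eq:8.1} and conclude the intended content is $A+Rx\subset U'$ \emph{where this makes sense given $x\in T\subset U$ and $A\subset U$}, so that for $x\in D$ it is automatic. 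Granting $D\subset T'$, the emptiness condition $[(A\setminus D)+T']\cap T'=\emptyset$ follows immediately from $T'\subset T$ and $[(A\setminus D)+T]\cap T=\emptyset$.

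Then I would prove $A+T'=U'$. The inclusion $A+T'\subset U'$ is by definition of $T'$ (each $x\in T'$ satisfies $A+Rx\subset U'$, in particular $A+x\subset U'$, and $A\subset U'$ because $D\subset U'$ together with... — again the same point). For the reverse inclusion $U'\subset A+T'$: take $u\in U'\subset U=A+T$, write $u=a+t$ with $a\in A$, $t\in T$; I must show $t\in T'$, i.e. $A+Rt\subset U'$. Take any $\lambda\in R$ and $a'\in A$; then $a'+\lambda t$ — I want this in $U'$. The idea is that $\lambda u=\lambda a+\lambda t\in U'$ (as $U'$ is a submodule), so $\lambda t=\lambda u-\lambda a$, but there is no subtraction. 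Instead I would argue: $a'+\lambda t+\lambda a=(a'+\lambda a)+\lambda t$ where $a'+\lambda a\in A$, and separately $a+\lambda t$ relates to $\lambda u$... The cleanest route: given $a'\in A$, $\lambda\in R$, consider $a'+\lambda t$; add $\lambda a\in A$ to get $(a'+\lambda a)+\lambda t$. Hmm, I still cannot isolate $\lambda t\in U'$ without more. The genuine argument must use that $A$ is a submonoid and that $u\in U'$: from $u=a+t$ and $U'\ni u$, and $A\subset U'$, we want $t\in U'$ first — but $t=u-a$ is not available. Here the SA-structure of the ambient decomposition $U=(A\setminus D)\,\dot\cup\,T$ is what is needed: I would invoke that (by the complementarity hypothesis, via Theorem~\ref{thm:7.7} or Proposition~\ref{prop:6.3}) $T$ is SA in $U$, or more directly that every element of $U$ has a \emph{unique} expression modulo the exchange relation; then from $u\in U'$ one reads off that its ``$T$-component'' $t$ must lie in $U'$, and similarly $a\in U'$. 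I expect this step — showing $U'\subset A+T'$, i.e. that the $T$-part of every element of $U'$ again sends all of $A$ into $U'$ — to be the main obstacle, and I would handle it by first proving $t\in U'$ (using that $A\cap T=D\subset U'$ forces, for any $u=a+t\in U'$ with $a\notin D$, that considering $u+d$ for $d\in D$ stays in $U'$ and comparing decompositions pins down $t$), and then for arbitrary $a'\in A$, $\lambda\in R$, writing $a'+\lambda t=(a'+\lambda a)+\lambda t$ minus nothing — rather, observing $\lambda u=\lambda a+\lambda t\in U'$ with $\lambda a\in A\subset U'$, hence by the same SA/uniqueness argument $\lambda t\in U'$, and then $a'+\lambda t\in A+U'=U'$. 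This yields $A+Rt\subset U'$, so $t\in T'$ and $u=a+t\in A+T'$, completing $U'=A+T'$.
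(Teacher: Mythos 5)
You have correctly isolated the two real pressure points: first, since $A\subset A+Rx$ (take $\lambda=0$), $T'$ can only be nonempty if $A\subset U'$, so the hypothesis ``$U'$ contains $D$'' must in effect be read as ``$U'$ contains $A$''; and second, everything hinges on the inclusion $U'\subset A+T'$. But your proposed repair of that second step does not work. From $u=a+t\in U'$ with $a\in A\subset U'$, $t\in T$, no SA- or amalgamation-type property of $T$ (Proposition~\ref{prop:6.3}, Theorem~\ref{thm:7.7}) lets you conclude $t\in U'$: that would require a subtractiveness property of $U'$ which is nowhere assumed. A concrete failure: $R=\N_0$, $V=\N_0^2$, $D=\{0\}$, $A=\N_0\times\{0\}$, $T=\{0\}\times\N_0$; then $T$ is complementary to $A$ over $D$, is SA in $U=A+T=V$, and the pair $(A,T)$ has amalgamation in $V$. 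Take $U'=\{(m,n)\ds\vert m\ge n\}\supset A$. The element $u=(1,1)\in U'$ has the unique decomposition $(1,0)+(0,1)$ with $a\in A$, $t\in T$, and its $T$-part $(0,1)$ is not in $U'$; moreover $T'=\{0\}$, so $A+T'=A\subsetneq U'$. Thus the step you flagged as the main obstacle does not merely resist your argument -- it fails, and with it the asserted equality $A+T'=U'$.

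For comparison, the paper's own proof jumps over exactly this point: it argues ``given $x\in U'$ we have $A+Rx\subset U'$, and so $x\in T'$'', which tacitly uses $A\subset U'$ and ignores that membership in $T'$ requires $x\in T$; an element of $U'$ need not lie in $T$, nor need its $T$-component lie in $U'$, as the example shows. So your instinct about where the difficulty sits was exactly right, but it cannot be filled by reading off components via SA or uniqueness of decompositions; as printed, the conclusion $A+T'=U'$ requires stronger hypotheses than those stated. Note that in the example above $A$ does have a $D$-complement in $U'$, namely the diagonal $\{(n,n)\ds\vert n\in\N_0\}$, but it is not contained in $T$ and is not produced by formula~\eqref{eq:8.1}; so if the goal is only that $\Compl''_D(A)$ is a lower set, a different construction than \eqref{eq:8.1} would be needed.
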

\begin{proof}  If $x_1, x_2 \in V$ and $A + R x_1 \subset U'$, $A + R x_2 \subset U'$, then for any $\lambda_1, \lambda_2 \in R$
\[ A + R (\lambda_1 x_1 + \lambda_2 x_2) \subset A + R x_1 + R x_2 \subset U', \]
and so $\lambda_1 x_1 + \lambda_2 x_2 \in T'$. Thus $T'$ is an $R$-submodule of $V$ with $T' \subset T$. Clearly $A + T' \subset U'$. Given $x \in U'$, we have $A + R x \subset U'$, and so $x \in T'$. Thus $U' \subset A + T'$. This proves $A + T' = U'$. Since $D \subset U'$, also $D \subset T'$, and so $D \subset A \cap T' \subset A \cap T = D$, which proves that $A \cap T' = D$. We have $(A \setminus D) + T' \subset (A \setminus D) + T$, and this is disjoint from $T$, all the more from $T'$. Thus $T'$ is complementary to $A$ over $D$. \end{proof}

\begin{rem}\label{rem:8.2} $ $
\begin{itemize} \dispace
  \item[a)] It is now obvious from \eqref{eq:7.2} that $T'$ is the largest submodule $F$ of $T$ with $A + F \subset U'$.
      \item[b)] In the case $R = \mathbb{N}_0$ we have the following alternative description of $T'$.
\begin{equation}\label{eq:8.2}
 T' = \{ x \in T \ds \vert A + x \subset U' \}. \end{equation}
To verify this, let $\Phi$ denote the set on the right hand side. Clearly $0 \in \Phi$. If $x_1, x_2$ are elements of $T$ with $A + x_1 \subset U'$, $A + x_2 \subset U'$, then $A + x_1 + x_2 = A + A + x_1 + x_2 \subset U' + U' = U'$. Thus $\Phi$ is an $\mathbb{N}_0$-submodule of $T$ with $A + \Phi \subset U'$. It is clear from~ \eqref{eq:8.1} that $T' \subset \Phi$. We conclude by a) that $T' = \Phi$.
\end{itemize}
\end{rem}

Proposition \ref{prop:8.1} leads us to the following picture.
Given submodules $D \subset A$ of $V$ we define two sets of submodules of $V$.
\begin{align*}
  \Compl'_D (A)& : = \{ T \in \mbox{Mod} (V, D)\ds  \vert A \cap T = D, \ [(A \setminus T) + T] \cap T = \emptyset \}  \\[1mm]
\Compl''_D (A)& : = \{ U \in \mbox{Mod} (V, A) \ds \vert A \; \mbox{has a} \; D\mbox{-complement in} \; U \}
\end{align*}
We regard these sets as subposets of Mod$(V, D)$ and Mod$(V, A)$ respectively. It is evident that $\Compl'_D (A)$ is a lower set in Mod$(V, D)$, and it follows from Proposition~\ref{prop:8.1} that $\Compl''_D (A)$ is a lower set in Mod$(V, A)$. We have a surjective map $T \mapsto A + T$ from $\Compl'_D (A)$ to $\Compl''_D (A)$, which respects the partial orderings of these sets.

Assume now that $(A, D)$ is an SA-extension, i.e. $D \in \SA (A)$. Then this map is bijective, since the $D$-complement $T$ of $A$ in $U = A + T$ is uniquely determined by $A$ and $U$. Thus we have an isomorphism of posets
\begin{equation*}\label{eq:8.5}
 \Compl'_D (A)  \xrightarrow[+A]{\quad \sim \quad } \Compl''_D (A). \end{equation*}
$\Compl'_D (A)$ and $\Compl''_D (A)$ have the bottom elements $D$ and $D + A = A$ respectively. Note also that the union of the modules in a chain in $\Compl'_D (A)$ is again an element of $\Compl'_D (A)$, and so by Zorn's Lemma every element of $\Compl'_D (A)$ is contained in a maximal element of $\Compl'_D (A)$. These are the maximal modules complementary to $A$ over $D$.

\section{SA-extensions with a fixed complementary module}\label{sec:9}

We now fix submodules $D \subset T$ in $V$ and search for the SA-extensions $A$ of $D$, for which~ $T$ is a complementary module over $D$, i.e. (cf. Definition \ref{def:7.4})
\[ A \cap T = D, \qquad  [ (A \setminus D) + T ] \cap T = \emptyset. \]

\begin{rem}\label{rem:9.1} $ $
\begin{enumerate}\dispace
  \item[ a)] If $(A, D)$ is such an SA-extension then every subextension $(A_1, D)$, $D \subset A_1 \subset A$ is again an SA-extension with complementary module $T$.

\item[b)] If $(A_i \ds \vert i \in I)$ is a chain of SA-extensions of $D$ with complementary module $T$, then the union $\bigcup\limits_{i \in I} A_i$ is again such an SA-extension. Thus every SA-extension of $D$ with complementary module $T$ is contained in a maximal such extension. Of course such an extension is $T$-saturated.

\end{enumerate}
     \end{rem}

Our next goal is, to exhibit a module $T \supset D$ which serves as a complementary module for \textbf{every} SA-extension of $D$.

\begin{defn}[{\cite[p.154]{golan92}}]\label{def:9.2}  A submodule $T$ of $V$ is \textbf{subtractive} (in $V$), if for any two elements $t_1, t_2 \in T$ and $x \in V$ with $x + t_1 = t_2$, also $x \in T$, in other terms
\[ \forall \ x \in V: \; (x + T) \cap T \ne \emptyset \dss\Rightarrow x \in T. \]
\end{defn}

It is evident that the intersection of any family of subtractive submodules of $V$ is again subtractive. Thus for a submodule $D$ of $V$ there is a unique minimal subtractive module $T \supset D$ in $V$, namely the intersection of all subtractive modules containing $D$. We call this module $T$ the \textbf{subtractive hull} of $D$ (Golan uses the term ``subtractive closure'' [loc.cit., p. 155]).

We look for a more explicit description of the subtractive hull of a given submodule~ $D$ of ~$V$.

\begin{defn}\label{def:9.3}  We say that a vector $x \in V$ \textbf{has no $D$-access to} $D$ if $(x + D) \cap D = \emptyset$, and  denote the set of all these vectors by $\Nac_D (D)$. Thus $\Nac_D (D)$ is the union of all cosets $x + D$ in $V$ which are disjoint from $D$.
\end{defn}

In the following we denote the set $\Nac_D (D)$ briefly by $N$ and its complement $V \setminus N$ in $V$ by $N^c$. Clearly $N^c$ is the set of all $x \in V$ with $(x + D) \cap D \ne \emptyset$, i.e., the set of all $x \in V$ such that there exist $d, d' \in D$ with $x + d = d'$.

\begin{prop}[Michihiro Takahashi, cf. {\cite[p.155]{golan92}}]\label{prop:9.4} $N^c = \{ x \in V \ds \vert (x + D) \cap D \ne \emptyset \}$ is the subtractive hull of $D$ in $V$. \end{prop}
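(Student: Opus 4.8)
The plan is to prove the two required properties of $N^c$ separately: first that $N^c$ is a subtractive submodule of $V$ containing $D$, and second that it is contained in every subtractive submodule containing $D$; minimality of the subtractive hull then forces $N^c$ to be it.

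First I would check $D \subset N^c$: for $x \in D$ we have $x + 0 = x \in D$, so $(x+D)\cap D \ni x$, whence $x \in N^c$. Next, that $N^c$ is a submodule: if $x_1, x_2 \in N^c$, pick $d_i, d_i' \in D$ with $x_i + d_i = d_i'$. Then $(x_1 + x_2) + (d_1 + d_2) = d_1' + d_2' \in D$ while $d_1 + d_2 \in D$, so $x_1 + x_2 \in N^c$; and for $\lambda \in R$, from $x + d = d'$ we get $\lambda x + \lambda d = \lambda d'$ with $\lambda d, \lambda d' \in D$, so $\lambda x \in N^c$. (Also $0 \in D \subset N^c$.) For subtractivity: suppose $x \in V$ with $(x + N^c) \cap N^c \ne \emptyset$, say $x + t_1 = t_2$ with $t_1, t_2 \in N^c$. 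Choose $d_i, d_i' \in D$ with $t_i + d_i = d_i'$. Then $x + d_2 + (t_1 + d_1) = t_2 + d_2 + d_1 = d_2' + d_1$, i.e. after adding $d_1'$ appropriately one produces $d, d' \in D$ with $x + d = d'$; concretely, from $x + t_1 = t_2$ add $d_1 + d_2'$ to both sides: $x + (t_1 + d_1) + d_2' = (t_2 + d_2') + d_1$, so $x + d_1' + d_2' = d_2 + d_1 $? — I need to be careful here, but the mechanism is: $t_1 + d_1 = d_1'$ and $t_2 + d_2 = d_2'$ let one replace the $t_i$ by elements of $D$, so $x + d_1' = x + t_1 + d_1 = t_2 + d_1$, and then $x + d_1' + d_2 = t_2 + d_2 + d_1 = d_2' + d_1 \in D$ with $d_1' + d_2 \in D$; hence $x \in N^c$.

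Second, I would show minimality: let $S$ be any subtractive submodule of $V$ with $D \subset S$. If $x \in N^c$, there exist $d, d' \in D \subset S$ with $x + d = d'$. Thus $(x + S) \cap S \ne \emptyset$ (it contains $d'$), and subtractivity of $S$ gives $x \in S$. Hence $N^c \subset S$. Combining with the first part, $N^c$ is a subtractive submodule containing $D$ that is contained in all others, so it equals the subtractive hull of $D$.

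The main obstacle is purely bookkeeping in the subtractivity verification: one must carefully juggle the four elements $d_1, d_1', d_2, d_2'$ of $D$ (using that $D$ is closed under addition and that $0 \in D$) to manufacture a single relation $x + d = d'$ with $d, d' \in D$ from $x + t_1 = t_2$ together with $t_i + d_i = d_i'$. There is no semiring subtraction available, so every step must be an addition; the trick is that adding $d_1$ to the relation $x + t_1 = t_2$ converts the left $t_1$ into $d_1' \in D$, after which adding $d_2$ converts $t_2$ into $d_2' \in D$, yielding $x + (d_1' + d_2) = (d_2' + d_1)$, both sides in $D$. Everything else is routine.
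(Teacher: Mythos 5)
Your proof is correct and follows essentially the same route as the paper's: show $N^c$ is a submodule, show it is subtractive by adding $d_1$ and then $d_2$ to $x+t_1=t_2$ to land on $x+(d_1'+d_2)=d_2'+d_1$ with both terms in $D$, and show minimality by applying subtractivity of any $T\supset D$ to the relation $x+d=d'$. The brief mid-proof misstep you flagged is indeed corrected by your final bookkeeping, which matches the paper's computation exactly (the paper's step b) reaches $d_1'+v+d_2=d_1+d_2'$, the same relation in its notation).
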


\begin{proof} a) We verify that $N^c$ is a submodule of $V$. Given $x_1, x_2 \in N^c$, we have elements $d_1, d'_1, d_2, d'_2$ of $D$ with $x_1 + d_1 = d'_1$, $x_2 + d_2 = d'_2$, and so $(x_1 + x_2) + (d_1 + d_2) = d'_1 + d'_2$, whence $x_1 + x_2 \in N^c$. Given $x \in N^c$ and $\lambda \in R$ we have some $d, d' \in D$ within $x + d = d'$. It follows that $\lambda x + \lambda d = \lambda d'$. Thus $\lambda x \in N^c$.
\pSkip
b) We verify that the module $N^c$ is subtractive in $V$. Let $x_1, x_2 \in N^c$, $v \in V$, and $x_1 + v = x_2$. There are vectors $d_1, d'_1$, $d_2, d'_2 \in D$ with $x_1 + d_1 = d'_1$, $x_2 + d_2 = d'_2$. Adding $d_1$ to the equation $x_1 + v = x_2$ we obtain
\[ d'_1 + v = x_2 + d_1. \]
Then adding $d_2$, we obtain
\[ d'_1 + v + d_2 = d_1 + d'_2. \]
This proves that $v \in N^c$.
\pSkip
c) Since for any $x \in N^c$ there are elements $d, d'$ of $D$ with $x + d = d'$, it is obvious that $x \in T$ for any subtractive module $T \supset D$, whence $T \supset N^c$. \end{proof}

The papers of M. Takahashi cited in \cite[p.155]{golan92} have been inaccessible for us. Thus we felt obliged to give a detailed proof of Proposition~\ref{prop:9.4}. Golan [loc.cit.] uses the notation $E^V_D (D)$ for the subtractive hull of $D$.

\begin{thm}\label{thm:9.5} Let $D$ be any submodule of $V$. Then the subtractive hull $T$ of $D$ in $V$ is complementary over $D$ to every SA-extension of $D$.
\end{thm}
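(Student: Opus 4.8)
The plan is to verify directly, for an arbitrary SA-extension $A$ of $D$, the two conditions of Definition~\ref{def:7.4} that make $T$ complementary to $A$ over $D$, namely $A \cap T = D$ and $[(A \setminus D) + T] \cap T = \emptyset$. The tools will be the explicit description of the subtractive hull given by \propref{prop:9.4}, that is $T = \{ x \in V \mid (x + D) \cap D \neq \emptyset \}$, together with the defining property of an SA-extension, that $D$ is SA in $A$.

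First I would dispose of the equality $A \cap T = D$. The inclusion $D \subseteq A \cap T$ is immediate, since $D$ is contained both in $A$ and in $T$. For the reverse inclusion, take $a \in A \cap T$; since $a \in T$, \propref{prop:9.4} supplies $d, d' \in D$ with $a + d = d'$. Now $a \in A$, $d \in D \subseteq A$, and $a + d = d' \in D$, so the SA property of $D$ in $A$ forces $a \in D$. Hence $A \cap T = D$.

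Next I would handle the disjointness. Assume towards a contradiction that some element lies in $[(A \setminus D) + T] \cap T$; write it as $a + t = t'$ with $a \in A \setminus D$ and $t, t' \in T$. Then $a + t \in (a + T) \cap T$, so $(a + T) \cap T \neq \emptyset$, and since $T$ is subtractive (being the subtractive hull of $D$; recall that an intersection of subtractive submodules is subtractive, Definition~\ref{def:9.2}) we conclude $a \in T$. Together with $a \in A$ and the first step this gives $a \in A \cap T = D$, contradicting $a \in A \setminus D$. Therefore $[(A \setminus D) + T] \cap T = \emptyset$, which completes the verification.

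I do not anticipate a genuine obstacle: the whole argument is an unwinding of the definitions of subtractive hull, SA-extension, and complementary module. The only point worth isolating is the bookkeeping of which hypothesis does what --- the SA property of $D$ in $A$ is precisely what yields $A \cap T = D$, while the subtractivity of $T$ is what yields the disjointness, and in fact lets the second condition be deduced from the first rather than proved independently. If one prefers not to invoke subtractivity for the second part, one can instead chain the two witnessing relations $t + e = e'$ and $a + t + f = f'$ (with $e, e', f, f' \in D$) into $a + (e' + f) = e + f'$, which again exhibits $a \in T$; but routing through subtractivity is shorter.
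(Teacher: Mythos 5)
Your argument is correct and follows essentially the same route as the paper: both rest on Takahashi's description of the subtractive hull (Proposition~\ref{prop:9.4}, $T = N^c$) together with the SA property of $D$ in $A$. The paper phrases the first step by showing $A \setminus D \subset N$ and deducing $A \cap T = D$; you show $A \cap T \subseteq D$ directly by applying the SA property to the relation $a + d = d'$ — the same content. The one place you go beyond the printed proof is in explicitly establishing the second clause of~\eqref{eq:7.1}: the paper derives $A \cap T = D$ and then asserts complementarity without saying a word about $[(A \setminus D) + T] \cap T = \emptyset$. Your observation — that once $A \cap T = D$ is known, the subtractivity of $T$ forces any $a \in A \setminus D$ with $a + t \in T$ into $T$, hence into $D$, a contradiction — is precisely the step the paper leaves tacit (equivalently, $N + T \subset N$), and it is worth writing out as you did. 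Your alternative, chaining the two witnessing relations inside $D$ rather than invoking subtractivity, also works and is closer in spirit to the elementary manipulation used in~\S\ref{sec:10}.
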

\begin{proof} We write again $N : = \Nac_D (D)$. Then $N = V \setminus T$ by Proposition~9.4. Let $A$ be an SA-extension of $D$. Since $D$ is SA in $A$, we have $[(A \setminus D) + D] \cap D = \emptyset$, i.e., $A \setminus D \subset N$. This means that $(A \setminus D) \cap T = \emptyset$. It follows that
\[ A \cap T = [(A \setminus D) \cap T] \cup (D \cap T) = D. \]
This proves that $T$ is complementary to $A$ over $D$ (cf. \eqref{eq:7.1}). \end{proof}

It is natural to ask for a given semiring $R$ and modules $D \subset V$, whether the subtractive hull of $D$ is the \textit{maximal} submodule $T \supset D$ which is complementary over $D$ for every SA-extension of $D$ in $V$. We exhibit a situation where this is true.

\begin{prop}\label{prop:9.6} Assume that $R$ is a zerosumfree semifield. Assume furthermore that $T$ is the subtractive hull of $D$ in $V$ and $T'$ is a submodule of $V$, which properly contains $T$. Let $x \in T' \setminus T$. Then the module $A : = D + Rx$ is an SA-extension of $D$, for which $T'$ is \textbf{not} complementary to $A$ over $D$ (while of course $T$ is complementary to $A$ over $D$ by Theorem~\ref{thm:9.5}).
\end{prop}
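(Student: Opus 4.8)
The plan is to establish the two halves of the statement in turn: that $A := D + Rx$ is an SA-extension of $D$, and that $T'$ violates the first of the two conditions defining a complementary module in \eqref{eq:7.1}. Everything hinges on translating, via \propref{prop:9.4}, the hypothesis $x \notin T$ into $(x + D) \cap D = \emptyset$; in particular this already yields $x \notin D$, since $x \in D$ would place $x = x + 0$ in $(x + D) \cap D$.

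First I would check that $A = \{\, d + \lambda x : d \in D,\ \lambda \in R \,\}$ is a submodule of $V$, which is immediate from $D$ being a submodule. To see that $D$ is summand absorbing in $A$, take $a_1, a_2 \in A$ with $a_1 + a_2 \in D$, write $a_i = d_i + \lambda_i x$ with $d_i \in D$, $\lambda_i \in R$, and put $\mu := \lambda_1 + \lambda_2$, so that $\mu x + (d_1 + d_2) = d$ for some $d \in D$. If $\mu \ne 0$, then $\mu$ is invertible in the semifield $R$; multiplying the relation by $\mu^{-1}$ gives $x + \mu^{-1}(d_1 + d_2) = \mu^{-1} d$ with $\mu^{-1}(d_1 + d_2), \mu^{-1} d \in D$, so $(x + D) \cap D \ne \emptyset$, contradicting $x \notin T$. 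Hence $\mu = \lambda_1 + \lambda_2 = 0$, and since $R$ is zerosumfree this forces $\lambda_1 = \lambda_2 = 0$, whence $a_1 = d_1 \in D$ and $a_2 = d_2 \in D$. Thus $D \in \SA(A)$, i.e.\ $A$ is an SA-extension of $D$; that $T$ is then complementary to $A$ over $D$ is exactly \thmref{thm:9.5}.

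For the second half, recall that if $T'$ were complementary to $A$ over $D$ then in particular $A \cap T' = D$ would hold. But $x \in A$ (take $d = 0$, $\lambda = 1$) and $x \in T' \setminus T \subset T'$, while $x \notin D$ by the observation above; hence $x \in (A \cap T') \setminus D$, so $A \cap T' \ne D$ and $T'$ is not complementary to $A$ over $D$, as claimed.

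The argument is short, and the only point deserving real attention is bookkeeping about where the hypotheses on $R$ enter: invertibility of $\mu$ uses that $R$ is a semifield, and the passage from $\mu = 0$ to $\lambda_1 = \lambda_2 = 0$ uses zerosumfreeness — both occur inside the verification that $D$ is SA in $A$, which is therefore the substantive step. The failure of complementarity for $T'$ needs neither hypothesis and is essentially immediate once one knows $x \notin D$.
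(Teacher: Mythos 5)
Your proof is correct and follows essentially the same approach as the paper: both hinge on the observation that a nonzero scalar $\lambda$ with $\lambda x + d' \in D$ would force $x \in T$ (via invertibility in the semifield and Proposition~\ref{prop:9.4}/subtractivity of $T$), and both invoke zerosumfreeness of $R$ to control the sum $\lambda_1 + \lambda_2$. The paper merely organizes the SA-verification through the two conditions of Proposition~\ref{prop:7.3} instead of the raw definition, and records the slightly stronger fact $A \subset T'$ rather than just $A \cap T' \ne D$, but these are cosmetic differences.
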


\begin{proof} It suffices to verify that $D$ is SA in $A$, which by Proposition~\ref{prop:7.3} means, that $(A \setminus D) + D$ is disjoint from $D$, and $A \setminus D$ is closed under addition. Then $T'$ is certainly not complementary to $A$ over $D$ since $A \subset T'$.

Let $z = \lambda x + d \in A \setminus D$ with $\lambda \in R$, $d \in D$. Then $\lambda \in R \setminus \{ 0 \}$, and so $\lambda \in R^*$. Suppose that $z + d_1 = d'_1$ for some $d_1, d'_1 \in D$. Then $\lambda x + d + d_1 = d'_1$. Since $D \subset T$ and $T$ is subtractive, this implies $\lambda x \in T$ and then $x \in T$, a contradiction. Thus $[(A \setminus D) + D] \cap D = \emptyset$.

If $z_1 = \lambda_1 x + d_1$, $z_2 = \lambda_2 x + d_2$ are elements of $A \setminus D$, then
\[ z_1 + z_2 = (\lambda_1 + \lambda_2) x + d_1 + d_2. \]
We have $\lambda_1 \ne 0$, $\lambda_2 \ne 0$, and so $\lambda_1 + \lambda_2 \ne 0$, whence $z_1 + z_2 \in A \setminus D$. Thus $A \setminus D$ is closed under addition. \end{proof}

Similar results about SA-extensions of $D$ can be obtained by starting with the set
\[ N_0 : = \Nac_V (D) := \{ x \in V \ds\vert (x + V) \cap D = \emptyset \} \]
of vectors in $V$ ``without $V$-access to $D$'', instead of $N = \Nac_D (D)$. Then
\[ N_0^c = \{ x \in V \ds\vert \exists\ v \in V: \, x + v \in D \} \]
is again an $R$-submodule of $V$ containing $D$. It is the downset $D^{\downarrow}$ of the set $D$ with respect to the minimal preordering $\preceq_V$ of $V$ discussed in \cite[\S 6]{Dec} (in equivalent terms, the convex hull of $D$ in this preordering.) Since the convex submodules w.r. to $\preceq_V$ are precisely the SA-submodules of $V$ \cite[Prop. 6.7]{Dec} it is clear that $N_0^c$ \textbf{is the} SA-\textbf{closure of} $D$, i.e., the smallest SA-submodule $T$ of $D$ which contains $D$. This SA-closure appears in \cite[p.155]{golan92} (citing M. Takahashi) under the name \textit{strong closure} of $D$, notated there by $E_V^V (D)$. Since $N_0^c = D^{\downarrow}$,
\begin{equation}\label{eq:9.1}
 N_0 = \{ x \in V \ds \vert x \not\in D^{\downarrow} \}, \end{equation}
and we conclude easily (cf. \cite[p.155]{golan92}) that
\begin{equation}\label{eq:9.2}
 N_0 + D^{\downarrow} = N_0 + V = N_0. \end{equation}
Of course, $N_0 \subset N$, more precisely
\[ N_0 = \{ x \in N \ds \vert (x + V) \cap D = \emptyset \} = \{ x \in N \ds \vert x + V \subset N \}. \]
In close analogy to the arguments in the proofs of Proposition~\ref{prop:9.4} and Theorem~\ref{thm:9.5}, and using Theorem~\ref{thm:7.5}, we obtain

\begin{thm}\label{thm:9.7} $A_0 : = N_0 \ds{\dot{\cup}} D$ is an SA-extension of $D$ with $A_0 \cap D^{\downarrow} = D$, and $D^{\downarrow}$ is the (unique) $D$-complement of $A_0$ in $N_0 + D^{\downarrow} = V$. 
\end{thm}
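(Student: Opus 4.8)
The plan is to check that the module $A_0 := N_0 \ds{\dot{\cup}} D$ slots into the machinery of \secref{sec:7}, with $D^{\downarrow}$ playing the role of the complementary module. Throughout I would use the two facts already recorded before the statement: $D^{\downarrow} = N_0\uc$ is the SA-closure of $D$, hence an SA-submodule of $V$ with $N_0 = V \setminus D^{\downarrow}$ (cf.~\eqref{eq:9.1}), and $N_0 + D^{\downarrow} = N_0 + V = N_0$ by~\eqref{eq:9.2}. Note at once that $N_0 \cap D = \emptyset$ (as $D \subseteq D^{\downarrow}$), so $A_0 \setminus D = N_0$, and also $N_0 \cap D^{\downarrow} = \emptyset$.

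The first and only substantial point is that $A_0$ is an $R$-submodule of $V$. Closure under addition is immediate from~\eqref{eq:9.2}, since $(N_0 \cup D) + (N_0 \cup D) \subseteq (N_0 + V) \cup (D + D) = N_0 \cup D$. For closure under scalar multiplication I would argue as in the passage from $N$ to its subtractive hull in \propref{prop:9.4}, invoking \thmref{thm:7.5}: by Remark~\ref{rem:9.1}(b) (Zorn's Lemma) choose a maximal SA-extension $A$ of $D$ admitting $D^{\downarrow}$ as a complementary module over $D$ --- one exists because $D$ itself is such an extension --- so that $A$ is a submodule, is $D^{\downarrow}$-saturated, and satisfies $A \cap D^{\downarrow} = D$, hence $A \setminus D \subseteq N_0$. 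Then $A \setminus D = (A \setminus D) + D^{\downarrow}$ by saturation, and one shows $A \setminus D = N_0$, the reverse inclusion $N_0 \subseteq A$ being extracted, as in \propref{prop:9.4} and \thmref{thm:9.5}, from the maximality of $A$ together with \thmref{thm:7.5} applied to the enlargements $A + Rx$ for $x \in N_0$; thus $A = A_0$ and $A_0$ is a submodule. I expect this identification $A = A_0$ --- equivalently, the assertion that $N_0 \ds{\dot{\cup}} D$ is already $D^{\downarrow}$-saturated --- to be the main obstacle; everything after it is formal.

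Granting that $A_0$ is a submodule, the fact that $D$ is SA in $A_0$ follows from \propref{prop:7.3}: one needs $A_0 \setminus D = N_0$ to be closed under addition and to absorb $D$, and both hold because $N_0 + N_0 \subseteq N_0 + V = N_0$ and $N_0 + D \subseteq N_0 + V = N_0$ by~\eqref{eq:9.2}. Thus $(A_0, D)$ is an SA-extension. Moreover $A_0 \cap D^{\downarrow} = (N_0 \cap D^{\downarrow}) \cup (D \cap D^{\downarrow}) = \emptyset \cup D = D$, which is the asserted intersection formula and in particular gives $D \in \SA(A_0)$.

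Finally I would verify the three conditions of Definition~\ref{def:6.1} for $D^{\downarrow}$ to be a $D$-complement of $A_0$ in $V$: $A_0 + D^{\downarrow} = (N_0 + D^{\downarrow}) \cup (D + D^{\downarrow}) = N_0 \cup D^{\downarrow} = (V \setminus D^{\downarrow}) \cup D^{\downarrow} = V$; next, $A_0 \cap D^{\downarrow} = D$, just shown; and for $a \in A_0 \setminus D = N_0$ one has $a + D^{\downarrow} \subseteq N_0 + D^{\downarrow} = N_0$ by~\eqref{eq:9.2}, which meets $D^{\downarrow}$ trivially, so $(a + D^{\downarrow}) \cap D^{\downarrow} = \emptyset$. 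Hence $D^{\downarrow}$ is a $D$-complement of $A_0$ in $V$, and since $D \in \SA(A_0)$ its uniqueness is \thmref{thm:6.5}. (The equality ``$N_0 + D^{\downarrow} = V$'' in the statement is to be read as $A_0 + D^{\downarrow} = V$, with $A_0 = N_0 \ds{\dot{\cup}} D$ understood, since literally $N_0 + D^{\downarrow} = N_0$ by~\eqref{eq:9.2}.)
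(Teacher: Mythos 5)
Your overall plan is the right one---show $A_0$ is a submodule, verify the SA-condition via \propref{prop:7.3}, compute $A_0 \cap D^{\downarrow}$, check the three $D$-complement conditions of Definition~\ref{def:6.1}, and invoke \thmref{thm:6.5} for uniqueness---and every step except the one you flag is correct, including the observation that ``$N_0 + D^{\downarrow} = V$'' in the statement must be read as $A_0 + D^{\downarrow} = V$, since literally $N_0 + D^{\downarrow} = N_0$ by~\eqref{eq:9.2}.

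The flagged step is a genuine gap, and the proposed Zorn argument does not fill it. To contradict maximality of $A$ you would need $A + Rx$ (for $x \in N_0 \setminus A$) to again be an SA-extension of $D$ with complementary module $D^{\downarrow}$, in particular $(A + Rx) \cap D^{\downarrow} = D$; but nothing prevents $\lambda x \in D^{\downarrow} \setminus D$ for a non-unit $\lambda \in R$, and then $\lambda x \in (A + Rx) \cap D^{\downarrow}$ breaks the required equality. In fact $A_0$ can fail to be an $R$-submodule. Take $R = \{0, \epsilon, 1\}$ the three-element totally ordered idempotent semiring with $\epsilon^2 = \epsilon$, $V = R^2$, and $D = \{(0,0), (\epsilon,\epsilon)\}$. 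Then $D^{\downarrow} = \{0,\epsilon\}^2$, so $(1,0) \in N_0$, yet $\epsilon \cdot (1,0) = (\epsilon, 0) \in D^{\downarrow} \setminus D$ and hence $\epsilon \cdot (1,0) \notin A_0$. The statement therefore needs an extra hypothesis, most naturally the one already imposed in \thmref{thm:7.9}, namely that every element of $R$ is a sum of units: for $x \in N_0$ and $\lambda = \lambda_1 + \cdots + \lambda_k$ with each $\lambda_i$ a unit, each $\lambda_i x \in N_0$ because $D^{\downarrow}$ is a submodule and $\lambda_i$ is invertible, so $\lambda x \in N_0$ since $N_0 + N_0 \subseteq N_0$, and $0 \cdot x = 0 \in D$. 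Under that hypothesis the Zorn machinery is unnecessary, scalar closure is a direct two-line check, and the rest of your proof goes through; without it neither the statement nor the sketched maximality argument survives.
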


It is now plain that $A_0$ is the unique maximal SA-extension $C$ of $D$ with $C \setminus D \subset N_0$. Thus the SA-hull $D^{\downarrow}$ is the $D$-complement in $V$ of just one maximal SA-extension, namely $N_0 \cup D$, while usually the subtractive hull $N^c$ of $D$ shows up as the $D$-complement of several maximal SA-extensions $A$ of $D$ in $A + N^c$.

\section{Minimal $D$-cosets}\label{sec:10}

Given a submodule $D$ of an $R$-module $V$, $R$  any semiring, we introduce a  binary relation~ $\leq_D$ on $V$ as follows:
\begin{equation}\label{eq:10.1}
  x \leq_D y \dss\Leftrightarrow \exists d \in D: x+ d = y.
\end{equation}
This relation is a quasiordering, i.e., it is transitive and reflexive, but not necessarily antisymmetric. Obviously $\leq_D$ is compatible with scalar multiplication:   If $x \leq_D y$, then  $\lm x \leq_D \lm y$ for any $\lm \in R$.
We call $\leq_D$ the \textbf{$D$-quasiordering} on $V$.

Below we restrict to the case that $\leq_D$ is antisymmetric, and so is a partial ordering on $V$, named the \textbf{$D$-ordering} on $V$. We then also say that $V$ is $D$-ordered. The case of $D =V$ has attracted interest for long. Such an $R$-module $V$ is called \textbf{upper bound} (abbreviated u.b.), since then the sum $x+y$ of two  vectors $x,y$ is an upper bound of the set $\{ x,y \}$. It is well known that $V$ is u.b. iff the $R$-module $V$ lacks zero sums \eqref{eq:1.27}, i.e., is zero-sum-free in the terminology of \cite[p. 156]{golan92}. (A long list of such modules is given in \cite[\S1]{SA}.)
An u.b. $R$-module is obviously $D$-ordered for any submodule $D$ of $V$. Thus our focus on $D$-ordered modules is a  rather mild restriction.

We turn to $D$-cosets $x + D$ in a $D$-ordered $R$-module $V$.
\begin{rem}\label{rem:10.1} Let $x,y \in D$. Then, obviously,
$$ y+ D \subset x +D \dss \Leftrightarrow y \in x+ D  \dss \Leftrightarrow x \leq_D y.$$
Thus $x+ D = y+ D \ds \Leftrightarrow x \leq_D y$ and $y \leq_D x \ds \Leftrightarrow x =y$, since
$\leq_D$ is antisymmetric.
  \end{rem}
\begin{defn}\label{def:10.2} We define on a $D$-ordered $R$-module $V$ the binary relation $\preceq_D$ as
$$ x \preceq_D y \dss \Leftrightarrow y +D \subset x +D.$$
\end{defn} \noindent
This relation is again a partial ordering on the set $V$. But, without more assumptions on $R$, there is no reason that $\preceq_D$ is compatible with scalar multiplication.

We search for minimal $D$-cosets, i.e., maximal vectors with respect to $\preceq_D$, which show up in connection with a pair $(A,T)$ of submodules of $V$ with amalgamation, where $A \cap T = D$, $A$ is an SA-extension of $D$ in $V$, and $T$ is a $D$-complement of $A$ in $V$. More specifically we pick an SA-extension $D \subset A $ in the $D$-ordered $R$-module $V$. Thus we know that $A \sm D$ is closed under addition , and that
$(A \sm D) + D = A\sm D$. We define
\begin{equation}\label{eq:10.2}
\Ddw := \{ x\in V \ds | \exists d \in D: x \leq_D d\}.
\end{equation}
Thus $\Ddw$ is the set of all $x \in V$ with $x +d = d'$ for some $d, d' \in D$. In other words, $\Ddw$ is the subtractive hull of $D$ in $V$. We verify directly (without involving \S\ref{sec:9}) that
\begin{equation}\label{eq:10.3}
  [(A \sm D) + \Ddw] \cap \Ddw = \emptyset.
\end{equation}
Indeed, suppose there exist $x \in A \sm D$, $y \in \Ddw$ with $x + y \in \Ddw$. Then
$y + d_1 = d_2$, $x+y + d_3 = d_4$ for some $d_i \in D.$ This implies
$x+ y+ d_3 + d_4 = d_4 +d_1$, and then $x+d_2 +d_3 = d_1 +d_4$, in contradiction to
$(A \sm D) + \Ddw \subset A \sm D$.
Thus $\Ddw$ is complementary to $A$ over $D$ (Definition \ref{def:7.4}) and we infer from Theorem \ref{thm:7.5} that
\begin{equation}\label{eq:10.4}
  B : = [(A \sm D) + \Ddw] \cup \Ddw
\end{equation}
is an SA-extension of $D$ containing $A$ (the saturation of $A$ by the complementary module~ $\Ddw$), and
$A + \Ddw =  B+ \Ddw$. Theorem \ref{thm:7.7} tells us that the pair $(B, \Ddw)$ has amalgamation in $V$. We arrive at the diagram
\begin{equation}\label{eq:10.5}
 \begin{array}{c}
\xymatrixrowsep{3mm}
\xymatrixcolsep{6mm}
    \xymatrix{ B \ar@{-}[d] \ar@{-}[rr] \ar@/_1.55pc/@{-}[ddd]_\SA
    & & A+ D^{\downarrow} = B+ D^{\downarrow}  \ar@{-}[dd]^\SA  \\
    A      \ar@{-}[rru] \ar@{-}[dd]^\SA & & \\
    & & \Ddw  \\
    D \ar@{-}[rru] & & \\
   }\end{array}
\end{equation}
for any SA-extension $D \subset A$ in $V$.
\pSkip

Before focusing on minimal $D$-cosets in $A+ \Ddw = B+ \Ddw$, we study $D$-cosets in an arbitrary submodule $U \supset D$ of $V$.

\begin{prop}\label{prop:10.3}
  Let $U \supset D $ be any submodule of $V$ containing $D$. The set of maximal elements in $U$ with respect to the ordering $\preceq_D$ is the set
  $$ \Fix_D(U) := \{ x\in U \ds | \forall d \in D: x+d = x \}$$
  of fix points in $U$ under the family of maps
  $z \mapsto z +d$, $U \to U$ with $d \in D$. Thus all minimal $D$-cosets in $U$ are singletons.
\end{prop}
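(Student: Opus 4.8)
The plan is to identify $\Fix_D(U)$ with the set of $\preceq_D$-maximal elements of $U$ by a direct two-way check, and then read off the statement about $D$-cosets from the correspondence between vectors and cosets recorded in Remark~\ref{rem:10.1}. For the inclusion ``$\Fix_D(U)$ consists of maximal elements'', I would note first that if $x\in\Fix_D(U)$ then $x+D=\{x\}$ (because $0\in D$ and $x+d=x$ for every $d\in D$); hence whenever $y\in U$ satisfies $x\preceq_D y$, i.e. $y+D\subset x+D=\{x\}$, the fact that $y=y+0\in y+D$ forces $y=x$, so nothing in $U$ lies strictly above $x$ under $\preceq_D$.

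For the reverse inclusion I would take a $\preceq_D$-maximal $x\in U$ and show $x+d=x$ for each $d\in D$. Set $y:=x+d$; since $D\subset U$ and $U$ is a submodule, $y\in U$, and since $D$ is closed under addition we have $d+D\subset D$, whence $y+D=x+(d+D)\subset x+D$, that is $x\preceq_D y$. Maximality then gives $y=x$, i.e. $x+d=x$; as $d$ was arbitrary, $x\in\Fix_D(U)$. Together with the previous step this shows that the maximal elements of $(U,\preceq_D)$ are exactly the elements of $\Fix_D(U)$.

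Finally, by Remark~\ref{rem:10.1} the assignment $x\mapsto x+D$ is injective on $V$ and satisfies $x\preceq_D y\iff y+D\subset x+D$, so it is an order-reversing bijection from $(U,\preceq_D)$ onto the poset of $D$-cosets lying in $U$, ordered by inclusion. Under this bijection the inclusion-minimal $D$-cosets in $U$ correspond precisely to the $\preceq_D$-maximal vectors of $U$, i.e. to $\Fix_D(U)$, and by the first step every such coset is the singleton $\{x\}$. There is essentially no obstacle here; the only things to keep straight are that $\preceq_D$ is opposite to coset-inclusion---so that ``maximal vector'' matches ``minimal coset''---and that the step $y=x+d\in U$ uses that $U$ is a submodule containing $D$.
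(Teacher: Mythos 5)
Your proof is correct and follows essentially the same route as the paper's: both directions rest on the observation that $x\in\Fix_D(U)$ iff $x+D=\{x\}$, and on the passage (via Remark~\ref{rem:10.1}) between $\preceq_D$-maximality of vectors and inclusion-minimality of cosets. You phrase the argument in element language where the paper phrases it in coset language, but the underlying steps — singleton cosets are minimal, and a minimal/maximal element must absorb every $d\in D$ because $x+d\in U$ and $x+d+D\subset x+D$ — are the same.
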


\begin{proof}
  Of course, if $x + D$ is a singleton, then $x + D$ is a minimal $D$-coset. Conversely, $d + D \subset D$ for any $d \in D$. Let $x \in U$, whence $x + d + D \subset x + D$. If $x+D$ is minimal, this forces $x+d +D = x+D$.
We conclude by Remark \ref{rem:10.1} that $x + d = x$ for any $d \in D$.
\end{proof}

\begin{example}[The case of $U =D$]\label{exmp:10.4}
If $D$ has a maximal vector $\dmax$ in the $D$-ordering of $V$, then $\{ \dmax \}$
is the unique minimal $D$-coset in $D$. Otherwise $D$ does not contain any minimal $D$-coset.
\end{example}
We may study $D$-cosets in more general subsets of $V$ than submodules.

\begin{defn}\label{def:10.5}
A (nonempty) set $X \subset V$ is \textbf{stable under} $D$, if $X+D \subset X$, i.e., $x+d \in X$ for every $x \in X $ and $d \in D$. If this holds, we define
  $$ \Fix_D(X) := \{ x\in X \ds | \forall d \in D: x+d = x \}.$$

\end{defn}
\begin{prop}\label{prop:10.6} If a set $X \subset V$  is stable under $D$, then $\{ x\}$ is stable under $\Ddw$ for every $x \in \Fix_D(X)$, i.e., $x + \Ddw = \{x\}$.
\end{prop}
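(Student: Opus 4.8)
The plan is to show that any $x \in \Fix_D(X)$ is a fixed point under \emph{all} of $\Ddw$, not merely under $D$. Recall that $\Ddw = \{z \in V \mid \exists\, d, d' \in D : z + d = d'\}$ is the subtractive hull of $D$ (cf. \eqref{eq:10.2}). So fix $x \in \Fix_D(X)$ and $z \in \Ddw$; we must prove $x + z = x$. By definition of $\Ddw$ there are $d, d' \in D$ with $z + d = d'$. Adding $x$ to this equation gives $x + z + d = x + d'$. Now invoke $x \in \Fix_D(X)$: since $d \in D$ we have $x + d = x$, hence the left side simplifies to $x + z$; and since $d' \in D$ we have $x + d' = x$, so the right side is $x$. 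Therefore $x + z = x$, which is exactly the statement $x + \Ddw = \{x\}$.

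The one point that needs a word of care is that $X$ is only assumed \emph{stable} under $D$ (a subset, not a submodule), so one cannot a priori manipulate elements of $X$ freely; but the whole computation above takes place in the ambient module $V$, and the only property of $x$ used is the defining one, $x + d = x$ for all $d \in D$. Stability of $X$ is what guarantees $\Fix_D(X)$ makes sense as a subset of $X$ (so that the maps $z \mapsto z+d$ do land back in $X$), but it plays no role in the algebraic identity itself. It is also worth noting that this argument does not use antisymmetry of $\leq_D$ or any $D$-ordering hypothesis — it is a pure monoid computation — so the statement is in fact slightly more general than its placement suggests.

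There is essentially no obstacle here; the proposition is a short formal consequence of the description of the subtractive hull. If anything, the "hard part" is purely expository: making clear that the relevant fixed-point property of $x$ automatically propagates from the generating set $D$ to its subtractive hull $\Ddw$, which is the conceptual content one wants the reader to take away (and which foreshadows the role of $\Ddw$ as a universal complementary module, cf. Theorem \ref{thm:9.5}).
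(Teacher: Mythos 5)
Your argument is correct and is essentially the same one the paper gives: take $z \in \Ddw$ with $z + d = d'$ for $d, d' \in D$, then compute $x + z = (x+d) + z = x + d' = x$ using the fixed-point property of $x$ twice. Your closing observations (that stability of $X$ is not used in the computation, and that no $D$-ordering hypothesis is needed) are accurate asides but do not change the proof, which matches the paper's line for line.
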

\begin{proof}
 Let $x \in \Fix_D(X)$ and $a \in \Ddw$. There exist $d_1,d_2 \in D$ with $a+ d_1 = d_2$. Thus
 $x +a = x + d_1 +a = x + d_2 = x$.
\end{proof}
\begin{prop}\label{prop:10.7}
$\Fix_D(V)$ is an ``ideal'' of the additive monoid $(V,+)$, i.e.,
$ V+ \Fix_D(V) \subset \Fix_D(V).$
\end{prop}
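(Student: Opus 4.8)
$\Fix_D(V)$ is an ``ideal'' of the additive monoid $(V,+)$, i.e., $V + \Fix_D(V) \subset \Fix_D(V)$.

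The plan is to take an arbitrary $x \in \Fix_D(V)$ and $v \in V$, and show directly that $v + x \in \Fix_D(V)$, i.e., that $(v+x) + d = v + x$ for every $d \in D$. This is just a short computation: since $x \in \Fix_D(V)$ we have $x + d = x$, and therefore
\[ (v + x) + d = v + (x + d) = v + x, \]
using only associativity and commutativity of $+$ in $V$. Since $v + x \in V$ and it is fixed by every $z \mapsto z+d$ with $d \in D$, by definition $v + x \in \Fix_D(V)$.

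There is no real obstacle here; the only thing worth noting is that $\Fix_D(V)$ is taken inside all of $V$ (the case $X = V$ in Definition~\ref{def:10.5}, which is stable under $D$ since $V + D \subset V$), so the membership condition is precisely ``$x \in V$ and $x + d = x$ for all $d \in D$'', and $v + x$ visibly satisfies both clauses. One may remark in passing that this also shows $\Fix_D(V)$ is closed under the $R$-action up to the caveat already flagged after Definition~\ref{def:10.2} — but that is not needed for the stated ideal property, which concerns only the additive monoid structure.
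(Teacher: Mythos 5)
Your proof is correct and matches the paper's own argument essentially verbatim: both simply observe that for $x\in\Fix_D(V)$, $v\in V$, $d\in D$ one has $(v+x)+d = v+(x+d) = v+x$ by associativity/commutativity. Nothing more to add.
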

\begin{proof}
Let $v \in V$ and $x \in \Fix_D(V)$, then  $(x+v) + d = (x+d) + v = x+v$ for any $d \in D$,
\end{proof}

Using this chain of propositions  we determine the minimal $D$-cosets in $B + \Ddw$, and also get hold on some minimal $\Ddw$-cosets in $A + \Ddw = B + \Ddw$.
\begin{thm}\label{thm:10.8}
  Assume that $D$ is a submodule of an $R$-module $V$, $R$ any semiring, on which $\leq_D$ is an ordering. Let $D \subset A$ be any SA-extension in $V$ and $B = [(A\sm D) + \Ddw] \cup ~D$ the saturation of $A$ by the complementary module $\Ddw$ over $D$.  Recall that $(B, \Ddw) $ has amalgamation in $V$, $B \cap \Ddw = D$, and $A + \Ddw = B + \Ddw$ (Theorem \ref{thm:7.7}).

  The minimal $D$-cosets in $B$ are the singletons $\{ x \}$ with $x \in \Fix_D(A\sm D) $ and, in the case that $D$ has a maximal element, also $x = \dmax$. We have $x + \Ddw = \{x\}$ for all these vectors, and so the singletons $\{ x \}$ are also  minimal $\Ddw$-cosets in $A + \Ddw = B + \Ddw$.

  If $x \in \Fix_D(A) = \Fix_D(B)$, then $x + v \in \Fix_D(B)$ for every $v \in V$. Thus $\Fix_D(B)$ is an upper set under the quasiordering $\leq_V$, restricted to $B$.
\end{thm}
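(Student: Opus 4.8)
The plan is to reduce all three assertions to one computation of the fixed-point set $\Fix_D(B)$, after which the propositions already established in this section finish the job. First I would collect the structural facts about $B$ used throughout: since $D \subset \Ddw$, the proof of Theorem~\ref{thm:7.5} (applied with $T = \Ddw$) gives $B \sm D = (A \sm D) + \Ddw$; moreover $B$ is an $R$-submodule of $V$ containing $D$, hence a set stable under $D$ in the sense of Definition~\ref{def:10.5}; and by Proposition~\ref{prop:7.3}, $(A \sm D) + D = A \sm D$.

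The key step is the identity
\[ \Fix_D(B) = \Fix_D(A \sm D) \,\dot{\cup}\, \Fix_D(D), \]
where $\Fix_D(D) = \{\dmax\}$ if $D$ has a maximal element in the $D$-ordering and $\Fix_D(D) = \emptyset$ otherwise (this is \exampref{exmp:10.4} together with Remark~\ref{rem:10.1}); the union is disjoint since $\Fix_D(A \sm D) \subset A \sm D$ and $\Fix_D(D) \subset D$. The inclusion $\supset$ is immediate. For $\subset$, take $x \in \Fix_D(B)$; if $x \in D$ there is nothing to prove, so assume $x \in B \sm D = (A \sm D) + \Ddw$ and write $x = a + c$ with $a \in A \sm D$ and $c \in \Ddw$. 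Choosing $e, d \in D$ with $c + e = d$ (this is exactly what $c \in \Ddw$ asserts), we get $x = x + e = a + (c + e) = a + d \in (A \sm D) + D = A \sm D$; as $x$ still satisfies $x + d' = x$ for all $d' \in D$, this shows $x \in \Fix_D(A \sm D)$. Granting the identity, Proposition~\ref{prop:10.3} with $U = B$ identifies the minimal $D$-cosets of $B$ with the singletons $\{x\}$ where $x \in \Fix_D(A \sm D)$, or $x = \dmax$ in case $D$ has a maximal element; this is the first assertion.

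For the second assertion, $B$ is stable under $D$, so Proposition~\ref{prop:10.6} gives $x + \Ddw = \{x\}$ for every $x \in \Fix_D(B)$; a singleton is trivially a minimal $\Ddw$-coset of $A + \Ddw = B + \Ddw$ (it cannot properly contain another coset, and $x$ lies in $B \subset A + \Ddw$). For the third assertion, the same computation applied to $A$ in place of $B$ yields $\Fix_D(A) = \Fix_D(A \sm D) \,\dot{\cup}\, \Fix_D(D) = \Fix_D(B)$. Since tautologically $\Fix_D(B) = B \cap \Fix_D(V)$, and $\Fix_D(V)$ is an ideal of $(V,+)$ by Proposition~\ref{prop:10.7}, we get $x + v \in \Fix_D(V)$ for all $x \in \Fix_D(B)$, $v \in V$; intersecting with $B$ shows that whenever $x \in \Fix_D(B)$, $y \in B$ and $x \leqV y$ (say $y = x + v$), also $y \in \Fix_D(B)$, i.e. $\Fix_D(B)$ is an upper set of $(B, \leqV)$ under $\leqV$ restricted to $B$.

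I expect the only genuinely delicate point to be the inclusion $\Fix_D(B) \cap (B \sm D) \subset \Fix_D(A \sm D)$: this is where one must use that adding a suitable element of $D$ absorbs the $\Ddw$-component $c$ back into $D$, together with the SA-extension identity $(A \sm D) + D = A \sm D$. Everything else is bookkeeping with the facts already known about $B$, $\Ddw$, and the fixed-point sets.
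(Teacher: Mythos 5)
Your proof is correct, and it takes a somewhat different organizational route from the paper. The paper's proof argues directly about minimal $D$-cosets in $B + \Ddw$: given $v \in B + \Ddw$ with $v + D$ minimal, it writes $v = x + t$ (with $x \in B$, $t \in \Ddw$), uses $t + d_1 = d_2$ to get $v + d_1 + D = x + d_2 + D \subset x + D$, and then invokes minimality of $v+D$ to land $v + D$ inside $B$, splitting into the cases $v + D \subset A\sm D$ and $v + D \subset \Ddw$. You instead isolate a clean organizing identity $\Fix_D(B) = \Fix_D(A\sm D)\,\dot\cup\,\Fix_D(D)$, and then hand off entirely to Proposition~\ref{prop:10.3} (with $U = B$) to translate fixed points into minimal $D$-cosets. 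The core computation is the same in both — writing an element of $B\sm D$ as $(A\sm D)$-part plus $\Ddw$-part, and absorbing the $\Ddw$-part back into $D$ using $c + e = d$ together with $(A\sm D)+D = A\sm D$ — but your version surfaces a reusable fact ($\Fix_D(A) = \Fix_D(B)$, needed for the last paragraph of the theorem) as a byproduct of the lemma rather than something to be re-derived. For the final assertion you correctly rely on Proposition~\ref{prop:10.7} (the ideal property of $\Fix_D(V)$) together with $\Fix_D(B) = B \cap \Fix_D(V)$; the paper's citation of Proposition~\ref{prop:10.6} for step c) appears to be a slip, so your reference is actually the right one. One remark: the paper's step b) additionally shows that any minimal $\Ddw$-coset of $B + \Ddw$ that contains a minimal $D$-coset must be a singleton — a slightly stronger statement than the theorem text requires; your proposal omits this, which is fine since the theorem only claims that the singletons $\{x\}$ \emph{are} minimal $\Ddw$-cosets, not that they exhaust them.
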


\begin{proof} a) Let $  v \in B + \Ddw$. Assume that $v + D$ is a minimal $D$-coset, and write $v = x + t$ with $x \in B$, $t \in \Ddw$. Then
$$
\begin{array}{ll}
  v + d & \subset v + d_1 + D = x + t + d_1 + D = x+d_2 + D.
\end{array}
$$
Due to the minimality of $v + D$ this forces
$$ v + D = x+d_2 + D \subset B+D.$$
More elaborately $v + D \in (A \sm D) + D = A \sm D$ or $v + D \subset \Ddw$. In the latter case $v + D$ is a minimal $D$-coset in $\Ddw$. This can only happen if $D$ has a maximal element $\dmax$, and then $v + D = \{ \dmax\}$ (cf. Example \ref{exmp:10.4}). In the former case $v + D = \{ x \}$ with $x\in A \sm D$. Thus the minimal $D$-cosets in $A + \Ddw = B + \Ddw$ are the singletons $\{ x\}$ with $x \in \Fix_D(A\sm D) $ and the singleton $\{ \dmax\}$, if $\dmax$ exists.
\pSkip
b)   Assuming now that $v + \Ddw $ is a minimal $\Ddw$-coset in $A + \Ddw = B + \Ddw$, we choose a minimal $D$-coset $u + D$ in the $D$-stable set $A + \Ddw$.  From $u + D \subset v + \Ddw$ we obtain
$u + \Ddw = u + D + \Ddw \subset v + \Ddw$, whence  $u + \Ddw = v + \Ddw$, by  minimality of $v + \Ddw$.  By a) we know that $ u \in \Fix_D(A) = \Fix_D(B)$, and conclude that $u + \Ddw = \{ u\}$ by Proposition \ref{prop:10.6}.
\pSkip
c) The last assertion in Theorem \ref{thm:10.8} is now clear by Proposition \ref{prop:10.6}.
\end{proof}

The question remains, which minimal $\Ddw$-cosets $v + \Ddw$ contain $D$-cosets and how many.  This question can be answered in a very general context. We assume that $V$ is an $R$-module over a semiring $R$, and consider for any submodule $E$ of $V$ the minimal $E$-coset $v+E$ in the set theoretic  sense, not assuming that $V$ is $E$-ordered.

\begin{lem}\label{lem:10.9}
If $v + E$ is minimal, then $v + E = u + E$ for every $u \in v +E$.
\end{lem}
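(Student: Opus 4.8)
The plan is to unwind the definition of minimality of a coset directly; no machinery is needed beyond the fact that $E$ is closed under addition and contains $0$. First I would record the basic observation that set-theoretic inclusion of $E$-cosets is governed by membership: for $x,y\in V$ one has $x+E\subset y+E$ if and only if $x\in y+E$. Indeed, if $x+E\subset y+E$ then $x=x+0\in x+E\subset y+E$; conversely, if $x=y+e$ with $e\in E$, then since $E$ is a submodule we have $e+E\subset E$, and therefore $x+E=y+(e+E)\subset y+E$. In particular the relation $x+E\subset y+E$ is a quasiordering on the set of all $E$-cosets of $V$, and ``$v+E$ is minimal'' means precisely that every $E$-coset contained in $v+E$ already equals $v+E$.

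With this in hand the lemma is immediate. Let $u\in v+E$. By the observation above, $u+E\subset v+E$. Since $v+E$ is a minimal $E$-coset, this forces $u+E=v+E$, as claimed.

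The only point requiring a moment's care — and the only conceivable obstacle — is to note that minimality is being used against the entire family of $E$-cosets of $V$, so that $u+E$ is indeed an admissible competitor, and that the inclusion $e+E\subset E$ for $e\in E$ uses nothing more than $E$ being a submonoid of $(V,+)$. Neither antisymmetry of any ordering nor any amalgamation or SA hypothesis enters the argument, which is consistent with the generality announced in the paragraph preceding the statement.
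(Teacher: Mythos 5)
Your proof is correct and follows essentially the same route as the paper: from $u\in v+E$ deduce $u+E\subset v+E$, then invoke minimality. The paper states the inclusion without comment, whereas you spell out the (elementary) equivalence between coset inclusion and membership; this is sound and adds nothing beyond a fuller justification of the same step.
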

\begin{proof}
 $u + E \subset v + E$. This forces $u + E = v+ E$, since $v + E$ is minimal.
\end{proof}
\begin{prop}\label{prop:10.10}
Let $D$ be any $R$-submodule of $V$. Assume that $v + \Ddw$ is a minimal $\Ddw$-coset in $V$. There exists at most one minimal $D$-coset $u + D \subset v + D$, and then
$u + \Ddw = v + \Ddw$.
\end{prop}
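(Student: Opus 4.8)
The plan is to lean on two structural facts about the subtractive hull $\Ddw$, both available from earlier in the paper: it is an $R$-submodule of $V$ containing $D$, so that $D+\Ddw=\Ddw+\Ddw=\Ddw$ and $0\in\Ddw$; and, by \propref{prop:9.4}, $x\in\Ddw$ holds precisely when $x+d=d'$ for some $d,d'\in D$. The third tool is \lemref{lem:10.9}: a minimal coset $w+E$ (for any submodule $E$) satisfies $w+E=w'+E$ for every $w'\in w+E$.

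First I would settle the ``and then'' clause. Assuming $u+D\subseteq v+\Ddw$ — which in particular covers the stated hypothesis $u+D\subseteq v+D$, since $D\subseteq\Ddw$ — one computes
\[ u+\Ddw \;=\; u+(D+\Ddw) \;=\; (u+D)+\Ddw \;\subseteq\; (v+\Ddw)+\Ddw \;=\; v+\Ddw, \]
and minimality of the $\Ddw$-coset $v+\Ddw$ then forces $u+\Ddw=v+\Ddw$.

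For uniqueness, suppose $u_1+D$ and $u_2+D$ are both minimal $D$-cosets contained in $v+\Ddw$. By the step just done, $u_1+\Ddw=v+\Ddw=u_2+\Ddw$, and since $0\in\Ddw$ this gives $u_1\in u_2+\Ddw$, say $u_1=u_2+a$ with $a\in\Ddw$. Choosing $d_1,d_2\in D$ with $a+d_1=d_2$, we get $u_1+d_1=u_2+a+d_1=u_2+d_2$. Now $u_1+d_1\in u_1+D$ and $u_1+D$ is minimal, so \lemref{lem:10.9} yields $(u_1+d_1)+D=u_1+D$; likewise $(u_2+d_2)+D=u_2+D$. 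As $u_1+d_1=u_2+d_2$, the two sides coincide and $u_1+D=u_2+D$.

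The single delicate point — and what I regard as the actual content — is that $D$-cosets typically overlap without being equal, so the mere fact that $u_1+d_1$ lies in both $u_1+D$ and $u_2+D$ does not by itself give $u_1+D=u_2+D$; the collapse of $(u_i+d_i)+D$ onto $u_i+D$ is licensed only by the minimality of $u_i+D$, through \lemref{lem:10.9}. Everything else is routine bookkeeping with the identities $D+\Ddw=\Ddw+\Ddw=\Ddw$.
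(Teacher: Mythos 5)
Your proof is correct and takes essentially the same route as the paper's: both arguments locate a common element of $u_1+D$ and $u_2+D$ via \propref{prop:9.4}'s description of $\Ddw$, then apply \lemref{lem:10.9} twice to collapse the two cosets onto each other. The only organizational difference is that you prove the ``and then'' clause first and use $u_1+\Ddw=u_2+\Ddw$ to write $u_1=u_2+a$ directly, yielding the cleaner identity $u_1+d_1=u_2+d_2$, whereas the paper relates both $u_1$ and $u_2$ back to $v$ and lands on the slightly bulkier $u_1+d_1+d_2'=u_2+d_1'+d_2$; the mechanism is identical.
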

\begin{proof}
Suppose that $u_1 + D$ and $u_2 + D$ are minimal $D$-cosets contained in $v + \Ddw$. We have vectors $t_1, t_2  \in \Ddw$ for which $u_1 = v + t_1$, $u_2 = v + t_2$, and vectors $d_1, d_1', d_2, d_2'$ in ~ $D$ with  $t_1 + d_1 = d_1'$,  $t_2 + d_2 = d_2'$. Then $u_1 + d_1 = v + d_1'$,
$u_2 + d_2 = v + d_2'$, and so $u_1 + d_1 + d_2' = v + d_1' + d_2' = u_2 + d_1' + d_2$. We conclude by Lemma \ref{lem:10.9} that
$ u_1 + D = u_1 + d_1 + d_2' + D = u_2 + d_1' + d_2 + D = u_2 + D$. Furthermore
$u_1 + \Ddw = v + \Ddw$, again by Lemma \ref{lem:10.9}.
\end{proof}

\section{$D$-isolated vectors}\label{sec:11}
Given a module $V$ over a semiring $R$ and a submodule $D$ of $V$, we call a vector $v\in V$
\textbf{$D$-isolated}, if there exists neither a vector $d \in D$ with $v+d \neq v$ nor  vectors
$ x \neq v$ in $V$ and $d \in D$ such that $x + d =v$.
In other terms, $v + D = \{ v\}$, and $v \notin x + D$ for every $x \in V \sm D$.

We are interested in cases where $D$-isolated vectors show up in connection with a pair $(A,T)$ of submodules of $V$ with amalgamation, where $A \cap T = D$, $A$ an SA-extension of $D$ in $V$, and $T$ a $D$-complement of $A$ in $V$. First a simple but basic example.

\begin{examp}\label{exmp:11.1}
Let $R = \N_{\geq 0}$ and let $V$ be a totaly ordered set with a smallest element $0$.
We introduce the addition
$$ \lm_1 + \lm_2 = \max\{ \lm_1, \lm_2\}, \qquad \lm_1, \lm_2 \in V, $$
on $V$
and regard $V$ as $R$-module in the obvious way.
Assume that $D$ is a subset of $V$ containing~ $0$, and that  $D \sm \{ 0\}$ is convex in $V$.

Using the  notations from \S\ref{sec:9}, the set of vectors in $V$ without $D$-access is
$$ N = \Nac_D(D) = \{ \lm \in V \ds | \lm >D\},$$
and the complement of $N$ in $V$ is
 $$N^c = D^{\downarrow} = \{ x \in V \ds |  \exists d\in D : x \leq d\}. $$
  $A := N \cup  D$ is an SA-extension of $D$,
  since obviously $A \sm D$ is closed under addition and $(A \sm D) + D = A \sm D$ (cf. Proposition \ref{prop:7.3}).
  $T := D^{\downarrow}$ is a $D$-complement of $A$ in $V = A + D^{\downarrow}$ and  $$B:= [(A \sm D) + T] \ds \cup D = A.$$
Thus $A = B$ is a saturated SA-extension of $D$, and so we know by Theorem \ref{thm:7.7} that $(A, D^{\downarrow})$ has amalgamation in $V$.

We enquire, which elements $v$ of $V$ are $D$-isolated. Note that this is only of interest if there exist nonzero elements $\mu < D$.
\begin{enumerate}
   \eroman
\item Assume first $D \sm \00$ is not a singleton.
\begin{enumerate}
   \ealph
  \item If $v < D$, then $v +D = D$, and thus $v$ is not $D$-isolated.

  \item Let $v \in D$. If $D$ has a maximal element $\dmax$, then $v + \dmax \neq v$, and $v + \dmax = \dmax$ for all  $v < \dmax$. 
       If $D$ has no maximum, then $v + D \neq \{ v \} $ for any $v \in D$.
Thus $D$ is void of $D$-isolated vectors.
  \item Let $v > D$. Then $v + D = \{ v\} $, and if $x < v$ then $x + d = \max(x,d) < v$. Thus $v$ is $D$-isolated.
\end{enumerate}
We conclude: The $D$-isolated vectors of $V$ are the vectors $v > D$.
 \item If $D \sm \00 = \{ d \}$, then beside the vectors $v > d$ also  $v =d$ is $D$-isolated.
\end{enumerate}

\end{examp}

We introduce a special class of additive monoids, which will play a central role below.
\begin{defn}\label{def:11.2}
An additive monoid $(X,+)$ is \textbf{bipotent} (also called selective),  if for any $x_1, x_2 \in X$
\begin{equation}\label{eq:11.2}
  x_1 + x_2 \in \{x_1,x_2 \}.
\end{equation}
\end{defn}
For bipotent monoids we define a binary relation $\leq$ by
\begin{equation}\label{eq:11.2}
  x_1 \leq x_2 \dss \Leftrightarrow  x_1+x_2 = x_2.
\end{equation}
Clearly $x \leq x $ for any $x \in X$, and $x_1 \leq x_2$, $x_2 \leq x_1 $ implies $x_1 = x_2$.  If $x_1 \leq x_2 $ and $x_2 \leq x_3$, then
$x_1 + x_3 = x_1 + x_2+x_3 = x_2 + x_3 = x_3$. Furthermore $0 \leq x$ for all $x \in X$. Thus, the relation \eqref{eq:11.2} is a total ordering on the set $X$ with smallest element $0 =0_X$.

We compare this ordering $\leq$ with the quasiordering $\leq_X$ of $(X,+)$. If $x_1 \leq x_2$, then $x_1 + x_2 = x_2$, and so $x_1 \leq_X x_2$. Conversely, if $x_1 \leq_X x_2$, then there exists $y \in X$ such that $x_1 + y = x_2$, whence $x_1 \leq x_2$. Thus $\leq$ coincides with the ordering $\leq_X$ of $(x,+)$. This proves
\begin{prop}\label{prop:11.3} Every bipotent monoid $(X,+) $ is an upper bound monoid with total ordering $\leq_X$
 and smallest element $0 = 0_X$.
\end{prop}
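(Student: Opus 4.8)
The plan is to assemble the facts already collected in the discussion preceding the statement; no new idea is needed. Recall the binary relation $\leq$ on $X$ given by $x_1 \leq x_2$ iff $x_1 + x_2 = x_2$. Taking $x_1 = x_2$ in the bipotency condition shows that every element of $X$ is additively idempotent, so $x \leq x$ and $\leq$ is reflexive; antisymmetry and transitivity were verified directly above; and $\leq$ is a \emph{total} order, because the same bipotency condition, read for two (possibly distinct) elements, gives $x_1 + x_2 \in \{x_1, x_2\}$, i.e. $x_1 \leq x_2$ or $x_2 \leq x_1$. Since $0 + x = x$, we have $0 \leq x$ for every $x \in X$, so $0 = 0_X$ is the least element for $\leq$.

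It then remains only to identify $\leq$ with the quasiordering $\leq_X$, i.e. the relation $\leq_D$ of \secref{sec:10} taken with $D = X$. If $x_1 \leq x_2$, then the equation $x_1 + x_2 = x_2$ already exhibits $x_2$ as $x_1$ plus an element of $X$, so $x_1 \leq_X x_2$. Conversely, if $x_1 \leq_X x_2$, say $x_1 + y = x_2$ with $y \in X$, then additive idempotency of $x_1$ gives $x_1 + x_2 = x_1 + x_1 + y = x_1 + y = x_2$, whence $x_1 \leq x_2$. Thus the relations $\leq$ and $\leq_X$ on $X$ coincide.

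Combining the two steps, $\leq_X$ is a total ordering on $X$; in particular it is antisymmetric, which is precisely the statement that the monoid $X$ is upper bound; and this ordering has smallest element $0_X$. This gives all three assertions of the proposition. There is no genuine obstacle here: the only points needing a moment's care are the passage from bipotency to additive idempotency (on which both reflexivity of $\leq$ and the inclusion $\leq_X\, \subseteq\, \leq$ rest) and the observation that totality of $\leq$ is merely the bipotency axiom restated.
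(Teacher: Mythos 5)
Your proof is correct and follows essentially the same route as the paper: verify that the relation $x_1 \leq x_2 \Leftrightarrow x_1 + x_2 = x_2$ is a total order with least element $0_X$ (reflexivity and totality being immediate consequences of bipotency), then show $\leq$ coincides with the quasiorder $\leq_X$, using additive idempotency for the converse inclusion. You are a bit more explicit than the paper about where additive idempotency and totality come from, but the argument is the same.
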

This fact has a strong converse. Let $(X, \leq)$
be a totally ordered set with smallest element~ $0$. We define a composition
$X \times X \xrightarrow[ ]{+} X$ by the rule
\begin{equation}\label{eq:11.3}
  x_1 + x_2 = \max (x_1, x_2)
\end{equation}
(as done for $(V,+)$ in Example \ref{exmp:11.1}).
The composition $+$ is clearly commutative, and $0 + x = x$ for any $x \in X$. It can be proved by an easy straightforward way that the composition $+$ is associative \{e.g. check that
$(x_1 + x_2) + x_3 = x_1 + (x_2 + x_3)$ by going through the four cases  $x_1 \leq x_2 \leq x_3, x_1 \leq x_2 \geq x_3, \dots $ \}.
Thus $(X,+)$ is an additive monoid, whose zero element is the minimal element of $(X, \leq).$ This monoid is bipotent  due to \eqref{eq:11.3}.

Finally notice that the a map $\vrp: X \to X'$ form $X$ to a second bipotent monoid $(X', +)$ is a monoid homomorphism iff it respects the total orderings $\leq_X$ and $\leq_{X'}$ (in the weak sense, $x_1 \leq x_2 \Rightarrow \vrp(x_1) \leq \vrp(x_2)$), and maps $0_X$ to $0_{X'}$.
Thus we may state

\begin{prop}\label{prop:11.4}
  The category of bipotent monoids is canonically equivalent to the category of totally ordered sets with minimal element.
\end{prop}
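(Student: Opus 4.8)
The plan is to produce two functors going in opposite directions and check that they are mutually inverse, so that the asserted equivalence is in fact an isomorphism of categories. Write $\mathbf{Bip}$ for the category of bipotent monoids with monoid homomorphisms, and $\mathbf{Tot}$ for the category whose objects are totally ordered sets with a (necessarily unique) minimal element and whose morphisms are the weakly monotone maps sending minimal element to minimal element. All the substantive facts are already available: Proposition~\ref{prop:11.3} attaches to a bipotent monoid a total order with least element $0$; the discussion following Proposition~\ref{prop:11.3} attaches to a totally ordered set with least element a bipotent monoid via $x_1+x_2=\max(x_1,x_2)$; and the remark just before Proposition~\ref{prop:11.4} identifies the two notions of morphism.

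First I would define $F\colon\mathbf{Bip}\to\mathbf{Tot}$ by $F(X,+)=(X,\leqX)$ on objects and $F(\vrp)=\vrp$ on morphisms. This is well defined on objects by Proposition~\ref{prop:11.3}, and on morphisms because, by the remark preceding Proposition~\ref{prop:11.4}, a monoid homomorphism between bipotent monoids is weakly monotone and preserves $0$. Dually, define $G\colon\mathbf{Tot}\to\mathbf{Bip}$ by $G(X,\leq)=(X,+)$ with $x_1+x_2=\max(x_1,x_2)$ on objects, and $G(\psi)=\psi$ on morphisms; this is well defined on objects by the construction recalled above, and on morphisms by the same remark read in the reverse direction. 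Since both $F$ and $G$ leave underlying sets and underlying maps untouched, they preserve identities and composition automatically, hence are functors.

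Next I would check that $G\circ F=\Id_{\mathbf{Bip}}$ and $F\circ G=\Id_{\mathbf{Tot}}$; on morphisms both composites are literally the identity, so only the object level requires attention. For $G\circ F$: given $(X,+)\in\mathbf{Bip}$, the monoid $G(F(X,+))$ has underlying set $X$ with addition $(x_1,x_2)\mapsto\max(x_1,x_2)$ computed in $\leqX$; by bipotency $x_1+x_2$ equals $x_1$ or $x_2$, and in either case that element is the $\leqX$-larger of the two, so the addition is again $+$. For $F\circ G$: given $(X,\leq)\in\mathbf{Tot}$, the order $F$ reads off $G(X,\leq)$ is $x_1\leq'x_2\Leftrightarrow x_1+x_2=x_2\Leftrightarrow\max(x_1,x_2)=x_2\Leftrightarrow x_1\leq x_2$, and its least element, being the neutral element of $\max$, is the $\leq$-least element. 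Hence $F$ and $G$ are mutually inverse, so the categories are isomorphic, in particular equivalent.

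The argument is essentially bookkeeping, so there is no serious obstacle; the single point worth stating explicitly is the choice of morphisms in $\mathbf{Tot}$. They must be taken to be the weakly monotone maps that preserve the minimal element, precisely so that they correspond to homomorphisms of bipotent monoids: such a homomorphism must send $0$ to $0$, but, as a constant map to a non-least element shows, minimum-preservation is not a consequence of monotonicity and has to be imposed. With that convention the passage between the two kinds of morphism is a bijection, and the proof is complete.
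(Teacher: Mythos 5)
Your proof is correct and follows the same route the paper takes (the paper essentially regards Propositions~\ref{prop:11.3}, the $\max$-construction, and the morphism remark as constituting the proof, which it leaves implicit). You have merely packaged those ingredients into explicitly named functors $F$, $G$ and verified $G\circ F=\Id$, $F\circ G=\Id$ on objects, which is exactly what the paper intends; your note that the morphisms in $\mathbf{Tot}$ must be required to preserve the minimum is a worthwhile clarification, since it is the monoid condition $\vrp(0_X)=0_{X'}$, not monotonicity, that forces it.
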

In what follows we view bipotent monoids as the same objects as totally ordered sets with minimal element.

\begin{defn}\label{def:11.5}
$ $
\begin{enumerate} \ealph
  \item
A \textbf{bipotent retraction} of an additive monoid $(V,+) $ is a monoid homomorphism
$\vrp: (V,+) \to (X,+)$ to a bipotent submonoid $X$ of $V$ with $\vrp(x) = x$ for any $x\in X$.\footnote{We do not demand here that $(V,+)$ is upper bound, but in the construction below this will be the case.}

\item
We call such a retraction
 \textbf{special}, if for $v_1, v_2 \in V$ the following holds:
 $v_1 + v_2 \in \{ v_1, v_2 \} $ if $\vrp(v_1) \neq \vrp(v_2)$, while
 $v_1 + v_2 = \vrp(v_1) $ if $\vrp(v_1) =  \vrp(v_2)$.
\end{enumerate}

\end{defn}

Below we will use special retractions of $V$ to exhibit $D$-isolated vectors for  suitable submonoids $D$ of $V$.

\begin{rem}\label{rem:11.6} If $\vrp:(V,+) \to (X,+)$ is a special bipotent retraction, then for any $v_1, v_2 \in V$
\begin{equation}\label{eq:11.4}
  v_1 + v_2 = \left \{
  \begin{array}{ll}
   v_2 &  \text{if } \vrp(v_1) < \vrp(v_2), \\[1mm]
   v_1 &  \text{if } \vrp(v_1) > \vrp(v_2) ,\\[1mm]
   \vrp(v_1) &  \text{if } \vrp(v_1) =  \vrp(v_2), \\
  \end{array} \right.
\end{equation}
as follows from the fact that $\vrp$ respects addition, and so respects the quaiordering $\leq_V$ on $V$ and is restriction to the ordering $\leq_X$ on $X$.

It will turn out that for $\vrp:(V,+) \to (X,+)$  a special bipotent retraction the monoid $(V,+) $ is upper bound, but bipotent retractions of $(V,+)$ in general retains sense if $V$ is not upper bound and will  be useful also then, cf. Theorem \ref{thm:11.9} below.
\end{rem}

We obtain all special bipotent retractions by the following construction. Let $V$ be a \textbf{set} and $\vrp: V \to X$ a map to a subset $X$ of $V$ with $\vrp(x) =x $ for every $x \in X$ (a ``set theoretic retraction''). We choose a total ordering on $X$ with a minimal element $0 = 0_X$, and view ~$X$ as a bipotent monoid $(X,+)$, as explained above. Then we define a composition by the rule ~\eqref{eq:11.4} above.
\begin{thm}\label{thm:11.7} $ $
\begin{enumerate} \eroman
  \item $(V,+)$ is an upper bound additive monoid and $\vrp: V \to X$ is a special bipotent retraction.
  \item Every fiber $\ivrp(x)$ of $x$ is convex with respect to the partial ordering $\leq_V$, in particular $\ivrp(0) = \00$.
  \item Let $v \in V$, $x \in X$. If $\vrp(v ) \leq \vrp(x) = x$, then $v + x = x$.
If $\vrp(v ) > \vrp(x)$, then $v + x = v$.
\item $v + v = \vrp(v)$ for  every $v \in V$. Thus the bipotent submonoid $X$ of $(V,+)$ is uniquely determined by $(V,+)$. If $v_1 + v_1 = v_2 + v_2$, then  $v_1 + v_1 = v_1 + v_2$.
\end{enumerate}
\end{thm}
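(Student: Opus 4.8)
The plan is to derive all four parts directly from the defining rule~\eqref{eq:11.4}, since only one step --- associativity --- involves real computation. The first thing I would record is the \emph{additivity of $\vrp$}: for all $v_1,v_2\in V$ one has $\vrp(v_1+v_2)=\max\{\vrp(v_1),\vrp(v_2)\}$, equivalently $\vrp(v_1+v_2)=\vrp(v_1)+\vrp(v_2)$ with the sum formed in the bipotent monoid $X$. This is read off~\eqref{eq:11.4} in three lines: in the two strict cases $v_1+v_2$ is whichever of $v_1,v_2$ carries the larger $\vrp$-value, and in the equality case $v_1+v_2=\vrp(v_1)\in X$, which $\vrp$ fixes. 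An immediate consequence, used repeatedly below, is that $a\leq_V b$ forces $\vrp(a)\leq_X\vrp(b)$.

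For part~(i): commutativity is clear, because swapping $v_1$ and $v_2$ permutes the three clauses of~\eqref{eq:11.4}. For associativity I would show that both $(v_1+v_2)+v_3$ and $v_1+(v_2+v_3)$ equal a bracket-independent normal form $s(v_1,v_2,v_3)$, defined to be $v_k$ when the maximum of $\vrp(v_1),\vrp(v_2),\vrp(v_3)$ is attained only at the index $k$, and to be $\max\{\vrp(v_1),\vrp(v_2),\vrp(v_3)\}\in X$ when it is attained more than once; by commutativity it suffices to check the orderings $\vrp(v_1)>\vrp(v_2)>\vrp(v_3)$, $\vrp(v_1)=\vrp(v_2)>\vrp(v_3)$, $\vrp(v_1)>\vrp(v_2)=\vrp(v_3)$ and $\vrp(v_1)=\vrp(v_2)=\vrp(v_3)$, in each of which two applications of~\eqref{eq:11.4} (using $\vrp(\vrp(v_i))=\vrp(v_i)$ for the mixed terms) produce $s$ on both sides. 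That $0=0_X$ is a two-sided identity follows from~\eqref{eq:11.4}: for $\vrp(v)>0_X$ one gets $v+0=v$ directly, while for $\vrp(v)=0_X$ one gets $v+0=\vrp(v)=0_X$, which equals $v$ because $\ivrp(0_X)=\00$ --- this last fact being exactly the $\ivrp(0)=\00$ clause of~(ii), forced by $0_X$ being the least element of $X$ together with $\vrp$ fixing $X$ pointwise. Antisymmetry of $\leq_V$ I would get as follows: if $a\leq_V b$ and $b\leq_V a$, additivity gives $\vrp(a)=\vrp(b)=:x$; writing $a+c=b$ and using additivity again gives $\vrp(c)\leq_X x$, so~\eqref{eq:11.4} yields either $b=a$ or $b=\vrp(a)=x\in X$, and symmetrically $a=b$ or $a=x$; every combination collapses to $a=b$. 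Thus $(V,+)$ is an upper bound monoid. Finally $\vrp$ is a \emph{special} bipotent retraction: it fixes the bipotent submonoid $X$ pointwise by hypothesis, it is a monoid homomorphism by additivity, and the two defining conditions in Definition~\ref{def:11.5}(b) hold verbatim by~\eqref{eq:11.4}.

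For parts~(ii)--(iv): in~(ii) the assertion $\ivrp(0)=\00$ was noted above, and convexity of a general fiber is immediate --- if $a\leq_V c\leq_V b$ with $\vrp(a)=\vrp(b)$ then $\vrp(a)\leq_X\vrp(c)\leq_X\vrp(b)=\vrp(a)$, so $\vrp(c)=\vrp(a)$. Part~(iii) is a direct three-case reading of~\eqref{eq:11.4} using only $\vrp(x)=x$. For~(iv), the equality case of~\eqref{eq:11.4} with $v_1=v_2=v$ gives $v+v=\vrp(v)$; hence $X=\{\,w\in V:w+w=w\,\}=\{\,v+v:v\in V\,\}$ depends only on the monoid $(V,+)$, its ordering is the restriction of $\leq_V$, and $\vrp$ is recovered as $v\mapsto v+v$; and if $v_1+v_1=v_2+v_2$ then $\vrp(v_1)=\vrp(v_2)$, so the equality case of~\eqref{eq:11.4} gives $v_1+v_2=\vrp(v_1)=v_1+v_1$.

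The only step with genuine content is the associativity verification; once the additivity of $\vrp$ and the normal form $s(v_1,v_2,v_3)$ are isolated it is purely mechanical, so the real work is just organizing the handful of cases. The one point where anything beyond~\eqref{eq:11.4} enters is the identity axiom, which rests on $\ivrp(0_X)=\00$.
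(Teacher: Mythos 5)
Your proposal is correct and follows the same route as the paper: verify commutativity, identity, and associativity directly from rule~\eqref{eq:11.4}, establish the upper bound property, and read off (ii)--(iv). The paper proves the upper bound property in the equivalent cancellation form ($v+w_1+w_2=v\Rightarrow v+w_1=v$) whereas you argue antisymmetry of $\leq_V$ head-on, and for associativity the paper simply runs through the cases $\vrp(v_1)\,\square\,\vrp(v_2)\,\square\,\vrp(v_3)$ with $\square\in\{<,=,>\}$, using the $v_1\leftrightarrow v_3$ symmetry, whereas you introduce a symmetric normal form $s(v_1,v_2,v_3)$; these are cosmetic reorganizations of the same computation. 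Your explicit isolation of the additivity $\vrp(v_1+v_2)=\vrp(v_1)+\vrp(v_2)$ as a preliminary fact is a genuine clarifying step that the paper uses only implicitly.

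Two small points of care. First, the reason you give for $\ivrp(0_X)=\00$ --- that it is ``forced by $0_X$ being the least element of $X$ together with $\vrp$ fixing $X$ pointwise'' --- is not a valid deduction: nothing in the stated data prevents $\vrp$ from sending some $v\neq 0_X$ to $0_X$. What is actually true is that $\ivrp(0_X)=\{0_X\}$ is a tacit requirement of the construction (if $\vrp(v)=0_X$ for $v\neq 0_X$, rule~\eqref{eq:11.4} gives $v+0_X=0_X\neq v$, so there is no identity); the paper glosses over this in the same way, and your observation that the identity axiom hinges precisely on this point is on target even if the supporting reason is not. Second, the four orderings you list for the associativity check are not exhaustive even after quotienting by the available symmetry of the identity $(v_1+v_2)+v_3=v_1+(v_2+v_3)$ (which is $v_1\leftrightarrow v_3$, not the full symmetric group); a complete treatment needs additional representatives such as $\vrp(v_1)>\vrp(v_3)>\vrp(v_2)$ and $\vrp(v_1)=\vrp(v_3)\neq\vrp(v_2)$. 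The method is fine; the bookkeeping just needs to be completed.
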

\begin{proof} a) It is obvious from rule \eqref{eq:11.4} that the composition $V \times V  \xrightarrow[ ]{+}  V$ is commutative and $v + 0 = v$ for all $v \in V$, where $0 = 0_X \in X$. For any $v_1, v_2, v_3 \in V$ it can be verified in a straightforward way that
  $(v_1 +  v_2) +  v_3 = v_1 + ( v_2 +  v_3)$ \{e.g. run  through all cases $\vrp(v_1) \ds \square \vrp(v_2) \ds \square \vrp(v_3)$ with $\square \in \{ < ,=, > \} $; some  cases can be settled simultaneously by interchanging $v_1$ and $v_3$\}. Thus
  $(V,+)$ is an additive monoid. Furthermore $(X,+)$ is a submonoid of $(V,+)$ by \eqref{eq:11.4}, and so $\vrp: V \to X$ is a special bipotent retraction of $V$.

  \pSkip
    b) We verify that $(V,+) $ is upper bound. Let $v, w_1,w_2 \in V$ and
    \begin{equation}\label{eq:str}
      v+ w_1 + w_2 = v. \tag{$*$}
    \end{equation}
    We need to prove that $v + w_1 = v$. Applying $\vrp $ to  $\eqref{eq:str}$ gives
    $$  \vrp(v)+ \vrp(w_1) + \vrp(w_2) = \vrp(v). $$
    Since $X$ is bipotent, we conclude that  $\vrp(w_1) \leq  \vrp(v)$
and $\vrp(w_2) \leq  \vrp(v)$. If $\vrp(w_1) <  \vrp(v)$, then $v + w_1 = v$.
If $\vrp(w_2) <  \vrp(v)$, then $v + w_2 = v$, and then by \eqref{eq:str}
again $v + w_1 =v$. If finally
 $\vrp(w_1)  = \vrp(w_2) = \vrp(v)$, then by \eqref{eq:str}  $v + w_1 = \vrp(v)$,
$v + w_1 +w_2 = \vrp(v)$, and we read off from  \eqref{eq:str} that $\vrp(v) = v$, $v + w_1 = v.$ Thus $v+ w_1 = v$ in all cases.

\pSkip c)
Claims (iii) and (iv) of Theorem \ref{thm:11.7} now follow by easy observations.
(iii) is clear by~ \eqref{eq:11.4}.
 If $v_1 \leq v \leq v_2$, then $\vrp(v_1) \leq \vrp(v) \leq \vrp(v_2)$, and so, if $\vrp(v_1) = \vrp(v_2)$ also $\vrp(v_1) = \vrp(v)$. Thus $\ivrp(\lm )$ is convex for any $\lm \in X$. (We use here in an essential way that~ $V$ is upper bound.) $\ivrp(0) = \00$ is obvious directly from \eqref{eq:11.4}. By \eqref{eq:11.4} also $\vrp(v) = v +v $. Finally, if $v_1 + v_1 =v_2 + v_2 $, then $\vrp(v_1)  = \vrp(v_2)$, and so $v_1 + v_1 = v_1 + v_2$, again by \eqref{eq:11.4}.
\end{proof}

\begin{examp}\label{exmp:11.7} Let $R = (R,\tG, \nu)$ be a supertropical semiring \cite{IzhakianRowen2007SuperTropical}, or more generally a $\nu$-semiring \cite{nualg}, where  $\tG$ is a bipotent subsemiring of $R$ and $\nu$ is a projection $R \to \tG$, i.e., $\nu(a) = a $ for every $a \in \tG$, satisfying $a + b = \nu(a)$ if $\nu(a) = \nu(b)$.   Then the projection  $\nu: R \to \tG$ is a bipotent retraction (Definition~ \ref{def:11.5}).

When $R$ is a supertrpical semifield \cite{zur05TropicalAlgebra,IzhakianRowen2007SuperTropical}, i.e., $\tT := R \sm \tG$ is an abelian group and the restriction $\nu|_{\tT}: \tT \to \tG$ is onto,  the subsemiring~ $\tG$ is totally ordered and $ a + b \in \{ a,b\}$ whenever $\nu(a) \neq \nu(b)$. So for  this case, the projection  $\nu: R \to \tG$ is a special bipotent retraction.

The familiar tropical (max-plus)  semifield $\T = (\R \cup \{ -\infty \}, \max, +)$ is a biopotent semifield. It extends to a supertropical semifield $F = (\R \cup \{-\infty\} \cup \R^\nu, +, \cdot \,)$, where $\R^\nu$ is a second copy of $\R$, in which $\T$ embeds in $F$ as its  ghost ideal $\tG$. Then the projection $\nu: F \to \T$ is  a special bipotent retraction.
\end{examp}

We now  assume only that $\vrp:(V,+) \to (X,+)$ is a bipotent retraction, $V$ not necessarily upper bound. We choose a  subset $\brD$ of $X$ that contains $0$, for which $\brD \sm \00$ is convex in $X$. Then
\begin{equation}\label{eq:11.5}
  D : = \ivrp(\brD)
\end{equation}
is a submonoid  if $V$, for which the following holds:
\begin{equation}\label{eq:11.6}
  \text{If $d_1 \leq_V v \leq_V d_2$ where  $\vrp(d_1)$ and $\vrp(d_2)$ are in $\brD \sm \00$, then $v \in D$. }
\end{equation}
Let $$\Ddw := \{ v \in V \ds | v \leq_V d \text{ for all } d \in D\}.$$

\begin{lem}\label{lem:11.9} $\Ddw  = \ivrp(\brD^{\downarrow})$.
\end{lem}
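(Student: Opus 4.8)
The plan is to describe both sides of the asserted equality explicitly and then transport membership across the monoid homomorphism $\vrp$. First I would recall that $\Ddw$ is the subtractive hull of $D$ in $V$ (cf.\ the description accompanying \eqref{eq:10.2}), that is
$\Ddw = \{v \in V : v + d = d' \text{ for some } d, d' \in D\} = \{v \in V : (v + D) \cap D \neq \emptyset\}$. On the other side, $X$ is bipotent, hence totally ordered by $\leq_X$ with least element $0$ (Proposition~\ref{prop:11.3}), so $\brD^{\downarrow}$ is the down-set $\{x \in X : x \leq_X \bar d \text{ for some } \bar d \in \brD\}$; here I would use freely that for $x, y \in X$ one has $x \leq_X y$ iff $x + y = y$. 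I would also note at the outset that $\vrp(D) \subseteq \brD$, since $D = \ivrp(\brD)$, and $\brD \subseteq D$, since $\vrp$ fixes $X$ pointwise. It then suffices to verify the two inclusions.

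For $\Ddw \subseteq \ivrp(\brD^{\downarrow})$ I would take $v \in \Ddw$ and write $v + d = d'$ with $d, d' \in D$. Applying $\vrp$ gives $\vrp(v) + \vrp(d) = \vrp(d')$ in $X$, with $\vrp(d), \vrp(d') \in \brD$. Since $\vrp(v) \leq_X \vrp(v) + \vrp(d) = \vrp(d')$ in any commutative monoid, and $\vrp(d') \in \brD$, we conclude $\vrp(v) \in \brD^{\downarrow}$, i.e.\ $v \in \ivrp(\brD^{\downarrow})$.

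For $\ivrp(\brD^{\downarrow}) \subseteq \Ddw$ I would take $v \in V$ with $\vrp(v) \in \brD^{\downarrow}$, so $\vrp(v) \leq_X \bar d$ for some $\bar d \in \brD$; then $\bar d \in \brD \subseteq D$. Applying $\vrp$ to $v + \bar d$ and using $\vrp(v) + \bar d = \bar d$ (bipotence of $X$ together with $\vrp(v) \leq_X \bar d$) gives $\vrp(v + \bar d) = \bar d \in \brD$, hence $v + \bar d \in \ivrp(\brD) = D$. Thus $v + \bar d = d'$ with $\bar d, d' \in D$, so $(v + D) \cap D \neq \emptyset$ and $v \in \Ddw$.

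This is a short diagram chase and I do not anticipate a genuine obstacle. The one point that must not be mishandled is that here $\vrp$ is merely a monoid homomorphism --- $V$ is not assumed upper bound and $\vrp$ is not assumed to be a \emph{special} bipotent retraction --- so in the second inclusion one may conclude only $\vrp(v + \bar d) = \bar d$, not $v + \bar d = \bar d$; fortunately that weaker conclusion is exactly what places $v + \bar d$ inside $D = \ivrp(\brD)$. I would also remark that neither the convexity of $\brD \sm \00$ nor property \eqref{eq:11.6} enters this lemma, and that the argument is unaffected if $\Ddw$ is read instead as $\{v \in V : v \leq_V d \text{ for some } d \in D\}$, since under the present hypotheses that set too coincides with $\ivrp(\brD^{\downarrow})$.
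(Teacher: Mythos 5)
Your proof is correct and follows essentially the same route as the paper: apply the monoid homomorphism $\vrp$ to the defining equation for membership in $\Ddw$ to get the forward inclusion, and for the reverse inclusion pick the witness $\bar d \in \brD \subseteq D$ and use bipotence of $X$ together with $\vrp(x) = x$ on $X$. The paper's own proof is terser and leaves the witness $w$ implicit (it just asserts ``there exists $w \in V$ with $\vrp(v+w) = \vrp(v) + \vrp(w) = \brd$''); you make the choice $w = \bar d$ explicit, which is a genuine clarification. Your closing remark also resolves a real ambiguity: the displayed definition of $\Ddw$ immediately preceding Lemma~\ref{lem:11.9} uses $\leq_V$ (and has a ``for all''/``for some'' slip), whereas \eqref{eq:10.2} uses $\leq_D$; as you observe, the argument establishes the equality under either reading, so the two sets agree here.
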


\begin{proof} $(\subseteq)$: Let $v \in \Ddw$, i.e., $v+w = d$ with $w \in V$, $d \in D$. Then $\vrp(v) + \vrp(w) = \brd$ with $\brd := \vrp(d) \in \brD$. Thus $\vrp(v) \in \brD^{\downarrow}$.

 \pSkip  $(\supseteq)$: Let $v \in V$, $\vrp(v) \leq \brd$. There exists $w \in V$ with
 $\vrp(v+w) = \vrp(v) + \vrp(w) = \brd$, and so $v+w \in \ivrp(\brD) = D$.
\end{proof}

\begin{thm}\label{thm:11.9}
 Assume that $\lm \in X$ is $\brD$-isolated. Then every $v \in \ivrp(\lm)$ is $D$-isolated.
\end{thm}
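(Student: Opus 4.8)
The plan is to fix an arbitrary vector $v\in\ivrp(\lambda)$ and to verify the two clauses in the definition of a $D$-isolated vector: (a) $v+d=v$ for every $d\in D$; and (b) if $x+d=v$ with $x\in V$ and $d\in D$, then $x=v$.

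First I would translate the hypothesis. Since $X$ is bipotent, its addition is the maximum for the total ordering $\leqX$, so ``$\lambda$ is $\brD$-isolated'' says exactly that $\brd\leqX\lambda$ for every $\brd\in\brD$ and that the equation $\xi+\brd=\lambda$ has no solution with $\xi\in X\sm\{\lambda\}$ and $\brd\in\brD$. Setting aside the degenerate case $\brD=\00$, $\lambda=0_X$ (where $D$ shrinks to $\00$ and the claim is trivial), the first fact forces $\lambda\notin\brD$, so that $\ivrp(\lambda)\cap D=\ivrp(\lambda)\cap\ivrp(\brD)=\emptyset$, while the combination of both facts forces $\lambda\notin\brD^{\downarrow}$, which by \lemref{lem:11.9} places the whole fibre $\ivrp(\lambda)$ outside $\Ddw$. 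In particular $v\notin D$, and every $d\in D$ has $\vrp(d)\in\brD$ with $\vrp(d)\leX\lambda$.

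For clause (a), fix $d\in D$. Applying $\vrp$ to $v+d$ gives $\vrp(v+d)=\vrp(v)+\vrp(d)=\lambda+\vrp(d)=\lambda$, the last equality because $\vrp(d)\leX\lambda$ in the bipotent monoid $X$; hence $v+d$ lies in the fibre $\ivrp(\lambda)$ together with $v$. To upgrade this to the equality $v+d=v$ I would invoke the way a bipotent retraction controls addition: since $\vrp(d)\leX\vrp(v)$, the summand of larger $\vrp$-value absorbs the other, i.e.\ $v+d=v$ (cf.\ \eqref{eq:11.4}; more generally this rests on \thmref{thm:11.7} and the resulting convexity of the fibres of $\vrp$). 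This is the step that carries the real weight of the proof.

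For clause (b), suppose $x+d=v$ with $x\in V$ and $d\in D$. Applying $\vrp$ yields $\vrp(x)+\vrp(d)=\lambda$ with $\vrp(d)\in\brD$; if $\vrp(x)\neq\lambda$, this would solve $\xi+\brd=\lambda$ with $\xi=\vrp(x)\in X\sm\{\lambda\}$ and $\brd=\vrp(d)\in\brD$, contradicting that $\lambda$ is $\brD$-isolated. Hence $\vrp(x)=\lambda$, so $x\in\ivrp(\lambda)$, and clause (a) applied to $x$ in place of $v$ gives $x+d=x$; comparing with $x+d=v$ we conclude $x=v$. The one place where the argument is more than routine bookkeeping is the single step inside (a) --- turning ``$v+d$ and $v$ have the same image under $\vrp$'' into the equality $v+d=v$ --- and it is precisely there that the hypothesis that $\vrp$ is a bipotent retraction (and not merely a surjection of $(V,+)$ onto a bipotent monoid) must be used in an essential way.
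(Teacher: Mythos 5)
Your proof correctly isolates the load-bearing step --- passing from $\vrp(v+d)=\vrp(v)$ to the stronger equality $v+d=v$ in $V$ --- and you are right that this is where the real weight lies. The trouble is that your justification, \eqref{eq:11.4}, is available only for a \emph{special} bipotent retraction (Definition~\ref{def:11.5}(b), \thmref{thm:11.7}), whereas the paper has explicitly dropped that assumption just before \lemref{lem:11.9}: ``We now assume only that $\vrp:(V,+)\to(X,+)$ is a bipotent retraction, $V$ not necessarily upper bound.'' For a general bipotent retraction there is no mechanism forcing equality of $\vrp$-images to propagate back to equality in $V$.

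What makes this more than a quibble is that the paper's own proof is silent at exactly the spot you identified. It reads ``Thus $\vrp(u)=\lm$, and so $u+d=u$, as proved,'' but nothing earlier proves $\vrp(u)=\lm\Rightarrow u+d=u$; the opening sentence of that proof only yields $\vrp(v+d)=\lm$, which is strictly weaker. In fact, without specialness the statement fails: take $X=\{0,1,2\}$ with max-addition, $V=X\times\N_0$ with coordinatewise addition, $\vrp(x,n)=(x,0)$, $\brD=\{0,1\}$ and $\lm=2$. Then $\vrp$ is a bipotent retraction that is not special, $\lm$ is $\brD$-isolated, but $v=(2,0)\in\ivrp(\lm)$ is not $D$-isolated, since $d=(1,1)\in D=\ivrp(\brD)$ gives $v+d=(2,1)\ne v$. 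So your instinct to invoke \eqref{eq:11.4} is sound --- the result really does need $\vrp$ to be special --- but that is an additional hypothesis the surrounding text has discarded, and a correct write-up must restate it explicitly rather than smuggle it in under the generic heading ``bipotent retraction.''
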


\begin{proof}
Let $\vrp(v) = \lm$. Then for any $d \in D$ $\vrp(v+d) = \vrp(v) + \vrp(d) = \vrp(v) = \lm$. If $u + d = v$, then $\vrp(u) + \vrp(d) = \vrp(v) = \lm$.
Thus $\vrp(u) = \lm$, and so  $u + d = u$, as proved. We conclude that $u= v$.
\end{proof}

Thus, what had been observed about $D$-isolated vectors in Example \ref{exmp:11.1}, remains valid in the present more general setting mutatis mutandis.

\section{SA-submodules induced by actions on upper bound   modules }\label{sec:12}

Let $(V,+) $ and $(X,+)$ be additive monoids. An \textbf{action} of $V$ on $X$ is a map
$\al: V \times X \to X$ having the following properties:  Write
$\al(u,x) = u \pal x$
$$ u \pal (x_1 + x_2 ) \ds = (u \pal x_1) + x_2 \quad \text{(and so also  } = x_1 + (u \pal  x_2 )) ,  $$
$$(u_1 + u_2) \pal x = u_1 \pal (u_2 \pal x), \qquad 0  \pal x = x. $$
In this situation,  for any $x \in S$ define
\begin{equation}\label{eq:12.1}
  \cal(s) := \{ u \in V \ds| u \pal s = s\}.
\end{equation}
It is immediate that $\cal(s)$ is a submonoid of $V$: If $u_1 \pal s = s$ and $u_2 \pal s = s$, then
$$ (u_1 + u_2) \pal s = u_1 \pal (  u_2  \pal s )  = u_1  \pal s = s, \quad \text{and } 0 \pal s = s.$$
We define for $u \in V$:
$$ \tlu := u \pal 0_X \in X. $$
Note that for $u_1 + u_2 \in V$ we have
$$ \begin{array}{ll}
\widetilde{u_1 + u_2} & = (u_1 + u_2) \pal 0 =  u_1 \pal (u_2 \pal 0) \\[2mm] & = u_1 \pal \tlu_2  = u_1 \pal (0 + \tlu_2) = (u_1 \pal  0) + \tlu_2 = \tlu_1 + \tlu_2.   \end{array}
$$
Furthermore, $\tl0 = 0 \pal 0_X = 0_X$.
Thus the map  $u \mapsto \tlu$ is a homomorphism of the monoid $V$ onto a submonoid of $X$.

From now on \emph{we assume that the monoid $(X,+)$ is upper bound}.
\begin{rem}\label{rem:12.1}
  For any $u \in V$, $x \in X$,
  $$ u \pal x = u \pal (0 + x) = (u \pal 0) + x = \tlu + x, $$
and so $u \pal x \geq_X x.$
\end{rem}
\begin{prop}\label{prop:12.2}   $\cal(s)$ is an SA-submonoid of $V$ for any $s \in X$.

\end{prop}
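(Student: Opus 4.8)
The plan is to exploit the composition law $(u_1+u_2)\pal s = u_1\pal(u_2\pal s)$ together with the monotonicity recorded in Remark~\ref{rem:12.1}, namely $u\pal x \geq_X x$ for all $u\in V$ and $x\in X$, and the fact that $\leq_X$ is antisymmetric because $(X,+)$ is upper bound. Since it has already been observed, just before the statement, that $\cal(s)$ is a submonoid of $V$, all that remains is to verify the summand-absorbing property.

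First I would take $u_1,u_2\in V$ with $u_1+u_2\in\cal(s)$, i.e.\ $(u_1+u_2)\pal s = s$, and rewrite the left-hand side via the composition law as $u_1\pal(u_2\pal s)$. Applying Remark~\ref{rem:12.1} twice — once to the inner product $u_2\pal s$ and once to the outer one — gives
\[ s \;=\; u_1\pal(u_2\pal s)\;\geq_X\; u_2\pal s \;\geq_X\; s . \]
Hence $u_2\pal s$ is squeezed between $s$ and $s$ in the relation $\leq_X$, and antisymmetry of $\leq_X$ forces $u_2\pal s = s$, that is, $u_2\in\cal(s)$.

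Knowing $u_2\pal s = s$, I would then substitute back into the composition law: $u_1\pal s = u_1\pal(u_2\pal s) = (u_1+u_2)\pal s = s$, so $u_1\in\cal(s)$ as well. This verifies the implication $u_1+u_2\in\cal(s)\Rightarrow u_1\in\cal(s),\,u_2\in\cal(s)$, and together with the submonoid property it shows that $\cal(s)$ is SA in $V$.

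I do not anticipate any genuine obstacle; the only points calling for a little care are to apply Remark~\ref{rem:12.1} in the correct order (to $u_2\pal s$ first, then to $u_1\pal(u_2\pal s)$), and to make explicit that the antisymmetry invoked is precisely the hypothesis that $(X,+)$ is upper bound — without it the argument would only give $u_2\pal s$ and $s$ equivalent under $\leq_X$, not equal.
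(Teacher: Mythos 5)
Your argument is essentially identical to the paper's proof: the same chain $s = (u_1+u_2)\pal s = u_1\pal(u_2\pal s)\geq_X u_2\pal s \geq_X s$ obtained from Remark~\ref{rem:12.1}, followed by antisymmetry of $\leq_X$ to conclude $u_2\pal s = s$, with the conclusion for $u_1$ then immediate. The only difference is that you spell out the final step $u_1\pal s = u_1\pal(u_2\pal s) = s$ which the paper leaves as a parenthetical remark.
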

\begin{proof}
  Let $u_1 + u_2 \in \cal(s)$. Then by Remark \ref{rem:12.1},
  $$\begin{array}{ll}
       s = (u_1 + u_2) \pal s = u_1 \pal (u_2 \pal s)  \geq_X u_2 \pal s \geq_X s,
    \end{array}$$
  and thus $u_2 \pal s = s $ (and so also $u_1 \pal s = s$).
\end{proof}

\begin{prop}\label{prop:12.3} If $s_1, s_2 \in X$, then
$\cal(s_1) + \cal(s_2) \subset \cal(s_1 + s_2)$. Thus
$$ s \leq_X t \dss \Rightarrow \cal(s) \subset \cal(t).$$
\end{prop}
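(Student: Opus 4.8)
The plan is to verify the two stated inclusions directly from the axioms of an action and the defining property \eqref{eq:12.1} of $\cal(s)$, using the upper bound hypothesis on $(X,+)$ only insofar as it is already packaged into Remark~\ref{rem:12.1} and Proposition~\ref{prop:12.2}.

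First I would prove $\cal(s_1) + \cal(s_2) \subset \cal(s_1 + s_2)$. Take $u_1 \in \cal(s_1)$ and $u_2 \in \cal(s_2)$, so that $u_1 \pal s_1 = s_1$ and $u_2 \pal s_2 = s_2$. I compute $(u_1 + u_2)\pal(s_1 + s_2)$ by first applying the additivity of the action in the second argument, which gives $(u_1+u_2)\pal(s_1+s_2) = \big((u_1+u_2)\pal s_1\big) + s_2$, and also $= s_1 + \big((u_1+u_2)\pal s_2\big)$. Then I rewrite $(u_1+u_2)\pal s_1 = u_1 \pal (u_2 \pal s_1)$ using the second action axiom. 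Here I would use Remark~\ref{rem:12.1} (valid because $(X,+)$ is upper bound, or rather because $u\pal x = \tlu + x$): $u_2 \pal s_1 = \tlu_2 + s_1$, and since addition with $s_2$ on the right recombines things, the cleanest route is: $(u_1+u_2)\pal(s_1+s_2) = \tl u_1 + \tl u_2 + s_1 + s_2 = (\tl u_1 + s_1) + (\tl u_2 + s_2) = (u_1\pal s_1) + (u_2 \pal s_2) = s_1 + s_2$, using $\widetilde{u_1+u_2} = \tlu_1 + \tlu_2$ and commutativity/associativity of $+$ on $X$. Hence $u_1 + u_2 \in \cal(s_1+s_2)$.

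For the second assertion, suppose $s \leq_X t$, i.e.\ $s + r = t$ for some $r \in X$. Given $u \in \cal(s)$, I want $u \pal t = t$. Using $u \pal t = u\pal(s+r) = (u\pal s) + r = s + r = t$, where the middle equality is additivity of the action in the second argument and $u\pal s = s$ because $u \in \cal(s)$. This gives $\cal(s) \subset \cal(t)$ directly. (Alternatively one can deduce it from the first part by writing $t = s + r$ and noting $0 \in \cal(r)$, so $\cal(s) = \cal(s) + 0 \subset \cal(s) + \cal(r) \subset \cal(s+r) = \cal(t)$.)

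I do not anticipate a genuine obstacle here; the statement is a formal consequence of the action axioms and Remark~\ref{rem:12.1}. The only point requiring a little care is bookkeeping with the identity $\widetilde{u_1+u_2} = \tlu_1 + \tlu_2$ and the reduction $u\pal x = \tlu + x$, both of which have been established just above the statement, so the write-up amounts to assembling these equalities in the right order. I would present the first part via the $u\pal x = \tlu + x$ reduction (shortest) and the second part via the one-line direct computation.
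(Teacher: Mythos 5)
Your argument is correct and follows essentially the same route as the paper. The paper's proof shows, for $u\in\cal(s_1)$, that $u\pal(s_1+s_2)=(u\pal s_1)+s_2=s_1+s_2$, giving $\cal(s_1)\subset\cal(s_1+s_2)$ (and symmetrically $\cal(s_2)\subset\cal(s_1+s_2)$), and then uses the already-established fact that $\cal(s_1+s_2)$ is a submonoid of $V$ to conclude $\cal(s_1)+\cal(s_2)\subset\cal(s_1+s_2)$; your version computes $(u_1+u_2)\pal(s_1+s_2)$ in one go via the reduction $u\pal x=\tlu+x$ from Remark~\ref{rem:12.1}, which is an equally valid bookkeeping of the same action axioms. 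One small remark: neither your proof nor the paper's actually needs the upper bound hypothesis on $(X,+)$ for this proposition — the identity $u\pal x=\tlu+x$ and the submonoid property of $\cal(s)$ hold already from the action axioms alone, so your phrase about the hypothesis being ``packaged into'' Remark~\ref{rem:12.1} slightly overstates what is being used.
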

\begin{proof}
Let $u \in \cal(s_1)$. Then
$$ u + (s_1 + s_2) = (u + s_1 ) + s_2 = s_1 + s_2.$$
Thus $u \in \cal(s_1  + s_2)$, implying that $\cal(s_1) \subset \cal(s_1 + s_2)$.
Similarly,  $\cal(s_2) \subset \cal(s_1 + s_2)$, and so $\cal(s_1) + \cal (s_2)\subset \cal(s_1 + s_2)$.
\end{proof}

We further define
\begin{equation}\label{eq:12.2}
  \cal(S) := \bigcup_{s\in S} \cal(s) = \{ u \in V \ds | \exists s \in S : u \pal s = s\}.
\end{equation}
for any (nonempty) subset $S$ of $X$.
\begin{prop}\label{prop:12.4} $ $
\begin{enumerate} \ealph
\item
If $S$ is closed under addition then $\cal(S)$ is an SA-submodule of $V$.
  \item If $T$ is a second subset of $X$, closed under addition, then
  $$\cal(S) + \cal(T) \subset \cal(S+T). $$
  \item If $S$ and $T$ are cofinal subsets of $X$, i.e., for every $s \in S$ there is some $t \in T$ with $s \leq_X t$, and vice versa, then $\cal(S) = \cal(T)$.

\end{enumerate}
\end{prop}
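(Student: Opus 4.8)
The plan is to derive all three assertions directly from Propositions~\ref{prop:12.2} and~\ref{prop:12.3}, using only the description $\cal(S)=\bigcup_{s\in S}\cal(s)$; no further manipulation of the action $\pal$ is needed.

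For part (a) I would first observe that, when $S$ is closed under addition, the family of submonoids $\cal(s)$, $s\in S$, is upward directed by inclusion: given $s_1,s_2\in S$ we have $s_1+s_2\in S$ and, by Proposition~\ref{prop:12.3}, $\cal(s_1)\subseteq\cal(s_1+s_2)$ and $\cal(s_2)\subseteq\cal(s_1+s_2)$. Since $S$ is nonempty and $0\in\cal(s)$ for every $s$, a directed union of submonoids is again a submonoid, so $\cal(S)$ is a submonoid of $V$. It is moreover summand absorbing in $V$: if $u+v\in\cal(S)$, then $u+v\in\cal(s)$ for one single $s\in S$, and Proposition~\ref{prop:12.2} immediately forces $u\in\cal(s)\subseteq\cal(S)$ and $v\in\cal(s)\subseteq\cal(S)$. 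The only point requiring a moment's care — and the closest thing here to an obstacle — is that summand absorption of each $\cal(s)$ is a property \emph{relative to the ambient monoid $V$} (a condition on all $u,v\in V$), not merely relative to $\cal(s)$; hence it transfers verbatim to the directed union, and the argument is complete.

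For part (b), given $u\in\cal(S)$ and $v\in\cal(T)$ I would pick $s\in S$ with $u\in\cal(s)$ and $t\in T$ with $v\in\cal(t)$; then $u+v\in\cal(s)+\cal(t)\subseteq\cal(s+t)$ by Proposition~\ref{prop:12.3}, and $s+t\in S+T$, so $u+v\in\cal(S+T)$ (closure of $S$ and $T$ under addition is not even used here). For part (c), take $u\in\cal(S)$, say $u\in\cal(s)$ with $s\in S$; the cofinality hypothesis provides $t\in T$ with $s\leqX t$, so $\cal(s)\subseteq\cal(t)$ by the second assertion of Proposition~\ref{prop:12.3}, whence $u\in\cal(t)\subseteq\cal(T)$. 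This gives $\cal(S)\subseteq\cal(T)$, and the reverse inclusion follows by the symmetric use of the other half of the cofinality hypothesis. Thus the whole proposition is, in effect, a repackaging of the two preceding ones.
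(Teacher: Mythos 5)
Your proof is correct and follows essentially the same route as the paper's, which likewise derives all three parts directly from Propositions~\ref{prop:12.2} and~\ref{prop:12.3} via the description of $\cal(S)$ as a union of the $\cal(s)$. You make the directed-union argument for part (a) explicit where the paper leaves it terse (indeed the paper's proof appears to have Propositions~\ref{prop:12.2} and~\ref{prop:12.3} cited in swapped roles, which your version quietly corrects); nothing else is substantively different.
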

\begin{proof}
We infer from the definition \eqref{eq:12.1} of $\cal(S)$ and Proposition \ref{prop:12.2} that $\cal(S)$ is a submodule of $V$. By this proposition also claims (b) and (c) are evident. Given $u_1,u_2 \in V$ with $u_1 + u_2 \in \cal(S)$, there is some $s \in S$ for which  $u_1 + u_2 \in \cal(s)$. By Proposition \ref{prop:12.3} we conclude  that $u_1, u_2 \in \cal(s) \subset \cal(S).$ Thus the submodule $\cal(S)$ is SA in $V$.
\end{proof}

\begin{example}\label{exmp:12.5} Given $x \in V \sm \00 $, i.e., $x > 0$, let
$$ S = \{ nx \ds | n \in \N \}. $$
 We have $x \in C(nx) $ iff $nx= (n+1) x$, and thus  meet the following dichotomy: If there exists
 $n \in \N$ with $nx = (n+1)x$, then $C(S) = C(nx)$ for the smallest such number $n$. Otherwise ~$C(S)$ is the union of the submonoids
 $C(x ) \subsetneqq C(2x) \subsetneqq C(3x) \subsetneqq \cdots $. In this case we write $C(S) = C_\om(x)$.
\end{example}
We turn to the primordial example for an action of $V$ on $X$. Here $X = V$ and $\al$ is given by
$v \pal x := v + x$. We denote the monoids $\cal(x)$ simply by $C(x)$, and then  for any $x \in V$ have
\begin{equation}\label{eq:12.3}
  C(x) = \{ v \in V \ds| v+x = x\}.
\end{equation}
Recall that each of these sets $C(x)$ is an SA-submodule of $V$, and for any $x,y \in V$ the submonoid $C(x+y)$ contains $C(x)$ and $C(y)$, whence
\begin{equation}\label{eq:12.4}
  x \leq_V y \dss\Rightarrow C(x) \subset C(y).
\end{equation}
Turning to the SA-submonoids
$C(S) := \bigcup_{s\in S} C(s)$, for $S$ a subset of $V$, closed under addition, the following case deserves special interest. Writing $\leq $ instead of $\leq_V$, for short, an archimedean feature comes into sight.
\begin{defn}\label{def:12.6}
The archimedean class of any $x \in V $ is
$$ \Arch(x):= \{ y \ds | \exists n: y \leq nx, \exists m: x \leq my \} .$$
\end{defn}
The following is easily seen and certainly very well known.

\begin{rem}\label{rem:12.7}
  Let $x,y \in V \sm \00$ be in the  same archimedean class. Then either both sequences
  $\{ mx \ds | m \in \N \} $, $\{ ny \ds | n \in \N \} $ become constant of values
  $m_1 x$, $n_1 y$, and $\Arch(m_1 x) = \Arch (n_1 y)$, or $C_\om(x) = C_\om(y).$ Of course, $\Arch(0) = \00$.
\end{rem}

In any upper bound monoid $(X,+)$ we define archimedean classes as above in Definition \ref{def:12.6}, here with the ordering $\leq_X$ on $X$.

Assume now that an action $\al: V \times X \to X$ is given, where both $V$ and $X$ are modules over a semiring $R \neq \00$, and, as before, $X$ is upper bound. Then, for any $x \neq 0$ in $X$, the set
\begin{equation}\label{eq:12.5}
  R_x := \{ \lm \in R \ds | \lm \cdot C(x) \subset C(x)\}
\end{equation}
is a subsemiring of $R$, as is immediate from the fact that $C(x)$ is a submonoid of $V$. Consequently,
\begin{equation}\label{eq:12.6}
  \N_0 \cdot 1_R \subset  R_x,
\end{equation}
since $ \N_0 \cdot 1_R$ is the unique smallest subsemiring of $R$.

A subset $Y$ of $R$
is called \textbf{convex}
in $R$ (with respect to the quasiordering $\leq_R$) if for $\lm_1 \leq \lm_2$ in $Y$ every $\lm \in R$ with $\lm_1 \leq \lm \leq  \lm_2$ is also in $R$.
\begin{prop}\label{prop:12.8}
For every $x \in X$ the semiring $R_x$ is convex in $R$.
\end{prop}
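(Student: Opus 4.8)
The plan is to prove the stronger statement that $R_x$ is a \emph{lower set} for the quasiordering $\leq_R$ on $R$; since every lower set is a fortiori convex, this settles the proposition. So let $\lambda_1,\lambda_2\in R_x$ and $\lambda\in R$ with $\lambda_1\leq_R\lambda\leq_R\lambda_2$; in fact I will only use the inequality $\lambda\leq_R\lambda_2$ together with $\lambda_2\in R_x$. Choose $\nu\in R$ with $\lambda+\nu=\lambda_2$, and fix an arbitrary $v\in C(x)$. The goal is to show $(\lambda v)\pal x=x$, i.e.\ $\lambda v\in C(x)$; since $v$ was arbitrary this yields $\lambda\cdot C(x)\subset C(x)$, that is $\lambda\in R_x$.

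The mechanism is a two-sided squeeze using Remark~\ref{rem:12.1} and the fact that $X$ is upper bound, so that $\leq_X$ is antisymmetric. On one hand, Remark~\ref{rem:12.1} gives $(\lambda v)\pal x\geq_X x$. On the other hand, since $\lambda_2\in R_x$ and $v\in C(x)$ we have $\lambda_2 v\in C(x)$, hence $(\lambda_2 v)\pal x=x$; using the $R$-module identity $\lambda_2 v=\lambda v+\nu v$ in $V$, then the action axiom $(u_1+u_2)\pal x=u_1\pal(u_2\pal x)$ (together with commutativity of $+$ in $V$), and finally Remark~\ref{rem:12.1} once more, one gets
\[
  x \;=\; (\lambda_2 v)\pal x \;=\; (\lambda v+\nu v)\pal x \;=\; (\nu v)\pal\bigl((\lambda v)\pal x\bigr) \;\geq_X\; (\lambda v)\pal x .
\]
Combining the two inequalities and invoking antisymmetry of $\leq_X$ forces $(\lambda v)\pal x=x$, as wanted. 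The argument is insensitive to whether $x=0$ or not, so it also covers that boundary case of the statement.

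The only point that demands a little care — and it is the sole genuinely structural step, not a routine computation — is that the action axioms relate only the two additive monoid structures of $V$ and $X$ and say nothing about scalar multiplication. Hence any manipulation turning $\lambda_2 v$ into an expression involving $\lambda v$ must first be carried out inside $V$ by distributivity of the module, and only afterwards may the action axiom $(u_1+u_2)\pal x=u_1\pal(u_2\pal x)$ be applied. Once this ordering of steps is respected there is no obstacle: everything else is just the elementary inequality $u\pal x\geq_X x$ from Remark~\ref{rem:12.1} combined with the antisymmetry coming from the upper bound hypothesis on $X$.
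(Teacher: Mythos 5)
Your proof is correct, and it is essentially the same squeeze as the paper's: sandwich $(\lambda v)\pal x$ between $x$ from below and $(\lambda_2 v)\pal x=x$ from above using Remark~\ref{rem:12.1}, the action axiom applied after distributing inside $V$, and the antisymmetry of $\leq_X$. The only (welcome) refinement is that you drop the lower bound $\lambda_1$ entirely, since $x\leq_X(\lambda v)\pal x$ already comes for free from Remark~\ref{rem:12.1}; thus you actually show the nominally stronger statement that $R_x$ is a lower set, which for the record is logically equivalent to convexity here because $0_R\in R_x$ and $0_R$ is the least element of $(R,\leq_R)$.
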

\begin{proof}
  Let $\lm_1, \lm_2 \in R_x$ with $\lm_1\leq_R \lm \leq_R \lm_2$, i.e., $\lm_1 + \mu = \lm$, $\lm + \nu = \lm_2$ for some $\mu, \nu \in R$. Given $u \in C(x)$, we have
  $\lm_1 u + \mu u = \lm u$, $\lm u + \nu u= \lm_2u$. Thus
  $$
  \begin{array}{ll}
    x & = (\lm_1 u) \pal x \leq (\lm_1 u) \pal x + (\mu u) \pal x  \\[1mm]
    & = (\lm u) \pal x \leq (\lm u) \pal x +(\nu u) \pal x \\[1mm] & = (\lm_2 u) \pal x =x.
  \end{array}
  $$
  This implies that
$(\lm u) \pal x =x$, and proves that  $\lm u \in C(x)$.
\end{proof}
\begin{defn}\label{def:12.8}
Let $\mfo_R$ denote the convex hull of $\N_0 \cdot 1_R$ in the semiring $R$ with respect to ~ $\leq_R$. It is the smallest SA-subsemiring of $R$, cf. \cite[\S5]{Dec}.
\end{defn}
\begin{cor}\label{cor:12.10}
  For any subset $S$ of $X$, closed under addition, $\cal(x)$ is an $\mfo_R$-submodule of $V$.
\end{cor}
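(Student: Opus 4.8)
The plan is to read the statement off from Proposition~\ref{prop:12.8} together with the description of $\mfo_R$ in Definition~\ref{def:12.8}; essentially all of the work has already been done, so this is short. Since $S$ is closed under addition, Proposition~\ref{prop:12.4}(a) already gives that $\cal(S)$ is a submonoid of $(V,+)$ --- in fact an SA one --- so $0 \in \cal(S)$ and $\cal(S)$ is closed under addition; the module axioms are inherited by restricting scalars from $R$ to $\mfo_R$. Hence the only point left to check is that $\cal(S)$ is stable under multiplication by $\mfo_R$, i.e.\ $\mfo_R \cdot \cal(S) \subset \cal(S)$.

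For this, recall from \eqref{eq:12.2} that $\cal(S) = \bigcup_{s \in S} \cal(s)$; since scalar multiplication distributes over unions it suffices to show $\mfo_R \cdot \cal(s) \subset \cal(s)$ for each individual $s \in S$. By the definition of $R_s$ (cf.\ \eqref{eq:12.5}) this is exactly the assertion $\mfo_R \subset R_s$. Now $R_s$ is a convex subset of $(R, \leq_R)$ by Proposition~\ref{prop:12.8}, and it contains $\N_0 \cdot 1_R$ by \eqref{eq:12.6}. Since an arbitrary intersection of convex subsets of $R$ is again convex, the convex hull of $\N_0 \cdot 1_R$ in $R$ is the smallest convex subset of $R$ containing $\N_0 \cdot 1_R$, and is therefore contained in $R_s$; by Definition~\ref{def:12.8} this convex hull is precisely $\mfo_R$. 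Thus $\mfo_R \subset R_s$, i.e.\ $\mfo_R \cdot \cal(s) \subset \cal(s)$, for every $s \in S$. Taking the union over $s$ gives $\mfo_R \cdot \cal(S) \subset \cal(S)$, which together with the additive closure noted above shows that $\cal(S)$ is an $\mfo_R$-submodule of $V$.

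There is no genuine obstacle here: the one thing worth spelling out is the elementary fact that the convex hull of a set equals the intersection of all its convex supersets, which is what upgrades the inclusion $\N_0 \cdot 1_R \subset R_s$ from \eqref{eq:12.6} to $\mfo_R \subset R_s$. If one reads the statement literally with a single vector $x \in X$ in place of $S$, the same argument applies with $S = \{x\}$ --- or, even more directly, to the single SA-submonoid $\cal(x)$ of Proposition~\ref{prop:12.2}.
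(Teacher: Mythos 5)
Your proof is correct and follows essentially the same route as the paper's: both establish $\mfo_R \subset R_s$ for each $s\in S$ via the convexity of $R_s$ (Proposition~\ref{prop:12.8}) and the containment \eqref{eq:12.6}, then pass to the union $\cal(S)=\bigcup_{s\in S}\cal(s)$. You spell out a bit more explicitly why the convex hull is contained in every convex superset and why the union behaves well under scalar multiplication, but the substance is identical.
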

\begin{proof}
  For any $s \in S$ $R_s$ is a convex subsemiring of $R$ containing $\N_0 \cdot 1_R$, whence containing~ $\mfo_R$. Thus $\cal(s)$ is an $\mfo_R$-submodule of $V$. The same holds for the union (=sum) $\cal(S)$ of the $\cal(s)$, $s \in S$,  cf. \eqref{eq:12.2}.
\end{proof}

\section{Amalgamation in the category of upper bound monoids}\label{sec:13}

The amalgamation theory for submodules of an additive monoid $(V,+)$, as developed in \S\ref{sec:1}-\S\ref{sec:5}, can be amended in a natural way to an amalgamation of upper-bound monoids, since there is a canonical reflection $V \to \brV$ from the former to the latter category (cf. \cite[\S5]{IKR1}).

$(\brV,+)$ arises from $(V,+)$ by dividing out the natural congruence  relation $\equiv_V$, which  turns the quasiordering  $\leq_V$ on $V$ to a (partial) ordering $\leq_{\brV}$ on $\brV = V / \equiv_V$. This congruence is  given by
$$ x \equiv_V y \dss\Leftrightarrow x \leq_V y \text{ and } y \leq_V x.$$
We denote the congruence class of a vector $x \in V$ by $\brx$, and then have for $x,y \in V$ the explicit description
\begin{equation}\label{eq:13.1}
  \brx = \bry \dss\Leftrightarrow \exists z,w \in V: x+z = y, y+w = x.
\end{equation}
We name $\brV$ the \textbf{upper bound  monoid associated to $V$}.

If $V$ is a module over a semiring $R$, then it is immediate that $\brV$ is a module over the upper bound semiring $\brR = R / \equiv$, with scalar mutiplication given by
\begin{equation}\label{eq:13.2}
  \bra \bar v = \overline{av}
\end{equation}
for $a \in R $, $v \in V$. Then consequently, we say that the $\brV$ is the upper bound  $\brR$-\textbf{module associated to $V$}.

Below we most of the time work in the category of $R$-modules for $R$ an upper bound  semiring. Monoids  can be subsumed here  by taking
$R = \brR =\N_0$.

As common, we say that a subset $S \subset V $ is \textbf{convex}, if $s \in S$  for any $s_1, s_2 \in S$, $s\in V$ with $s_1 \leq_V s \leq_V s_2$.\footnote{In the special case $V=R$  already   defined in \S\ref{sec:12}.}
We cite the following useful fact, valid in any monoid $(V,+)$.

\begin{prop}[{\cite[Proposition 5.7]{IKR1}}]\label{prop:13.1}
  A submodule $S$ of $V$ is SA in $V$ iff $S$ is a union of congruence classes and $\brS= S/ \equiv_V $ is SA in $\brV$.
\end{prop}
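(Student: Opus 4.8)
We must prove: a submodule $S$ of $V$ is SA in $V$ if and only if $S$ is a union of congruence classes (w.r.t. $\equiv_V$) and $\brS = S/{\equiv_V}$ is SA in $\brV$.

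The plan is to argue both implications directly from the explicit description \eqref{eq:13.1} of the congruence $\equiv_V$ and from the definition of SA. First I would settle a preliminary observation: for an SA-submodule $S$ of $V$, if $x \in S$ and $x \equiv_V y$, then $y \in S$. Indeed, $x \equiv_V y$ means $x + z = y$ and $y + w = x$ for some $z, w \in V$; from $y + w = x \in S$ and $S$ being SA we immediately get $y \in S$. Hence any SA-submodule is automatically a union of congruence classes, which is half of the ``only if'' direction and removes any subtlety about whether $\brS$ is even well-defined as a subset of $\brV$.

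For the ``only if'' direction it then remains to show $\brS$ is SA in $\brV$. Take $\brx, \bry \in \brV$ with $\brx + \bry \in \brS$, i.e. $\overline{x+y} = \brs$ for some $s \in S$. By the preliminary observation applied in $V$ (since $x + y \equiv_V s \in S$ forces $x + y \in S$), and then by SA of $S$ in $V$, we get $x \in S$ and $y \in S$, hence $\brx, \bry \in \brS$. For the ``if'' direction, assume $S$ is a union of congruence classes and $\brS$ is SA in $\brV$. Let $x, y \in V$ with $x + y \in S$. Then $\overline{x} + \overline{y} = \overline{x+y} \in \brS$, so by SA of $\brS$ we get $\brx \in \brS$ and $\bry \in \brS$; that is, $\brx = \brs_1$ and $\bry = \brs_2$ for some $s_1, s_2 \in S$. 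Since $S$ is a union of congruence classes and $x \equiv_V s_1$, we conclude $x \in S$, and likewise $y \in S$. This closes the argument.

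The proof is essentially bookkeeping, so there is no serious obstacle; the one point that deserves care is making sure the quotient map $V \to \brV$ is a monoid (and, if $V$ is an $R$-module, an $R$-module) homomorphism so that $\overline{x+y} = \bar x + \bar y$ and $\overline{\lambda x} = \lambda \bar x$ — but this is exactly how $\brV$ and its operations \eqref{eq:13.2} were constructed, so it may be invoked directly. I would also remark that the equivalence uses nothing about $R$ beyond the module structure, so it applies verbatim to monoids via $R = \N_0$.
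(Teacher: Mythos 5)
Your proof is correct and complete. The paper does not actually prove this statement—it only cites it from \cite[Proposition~5.7]{IKR1}—so there is no in-paper argument to compare against, but your argument is the natural one: the preliminary observation (that an SA-submodule $S$ absorbs its whole congruence class, since $x\equiv_V y$ gives $y+w=x\in S$ and SA forces $y\in S$) is precisely what makes the statement sensible, showing that SA already implies ``union of congruence classes'' so that $\brS$ is an honest subset of $\brV$; both directions then reduce to the fact that the quotient $V\to\brV$ is a surjective monoid (resp.\ $R$-module) homomorphism, so sums lift and descend correctly. The bookkeeping in the ``if'' direction, where you pass from $\brx\in\brS$ to $x\in S$ via the union-of-classes hypothesis, is exactly where that hypothesis is used and cannot be dropped. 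No gaps.
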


In \S\ref{sec:12} we started a study of the SA-submodule $C(x) = \{ v \in V \ds | v+x\} $ of $V$ for every $x \in V$ in the case that $V$ is an upper bound monoid. We now continue this study for  the upper bound monoid $\brV = V / \equiv_V$ associated to \emph{any} additive monoid $(V,+)$, but instead of arguing in $\brV$ and then passing to $V$ by the use of Proposition \ref{prop:13.1}, we work directly in $V$ by using the quasiordering $\leq_V$.  For a given $x \in V$ we define
\begin{equation}\label{eq:13.3}
  \brC(x) := \{ u \in V \ds | u +x \leq_V x\}.
\end{equation}
Since always $x \leq_V u +x$, this means that
\begin{equation}\label{eq:13.4}
  \brC(x) = \{ u \in V \ds | u +x \equiv_V x\},
\end{equation}
and thus, if $V$ happens to be upper bound, $\brC(x) = C(x)$.
More generally we may assume that $V$ is an $R$-module, $R$ any semiring. Then $\brV$ is an $\brR$-module for $\brR = R/ \equiv_R$. Let
$$\mfo_R = \conv(\N_0\cdot 1_R)$$ be
the convex hull of $\N_0 \cdot 1_R$ in $R $ with respect to $\leq_R$. Then we conclude by Corollary \ref{cor:12.10} that $\brC(x)$ is an $\mfo_R$-submodule of $V$. We often write $\leq, \equiv, \dots,$ for $\leq_V, \equiv_V, \dots$, when  the ambient monoid $V$ is clear from the context.
\begin{prop}\label{prop:13.2}
$\brC(x)$ is an SA-submonoid  of $V$.
\end{prop}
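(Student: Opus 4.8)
The plan is to verify the summand‑absorbing property directly from the definition, using nothing beyond transitivity of the quasiordering $\leq_V$. Recall that $\brC(x) = \{u \in V \mid u + x \leq_V x\}$, and that it has already been observed (via Corollary~\ref{cor:12.10}) to be an $\mfo_R$-submodule of $V$; so only the SA-condition remains to be checked.

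First I would take arbitrary $u_1, u_2 \in V$ with $u_1 + u_2 \in \brC(x)$, i.e.\ $u_1 + u_2 + x \leq_V x$, and aim to conclude $u_1 \in \brC(x)$ and $u_2 \in \brC(x)$. The key observation is the identity $(u_1 + x) + u_2 = u_1 + u_2 + x$, which by the very definition of $\leq_V$ (with witness $u_2 \in V$) gives $u_1 + x \leq_V u_1 + u_2 + x$. Chaining this with $u_1 + u_2 + x \leq_V x$ and invoking transitivity of $\leq_V$ yields $u_1 + x \leq_V x$, that is $u_1 \in \brC(x)$; by commutativity of addition the identical argument produces $u_2 + x \leq_V x$, i.e.\ $u_2 \in \brC(x)$. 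That would complete the verification.

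I do not anticipate any genuine obstacle. The only point requiring a moment's care is that the SA-condition quantifies over all $u_1, u_2 \in V$, not merely over elements of $\brC(x)$; but the comparison $u_1 + x \leq_V (u_1 + u_2) + x$ is valid for every pair $(u_1,u_2)$, so this causes no difficulty. As a more conceptual alternative, one could instead note --- using \eqref{eq:13.4} --- that $u \in \brC(x)$ exactly when $\bar u + \brx = \brx$ in $\brV$, so that $\brC(x)$ is the full preimage of $C(\brx)$ under the reflection $V \to \brV$ and in particular a union of $\equiv_V$-classes; since $\brV$ is upper bound, $C(\brx)$ is SA in $\brV$ by Proposition~\ref{prop:12.2}, and then Proposition~\ref{prop:13.1} delivers that $\brC(x)$ is SA in $V$.
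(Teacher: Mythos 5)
Your main argument is exactly the paper's: from $u_1+u_2+x \leq_V x$ you note $u_1+x \leq_V (u_1+u_2)+x$ (witness $u_2$) and chain by transitivity to get $u_1+x \leq_V x$, with the symmetric argument for $u_2$, which is word-for-word Proposition~\ref{prop:13.2}'s SA step; handling the submonoid half via Corollary~\ref{cor:12.10} rather than re-deriving it directly is a cosmetic difference, since the paper invokes that corollary in the sentence immediately preceding the proposition. The alternative route you sketch (pull back $C(\brx)$ along $V \to \brV$ using Propositions~\ref{prop:12.2} and~\ref{prop:13.1}) is also sound and is in fact the conceptual picture the paper sets up at the start of \S\ref{sec:13}, though the paper's own proof of 13.2 deliberately avoids the detour and works directly with $\leq_V$.
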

\begin{proof}
If $u_1 + x \leq x$, $u_2 + x \leq x$, then $u_1 + u_2 + x \leq u_1 + x \leq x$. Thus $\brC(x)$ is a submonoid. Conversely, if $u_1 + u_2 + x \leq x$, then $u_1 + x \leq u_1 + u_2 + x \leq x$, and also $u_2 + x \leq x$. Thus $\brC(x)$ is SA in $V$.
\end{proof}
\begin{prop}\label{prop:13.3} If $x \leq_V x'$, then $\brC(x) \subset \brC(x')$. Consequently, if  $x \equiv_V x'$, then $\brC(x) = \brC(x')$.
\end{prop}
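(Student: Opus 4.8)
The plan is to unwind the definition $\brC(x) = \{ u \in V \mid u + x \leq_V x \}$ from \eqref{eq:13.3} and use only the fact that the quasiordering $\leq_V$ is compatible with addition. First I would record this compatibility explicitly: if $a \leq_V b$, say $a + c = b$, then for every $d \in V$ one has $(a+d) + c = (b+d)$, so $a + d \leq_V b + d$. (This is the additive analogue of the monotonicity already noted for scalar multiplication in \S\ref{sec:10}.)

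Next, assume $x \leq_V x'$ and pick $z \in V$ with $x + z = x'$. Let $u \in \brC(x)$, so $u + x \leq_V x$. Adding $z$ on both sides and using the compatibility just observed gives $u + x + z \leq_V x + z$, that is, $u + x' \leq_V x'$. Hence $u \in \brC(x')$, which proves $\brC(x) \subset \brC(x')$.

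For the ``consequently'' clause, if $x \equiv_V x'$ then by \eqref{eq:13.1} (or directly by the definition of $\equiv_V$) we have both $x \leq_V x'$ and $x' \leq_V x$, so the inclusion just proved applies in both directions and yields $\brC(x) = \brC(x')$.

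I do not expect any genuine obstacle here: the only ingredient beyond the definitions is that $\leq_V$ respects addition, which is immediate from how $\leq_V$ is defined. The statement is essentially the monotonicity of $x \mapsto \brC(x)$, parallel to \eqref{eq:12.4} for $C(x)$ in the upper bound case, and the proof for $\brC(x)$ in an arbitrary monoid is the same modulo replacing equalities by $\leq_V$.
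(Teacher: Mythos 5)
Your proof is correct and follows essentially the same route as the paper: pick $z$ with $x + z = x'$, add $z$ to both sides of $u + x \leq_V x$ to get $u + x' \leq_V x'$, and deduce the second claim from the first by applying it in both directions. The only difference is that you explicitly isolate the compatibility of $\leq_V$ with addition as a preliminary observation, which the paper uses tacitly.
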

\begin{proof}
If $x \leq_V x'$, then $x' = x + y$ for some $y \in V$. Thus $u + x \leq_V x$ implies
$u +x + y \leq_V x + y$, i.e., $u + x' \leq_V x'$.
\end{proof}

\begin{defn}\label{def:13.4}
For any $x \in V$ we introduce the subset
$$ \brC_\om(x) := \bigcup_{n \in \N} \brC(nx)$$
of $V$.
\end{defn}
Since by Proposition \ref{prop:13.2}
\begin{equation}\label{eq:13.5}
   \brC( n x)  \subset \brC(nx + x) = \brC((n+1)x)
\end{equation}
it is clear, that $ \brC_\om(x) $ is again an SA-submodule of $V$.
Consequently, in the case that $V$ is upper bound  we define
$$ C_\om(x) := \bigcup_{n \in \N} C(nx), $$
which extends the notation in Example \ref{exmp:12.5} to all $x \in V$.
\begin{defn}\label{def:13.5} The \textbf{archimedean class} $\Arch_V(x)$ of an element $x \in V$ is the set of all $y \in V$ such that $x \leq_V n y$ and $y \leq_V m x $ for some $n,m \in \N$.
\end{defn}

\begin{lem}\label{lem:13.6}
Let $x,y \in V$, and assume that $x \leq_V my$ for some $m \in \N$.
Then $\brC(x) \subset \brC(my)$ and $\brC_\om(x) \subset \brC_\om(y)$.
\end{lem}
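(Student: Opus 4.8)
The plan is to prove both inclusions by unwinding the definition of $\brC$ in terms of the quasiordering $\leq_V$ and using the monotonicity already established in Proposition~\ref{prop:13.3}. The hypothesis is $x \leq_V my$, so write $my = x + z$ for some $z \in V$. For the first inclusion $\brC(x) \subset \brC(my)$: this is literally a special case of Proposition~\ref{prop:13.3} with $x' := my$, since $x \leq_V my$; alternatively, given $u \in \brC(x)$, i.e.\ $u + x \leq_V x$, add $z$ to both sides to get $u + x + z \leq_V x + z$, that is, $u + my \leq_V my$, so $u \in \brC(my)$.

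For the second inclusion $\brC_\om(x) \subset \brC_\om(y)$, recall $\brC_\om(x) = \bigcup_{n \in \N} \brC(nx)$. Fix $n \in \N$ and take $u \in \brC(nx)$. Since $x \leq_V my$, scaling by $n$ in the monoid (i.e.\ adding $n$ copies) gives $nx \leq_V n(my) = (nm)y$. Now apply the first inclusion (or Proposition~\ref{prop:13.3} again) with the roles played by $nx$ and $(nm)y$: $u \in \brC(nx) \subset \brC((nm)y) \subset \brC_\om(y)$, since $nm \in \N$. As this holds for every $n$, we conclude $\brC_\om(x) \subset \brC_\om(y)$.

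The only mild subtlety, which I would state explicitly, is that $\leq_V$ is compatible with the additive structure in the sense that $a \leq_V b$ implies $a + c \leq_V b + c$ (immediate from the definition $a + d = b$ $\Rightarrow$ $(a+c) + d = b + c$) and hence $ka \leq_V kb$ for every $k \in \N$; this is what lets me pass from $x \leq_V my$ to $nx \leq_V (nm)y$. There is no real obstacle here — the lemma is a routine bookkeeping step assembling Proposition~\ref{prop:13.3}, the definition of $\brC_\om$, and the additivity of the quasiordering — so the proof is just a few lines. I would write it as: "By Proposition~\ref{prop:13.3}, $x \leq_V my$ gives $\brC(x) \subset \brC(my)$. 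For the second claim, fix $n$; adding $z$ (where $my = x+z$) to itself $n$ times yields $nx \leq_V nmy$, so by Proposition~\ref{prop:13.3} $\brC(nx) \subset \brC(nmy) \subset \brC_\om(y)$; taking the union over $n$ finishes the proof."
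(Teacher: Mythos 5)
Your proof is correct and takes essentially the same route as the paper, which also observes $kx \leq_V kmy$ for every $k\in\N$ and invokes Proposition~\ref{prop:13.3} to get $\brC(kx)\subset\brC(kmy)$, yielding both claims at once. Your explicit note that $\leq_V$ is compatible with addition (and hence with $\N$-scaling) spells out a step the paper leaves implicit, but it is the same argument.
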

\begin{proof} For every $k\in \N$ we have $kx \leq_V k my$, whence  $\brC(kx) \subset \brC(kmy)$ by  Proposition \ref{prop:13.3}. This gives both claims.
\end{proof}

The following in now evident.
\begin{prop}\label{prop:13.7}
  If $\Arch_V(x) = \Arch_V(y)$, then $\brC_\om(x) = \brC_\om(y)$.
\end{prop}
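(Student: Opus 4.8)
The statement to prove is Proposition~\ref{prop:13.7}: if $\Arch_V(x) = \Arch_V(y)$, then $\brC_\om(x) = \brC_\om(y)$. The whole point of the preceding Lemma~\ref{lem:13.6} is that it gives a one-directional comparison from a single inequality of the form $x \leq_V my$; so the plan is simply to unpack the hypothesis $\Arch_V(x) = \Arch_V(y)$ into two such inequalities and apply the lemma twice, once in each direction.

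\medskip
\emph{First}, I would recall (Definition~\ref{def:13.5}) that $\Arch_V(x)$ is the set of $y$ with $x \leq_V ny$ and $y \leq_V mx$ for some $n,m \in \N$. Since $\Arch_V(x) = \Arch_V(y)$ and obviously $y \in \Arch_V(y)$, we get $y \in \Arch_V(x)$; in particular there is some $m \in \N$ with $x \leq_V my$. \emph{Second}, by the same token $x \in \Arch_V(x) = \Arch_V(y)$, so there is some $n \in \N$ with $y \leq_V nx$. \emph{Third}, apply Lemma~\ref{lem:13.6} to the inequality $x \leq_V my$: this yields $\brC_\om(x) \subset \brC_\om(y)$. \emph{Fourth}, apply Lemma~\ref{lem:13.6} again, this time to $y \leq_V nx$ (swapping the roles of $x$ and $y$): this yields $\brC_\om(y) \subset \brC_\om(x)$. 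Combining the two inclusions gives $\brC_\om(x) = \brC_\om(y)$, as desired.

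\medskip
\textbf{Main obstacle.} Honestly, there is no real obstacle here: the proposition is a two-line corollary of Lemma~\ref{lem:13.6}, which already did the work. The only thing to be slightly careful about is the quantifier bookkeeping in Definition~\ref{def:13.5} — one must note that membership $y \in \Arch_V(x)$ supplies an inequality with $y$ on the \emph{larger} side ($x \leq_V ny$), which is exactly the shape $x \leq_V my$ that Lemma~\ref{lem:13.6} consumes, and symmetrically for $x \in \Arch_V(y)$. Since the archimedean-class relation is visibly symmetric (it is the conjunction of two mutually reversed comparabilities), extracting both inequalities is immediate. Thus the proof is just: both $x$ and $y$ lie in the common archimedean class, each membership hands us one inequality, and Lemma~\ref{lem:13.6} converts each inequality into one of the two inclusions.
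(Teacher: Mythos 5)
Your proof is correct and takes exactly the route the paper intends: the paper simply declares Proposition~\ref{prop:13.7} ``now evident'' after Lemma~\ref{lem:13.6}, and your two applications of that lemma (one for each inclusion) are precisely what makes it evident. One minor economy you could note: membership $y \in \Arch_V(x)$ already supplies \emph{both} inequalities $x \leq_V ny$ and $y \leq_V mx$ by Definition~\ref{def:13.5}, so the second appeal to $x \in \Arch_V(y)$ is redundant, though harmless.
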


More generally, given a subset $S$ of $V$ with $S +S \subset S$, the subset
\begin{equation}\label{eq:13.6}
   \brC(S) := \bigcup_{s \in S} \brC(s)
\end{equation}
of $V$ is an SA-submodule, since for $x \in \brC(s)$, $ y \in \brC(t)$ we have
$$ x + y \in  \brC(s) +  \brC(t) \subset  \brC(s + t)$$
(cf. Proposition \ref{prop:13.3}), and $\brC(s+t)$ is an SA-submodule of $V$. Again $\brC(S)$ is an $\mfo_R$-submodule of $V$ in the case that $V$ is an $R$-module. For any $x \in V$
\begin{equation}\label{eq:13.7}
   \brC_\om(x) =  \brC( \N x).
\end{equation}

Given two subsets $S$ and $T$ of $V$, closed under addition, suppose that for any $t \in T$ there is some $s \in S$ with $t \leq s$. Then $\brC(T) \subset \brC(S)$. It follows that $\brC(T) = \brC(S)$, if $S$ and $T$ are cofinal under $\leq_V$.

We are ready to construct an additive monoid which is the amalgamation of submonoids $A_1, \dots, A_r$, in which for any tuple $(x_1, \dots, x_r) \in A_1 \times \dots \times A_r$
the family $(\brC_\om(x_1), \dots, \brC_\om(x_r) )$ has amalgamation in $V$ and
$\brC_\om(x_1)+  \cdots +  \brC_\om(x_r)$ is SA in $V$.

Starting with finitely many submonoids $A_1, \dots, A_r$ of an additive monoid $V_0$, we introduce the   amalgamation (cf. \S\ref{sec:4})
\begin{equation}\label{eq:13.8}
  V = A_1 \infty_{V_0} \cdots \infty_{V_0} A_r.
\end{equation}
We identify each $A_k$ with the submonoid $j_k(A_k)$ of $V$, as explained in \S\ref{sec:4}.
Then
\begin{equation}\label{eq:13.9}
  V = A_1 + \cdots + A_r.
\end{equation}
Here two tuples $(a_1, \dots, a_r)$, $(b_1, \dots, b_r) $ in $A_1 \times \dots \times A_r$ with the same sum $a_1 + \cdots +  a_r = b_1 +  \cdots + b_r$ are exchange equivalent in $V$.
Given $x = x_1 + \cdots + x_r$, $x_k \in A_k$, we define the submonoid
\begin{equation}\label{eq:13.10}
  W = W(x) := \{ y \in V \ds | \exists n \in \N : y \leq_V n x\}.
\end{equation}
 It is the convex hull of $\N x$ in $V$ (with respect to $\leq_V$). For any $z \in W$
 $$\begin{array}{ll}
   \brC_\om(x) & = \{ u \in V \ds | \exists n \in \N : u \leq_V n z \}  =
 \{ u \in W \ds | \exists n \in \N : u \leq_V n z \},
   \end{array}
   $$
 since $W$ is convex (=SA) in $V$, and so $\brC_\om(z)$ is SA in $W$. In particular, each $\brC_\om(x_k)$ is SA in $W$, and
\begin{equation}\label{eq:13.11}
\brC_\om(x) = \brC_\om(x_1) \infty_V \ds\cdots \infty_V \brC_\om(x_r) \underset{\SA}{\subset} W,
\end{equation}
 as follows from Theorem \ref{thm:4.6}.

Furthermore, if $x' = x_1' + \cdots + x_r'$ is a vector with $x_k' \in A_k$, and $x \leq x'$, then we infer from  ~\eqref{eq:13.1} that $W(x) \subset W(x').$ Thus we meet a hierarchy of amalgamated SA-submodules of $V$,
\begin{equation}\label{eq:13.12}
 \brC_\om(x_1) \infty \ds \cdots \infty \brC_\om(x_r) \ds  \subset \brC_\om(x_1') \infty \ds \cdots \infty \brC_\om(x_r').
\end{equation}
This construction can be enlarged by choosing instead of the vectors $x_k \in A_k$ subsets $S_k$ of~ $A_k$ which are closed under addition. In a complete analogy to  arguments above we obtain the following.

\begin{thm}\label{thm:13.8}
Take $ V = A_1 \infty \cdots \infty = A_1 + \cdots + A_r$ as above (cf. \eqref{eq:13.8}, \eqref{eq:13.9}). For every $k \in \{ 1, \dots, k \} $ let $S_k$ be a subset of $A_k$ closed under addition, and furthermore
$$ S := S_1 + \cdots + S_r \subset A_1 + \cdots + A_r =V.$$ Then
$$ \brC(S_k) = \{ z  \in A_k \ds | \exists s \in S_k : z+ s \leq_V s \}  $$
is an SA-submonoid of $A_k$,
$$ \brC(S) = \{ z  \in V \ds | \exists s \in S_k : z+ s \leq_V s \}  $$
is an SA-submonoid of $A_k$, and $\brC(S)$ is the amalgamation of the submonoids $\brC(S_1), \dots, \brC(S_r)$ in $V$. If the $A_k$ are $R$-modules, $R$ any semiring, then all $\brC(S_k)$ and $\brC(S)$ are $\mfo_R$-submodules of $V$.
\end{thm}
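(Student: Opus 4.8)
The plan is to run the argument in complete parallel to the derivation of \eqref{eq:13.11}--\eqref{eq:13.12}, the additively closed sets $S_k\subset A_k$ and $S=S_1+\cdots+S_r$ taking the roles of $\N x_k$ and $\N x$, so that $\brC(S)$ is the direct analogue of $\brC_\om(x)=\brC(\N x)$. First I would settle the SA-statements. By Proposition~\ref{prop:13.2} every $\brC(s)$, $s\in V$, is SA in $V$, and by Proposition~\ref{prop:13.3} $\brC(s)\subseteq\brC(s')$ whenever $s\leq_V s'$. Since $S_k$ is closed under addition the family $\{\brC(s)\cap A_k:s\in S_k\}$ is therefore upward directed, each member being SA in $A_k$ as the intersection of an SA-submodule of $V$ with the submodule $A_k$, so its union $\brC(S_k)$ is an SA-submonoid of $A_k$; likewise $\brC(S)=\bigcup_{s\in S}\brC(s)$ is a directed union of SA-submodules of $V$, hence SA in $V$, as already remarked around~\eqref{eq:13.6}. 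The $\mfo_R$-module assertion I would read off Corollary~\ref{cor:12.10}: each $\brC(s)$ is an $\mfo_R$-submodule, and so is any directed union of such.

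For the amalgamation I would pass to the convex hull of $S$ in $V$, $W:=\bigcup_{s\in S}\{\,y\in V:\exists n\in\N,\ y\leq_V ns\,\}$; since $S+S\subset S$ this is a directed union of convex, hence SA, submodules, so $W\in\SA(V)$, and $W\supseteq S$ and $W\supseteq\brC(S_k),\brC(S)$. Corollary~\ref{cor:4.7} applied to $V=A_1\infty\cdots\infty A_r=A_1+\cdots+A_r$ and to $W$ gives that $(A_1\cap W,\dots,A_r\cap W)$ has $\AM_V$ and (Theorem~\ref{thm:4.6}.b) that $\sum_k(A_k\cap W)$ is SA in $V$; as $W$ is itself SA in $V=\sum_kA_k$, this sum equals $W$. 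The rest then reduces to the single identity $\brC(S)\cap A_k=\brC(S_k)$ for all $k$: granting it, the compatibility hypotheses $\brC(S_i)\cap(A_j\cap W)\subseteq\brC(S_j)$ of Theorem~\ref{thm:4.5} hold (indeed $\brC(S_i)\cap A_j\subseteq A_i\cap A_j\cap\brC(S)\subseteq A_j\cap\brC(S)=\brC(S_j)$), so Theorems~\ref{thm:4.5} and~\ref{thm:4.6} applied to $(A_1\cap W,\dots,A_r\cap W)$ and $\brC(S_1),\dots,\brC(S_r)$ give both $\AM_V$ for $(\brC(S_1),\dots,\brC(S_r))$ and $\brC(S_1)+\cdots+\brC(S_r)\in\SA(V)$; and $\brC(S)=\brC(S_1)+\cdots+\brC(S_r)$, since each $\brC(S_k)\subseteq\brC(S)$ and, conversely, any $z\in\brC(S)\subseteq V=\sum_kA_k$ splits as $z=\sum_kz_k$ with $z_k\in A_k$, each $z_k$ lying in $\brC(S)$ by the SA-property, hence in $\brC(S)\cap A_k=\brC(S_k)$. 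With the isomorphism $\brC(S_1)\infty_V\cdots\infty_V\brC(S_r)\isoto\brC(S_1)+\cdots+\brC(S_r)$ furnished by $\AM_V$, this is the assertion.

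The step I expect to be the real obstacle is precisely the identity $\brC(S)\cap A_k=\brC(S_k)$. The inclusion $\supseteq$ is easy: each $s_k\in S_k$ sits inside $S$ after adding suitable elements of the other $S_j$, so $\brC(S_k)\subseteq\brC(S)\cap A_k$. For $\subseteq$ one is given $z\in A_k$ and $s=s_1+\cdots+s_r\in S$ with $z+s\leq_V s$, and must produce $t\in S_k$ — ideally $t=s_k$ — with $z+t\leq_V t$; this forces one to unwind the exchange equivalence that defines addition, and hence $\leq_V$, in $V=A_1\infty\cdots\infty A_r$: starting from a witness $z+s+c=s$, write $c=c_1+\cdots+c_r$ and track the induced chain of basic exchanges of elements of the $A_i\cap A_j$, using that $W$ is convex and $s_k\in S_k$ to normalize it so that $z$ stays in slot $k$. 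This is the exact analogue of the bookkeeping in the proof of Theorem~\ref{thm:3.1}, and I would aim to deduce it from that result (or from Corollary~\ref{cor:4.7}.c) rather than carry it out afresh.
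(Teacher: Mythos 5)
Your reduction to the single identity $\brC(S)\cap A_k=\brC(S_k)$ (with $\brC(S_k)$ read, as the statement writes it, as $\{z\in A_k:\exists t\in S_k,\ z+t\leq_V t\}$) is the right way to organize the proof, and you correctly flag it as the crux; but this identity is simply false, and no bookkeeping analogous to Theorem~\ref{thm:3.1} can rescue it, because the obstruction sits in the relation $\leq_V$ rather than in the exchange equivalence. Concretely, take $V_0=\N_0\cup\{\infty\}$ with ordinary addition and $\infty$ absorbing, $A_1=2\N_0\cup\{\infty\}$, $A_2=3\N_0\cup\{\infty\}$, so $A_1\cap A_2=6\N_0\cup\{\infty\}$; one checks directly that $(A_1,A_2)$ has $\AM_{V_0}$ (two representations of the same finite sum differ by a multiple of $6$ in each slot, i.e.\ by an element of $A_1\cap A_2$, and any tuple summing to $\infty$ reaches $(\infty,\infty)$ by one exchange of $d=\infty$), so $V=A_1+A_2=(\N_0\setminus\{1\})\cup\{\infty\}$. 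Take $S_1=\{0\}$ (or $\N\cdot 2$) and $S_2=A_2$, both closed under addition. Since $\infty\in S$, every $z\in V$ has $z+\infty\leq_V\infty$, so $\brC(S)=V$; but every $t\in S_1$ is finite and the finite part of $V$ is cancellative, so $\brC(S_1)=\{0\}$. Thus $\brC(S)\cap A_1=A_1\neq\{0\}=\brC(S_1)$. Worse, $\brC(S_1)+\brC(S_2)=\{0\}+A_2=A_2$, while $2\in\brC(S)=V$ and $2\notin A_2$, so the conclusion that $\brC(S)$ equals $\brC(S_1)\infty_V\brC(S_2)$ fails outright under this reading of $\brC(S_k)$.

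The root cause is that $\brC^V(\cdot)$ does not localize to the $A_k$: an element can lie in $\brC^V(s_1+\cdots+s_r)$ solely because of absorption created by summands $s_j$ with $j\neq k$, without lying in any $\brC^V(s_k)$. The paper supplies no argument here --- it invokes ``complete analogy'' with \eqref{eq:13.11}, which is in turn only attributed to Theorem~\ref{thm:4.6} without checking the hypotheses of Theorem~\ref{thm:4.5} (notably $W_i\cap A_j\subseteq W_j$), and in the example above that hypothesis fails. It is plausible that the intended reading is the ambient monoid $\brC^V(S_k)=\{z\in V:\exists t\in S_k,\ z+t\leq_V t\}$ rather than its trace in $A_k$ (then $\brC^V(S_2)=V$ and the example above is resolved), but with that reading $\brC^V(S_k)$ need not lie in $A_k$, your passage through Corollary~\ref{cor:4.7} and $\SA(A_k)$ is no longer available, and one is left having to prove $\brC^V(s_1+\cdots+s_r)\subseteq\brC^V(S_1)+\cdots+\brC^V(S_r)$ by some other route. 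So there is a genuine gap, precisely at the step you singled out.
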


\end{document}